\documentclass[11pt]{article}
\usepackage{amsmath, amssymb, amscd, amsthm, amsfonts,mathrsfs,mathtools}
\usepackage{graphicx}
\usepackage{hyperref}
\usepackage{enumitem}
\usepackage[dvipsnames]{xcolor}

\usepackage[title]{appendix}

\oddsidemargin 0pt
\evensidemargin 0pt
\marginparwidth 40pt
\marginparsep 10pt
\topmargin -20pt
\headsep 10pt
\textheight 8.7in
\textwidth 6.65in
\linespread{1.2}

\title{Subordinators on  Feller Topological Monoids}
\author{
	Ulises Pérez Cendejas\footnote{UPC's research is supported by
CONACyT PhD scholarship CVU 882692.
} \and  Gerardo Pérez Suárez\footnote{
GPS's research is supported by
CONACyT PhD scholarship CVU 859066.
}}
\date{}

\newtheorem{theorem}{Theorem}
\newtheorem{lemma}[theorem]{Lemma}
\newtheorem{definition}[theorem]{Definition}

\newtheorem{proposition}[theorem]{Proposition}
\newtheorem{example}[theorem]{Example}
\newtheorem{remark}[theorem]{Remark}
\newtheorem{corollary}[theorem]{Corollary}

\begin{document}

\maketitle

\begin{abstract}
We investigate a class of topological monoids with a suitable family of characters which we call Feller topological monoids. 
We extend the classical notion of subordinators to subordinators on Feller topological monoids.
Under suitable assumptions, we prove a Lévy-Khintchine type representation for such subordinators.
In addition, a Lévy-Itô like decomposition is obtained.
These formulae generalize the classical ones for subordinators.
 
\end{abstract}

\textit{Keywords:} Feller topological monoid; Subordinator; Lévy-Khintchine formula; Lévy-Itô decom-\indent position.

\textit{MSC 2020:} 60B15; 60E10; 60F17; 60G53; 60G55; 60E07.

\section{Introduction}\label{section-introduction}

A subordinator, as defined in \cite[Chapter 3]{MR1406564}, is a stochastic process $(X_t)_{t\geq0}$ taking values in $[0,\infty)$ with independent and identical distributed increments. Subordinators satisfy the so-called \textit{Lévy-Khintchine formula}
\begin{align}\label{eqn:ClassicalLevyKhint}
    \mathbb{E}\left[e^{-\lambda X_t}\right]=\exp\left\{-t\left(\texttt{d}+\int_{(0,+\infty)}\left(1-e^{-\lambda x}\right)\pi(\text{d}x) \right)\right\},\quad t,\lambda\geq0,
\end{align}
where the constant $\texttt{d}\geq0$ and $\pi$ is a measure on $(0,+\infty)$ such that $\int_{(0,+\infty)}\min\{1,x\}\pi(\text{d}x)<\infty$. Another important feature of subordinators is that they possess a \textit{Lévy-Itô decomposition}, usually written as
\begin{align*}
    X_t=\texttt{d} t+\int_{(0,t]}\int_{(0+\infty)}x\mathcal{N}(\text{d}t,\text{d}x),\quad t\geq0,
\end{align*}
where $\mathcal{N}$ is a Poisson point process on $(0,+\infty)\times(0,+\infty)$ with intensity $\text{d}t\otimes\pi$. Writting $\mathcal{N}$ as $\mathcal{N}=\sum_{i}\delta_{\left(t_i,x_i\right)}$, the Lévy-Itô decomposition can be written as \begin{align}\label{eqn:ClassicalLevyIto}
    X_t=\texttt{d} t+\sum_{t_i\leq t}x_i,\quad t\geq0.
\end{align}
From now on, for reasons that will become clear later on, we will refer to these subordinators as \textit{additive subordinators}.

In the literature, one can find several kinds of stochastic processes which possess a Lévy-Khintchine type formula and a Lévy-Itô like decomposition. For instance,  the extremal processes \cite[Sec. 4.3]{MR2364939} and super-extremal processes \cite{MR1303927}, the random interlacements model \cite{MR2680403}, the flow of squared Bessel processes of dimension zero \cite[p. 508]{MR1725357} and \cite[Sec. 4]{MR656509}. Some of these Lévy-Khintchine formulae and Lévy-Itô decomposition can be found in Table \ref{tab:LevyKhintandItoInLit}.
\begin{table}[h!]
\begin{tabular}{ ||c| c c||}
\hline
  & Lévy-Khintchine type formula & Lévy-Itô like decomposition \\ [0.5ex] 
 \hline \hline
  \texttt{(a)}  &$\mathbb{E}[e^{-\lambda X_t}]=\exp\{-t\int_0^\infty\left(1-e^{-\lambda x}\right)\pi(\text{d}x)\}$ & $X_t=\sum_{t_i\leq t}x_i$ \\[5pt]
 \texttt{(b)}  &  $\mathbb{E}[e^{-\int_0^\infty \lambda(u)Z_x(u)\text{d}u }]=\exp\{-x\int_{\mathbf{W}_+}\left(1-e^{-\int_0^\infty \lambda(u)\omega(u) \text{d}u}\right)\eta(\text{d}\omega)\}$ &$Z_{\cdot}(x)=\sum_{y_i\leq x}\eta_i(\cdot)$ \\
 \texttt{(c)} & $\mathbb{E}[\boldsymbol{1}_{\{Y_t\leq\lambda\}}]=\exp\left\{-t\int_0^\infty \left(1-\boldsymbol{1}_{\{y\leq\lambda\}}\right)\pi(\text{d}y)\right\}$ & $Y_t=\bigvee_{t_i\leq t}y_i$ \\[5pt]  
 \texttt{(d)}  & $\mathbb{E}[\boldsymbol{1}_{\{\mathscr{J}_u\cap K\neq\emptyset\}}]=\exp\{-u\operatorname{cap}(K)\}$ & $\mathscr{J}_t=\bigcup_{t_i\leq t}\operatorname{range}(w_i^*)$    \\
 \hline
 
\end{tabular}
\caption{\label{tab:LevyKhintandItoInLit}The process $(X_t)_{t\geq0}$ is an additive subordinator with $\texttt{d}=0$.
The process $(Z_{\cdot}(x))_{x\geq0}$ is the flow of squared Bessel processes of dimension zero.
The process $(Y_t)_{t\geq0}$ is an extremal process. And the process $(\mathscr{J}_t)_{t\geq0}$ is the random interlacements model.}
\end{table}

Our aim in this paper is to investigate Lévy-Khintchine type formulae and Lévy-It\^o like decompositions for subordinators taking values in topological monoids.
These spaces provide a framework where it is possible to naturally  define these stochastic processes.
The examples shown in Table \ref{tab:LevyKhintandItoInLit} strongly motivated the
present work.

In order to work with topological monoids, we will assume that they possess a family of characters that characterize some of their convergence properties. 
We will call such family a determining class. In this direction, in a more or less equivalent approach,
Jonasson \cite{MR1625865} obtained general Lévy-Khintchine formulae for infinitely divisible random elements taking values in locally compact convex cones. One of the main differences between Jonasson's approach and ours is that while Jonasson works with continuous characters taking values in $(0,1)$, we drop these two assumptions. Moreover, we do not need the convex cone structure for the state space of our subordinators. Instead,  we only need to have a monoid with the aforementioned determining class. We call this topological space a Feller topological monoid.

In some other directions, the classical pathwise notion of Lévy processes, as defined in \cite{MR1406564}, has been extended  to more general groups, for instance, to Lie groups (see \cite{MR2060091}) or to some Banach spaces (see \cite{MR2353272}). In both cases the concept of stochastic integral with respect to a random measure is  as well generalized. In the same vein, subordinators have been extended to some particular cones of Banach spaces (see \cite{MR2264365} or \cite{MR2245374}).

The paper is organized as follows. In Section \ref{sec:TopoFellMon}, we introduce the definition of a Feller topological monoid and prove some of its properties. Then we gather some results about the Laplace transform on a topological monoid (as in \cite{MR2098271}). Afterwards we define subordinators and their Laplace exponent, and we generalize some well-known results related to this exponent. In Section \ref{sec:FellSemOnTopFellMon}, we use Feller semigroups techniques, namely Yosida approximation, to prove the Lévy-Khintchine formula. Finally, in Section  \ref{sec:LevyItoDec}, we obtain a Lévy-It\^o like decomposition for a class of subordinators.

\section{Topological Feller Monoids}\label{sec:TopoFellMon}

In this section we introduce the notion  of Feller topological monoid.
We also prove some of its properties.
Examples of Feller topological monoids that appear in the literature are presented as well.

First, let us recall some definitions. An abelian topological monoid is a triplet $(\mathbb{M},\tau,\oplus)$ such that $(\mathbb{M},\tau)$ is a topological space, $(\mathbb{M},\oplus)$ is a commutative monoid, and the operation $\oplus\colon\mathbb{M}\times\mathbb{M}\to\mathbb{M}$ is  continuous. 
We denote the neutral element in $(\mathbb{M},\oplus)$ by $\boldsymbol{e}$.
We refer the reader to \cite[Section 1.5]{MR2743117}  and the references therein for a complete exposition on topological monoids.

Let $\partial_\infty$ be an element not in $\mathbb{M}$ and set $\mathbb{M}^{\ast}\coloneqq\mathbb{M}\cup\{\partial_\infty\}$.
We denote by $(\mathbb{M}^{\ast},\tau^{\ast})$ the Alexandroff extension of $(\mathbb{M},\tau)$.
We recall that a sequence $(x_n)_{n\geq 1}\subset\mathbb{M}^{\ast}$ converges to $\partial_\infty$  if for any compact subset $K$ of $(\mathbb{M},\tau)$ there exists $N\geq 1$ such that $x_n\notin K$ for all $n\geq N$. 
As usual, we denote this  by $x_n\to\partial_\infty$.

We say that a function $\chi\colon\mathbb{M}\to[0,1]$ is a character on $\mathbb{M}$ if 
\begin{align*}
\chi(\boldsymbol{e}) = 1\quad  \textrm{and} \quad \chi(x \oplus y) = \chi(x)\chi(y)\quad \textrm{for all}\quad x,y\in\mathbb{M}.
\end{align*}
Equivalently, a character on $\mathbb{M}$ is a monoid homomorphism from $(\mathbb{M},\oplus)$ to $([0,1],\cdot)$, where $\cdot$ denotes the usual product on $\mathbb{R}$.
Let $\mathcal{B}(\mathbb{M})$ be the Borel $\sigma$-algebra of $(\mathbb{M},\tau)$.
We denote by $\textrm{Ch}(\mathbb{M})$ the set of all $\mathcal{B}(\mathbb{M})$-measurable characters on $\mathbb{M}$. 
We say that a subset $\widetilde{\mathbb{M}}$ of $\textrm{Ch}(\mathbb{M})$ is closed under pointwise multiplication if $\chi_1\chi_2\in\widetilde{\mathbb{M}}$ for any $\chi_1,\chi_2\in\widetilde{\mathbb{M}}$, where $(\chi_1\chi_2)(x)\coloneqq \chi_1(x)\chi_2(x)$, $x\in\mathbb{M}$.
The set of all characters in $\textrm{Ch}(\mathbb{M})$ that tend to zero at $\partial_\infty$ will be denoted by $\textrm{Ch}_0(\mathbb{M})$.

We are now ready to formulate our notion of Feller topological monoids.

\begin{definition}[Feller Topological Monoid\footnote{Terminological remark: While the term ``semigroup with identity'' seems to be more popular in the probabilistic literature when referring to a ``monoid'', we rather adopt the term “monoid” to avoid equivocal uses of the nowadays extended concept of Feller semigroups.}]\label{FC} Let $\mathbb{M}\equiv (\mathbb{M},\tau,\otimes)$ be an abelian topological monoid. 
We say that a pair $(\mathbb{M},\widetilde{\mathbb{M}})$ is a Feller topological monoid if $(\mathbb{M},\tau)$ is a locally compact second countable Hausdorff space,
and $\widetilde{\mathbb{M}}$ is a non-empty, countable and closed under pointwise multiplication  subset of $\emph{Ch}(\mathbb{M})$ such that for any sequence $(x_n)_{n\geq 1}\subset\mathbb{M}$ the following conditions are satisfied:  
\begin{enumerate}
    \item[(i)] $\chi(x_n)\to0$ for all $\chi\in\widetilde{\mathbb{M}}$ if and only if $x_n\to\partial_\infty$; 
    \item[(ii)] if $\chi(x_n)\to \psi(\chi)$ for every $\chi\in\widetilde{\mathbb{M}}$ and $\psi\not\equiv0$, then
    there exists $x\in\mathbb{M}$ such that $x_n\to x$ and  $\psi(\chi)=\chi(x)$ for each $\chi\in\widetilde{\mathbb{M}}$.
\end{enumerate}
In this case, we say that $\widetilde{\mathbb{M}}$ is a determining class of characters on ${\mathbb{M}}$.
\end{definition}

We present next some examples of Feller topological monoids that appear in the literature.

\begin{example}\label{ex:exp}
Let $\mathbb{R}_+\coloneqq[0,\infty)$ endowed with the usual topology $\tau_{\mathbb{R}_+}$ and the usual  sum $+$. Then $(\mathbb{R}_+,\tau_{\mathbb{R}_+},+)$ is an abelian topological monoid. 
For each $\lambda\in\mathbb{Q}^{+}\coloneqq \mathbb{Q}\cap (0,\infty)$, let $e_\lambda\colon\mathbb{R}_+\rightarrow (0,1]$ be the exponential function given by $e_\lambda(t)\coloneqq e^{-\lambda t}$ for each $t\in\mathbb{R}_+$. 
Set $\emph{\textrm{Exp}}(\mathbb{R}_+)\coloneqq\{e_\lambda:\lambda\in\mathbb{Q}^{+}\}$.
Notice that $\emph{\textrm{Exp}}(\mathbb{R}_+)$ is closed under pointwise multiplication.
Let $(t_n)_{n\geq 1}\subset\mathbb{R}_+$ and fix $\lambda\in\mathbb{Q}^{+}$. 
Then $(e_\lambda(t_n))_{n\geq 1}$ converges to zero if and only if $t_n\rightarrow\infty$. On the other hand,  $(e_\lambda(t_n))_{n\geq 1}$ converges to an element in $(0,\infty)$ if and only if  $(t_n)_{n\geq 1}$ converges in $\mathbb{R}_+$. Thus, $(\mathbb{R}_+,\emph{\textrm{Exp}}(\mathbb{R}_+))$ is a Feller topological monoid.
\end{example}

\begin{example}\label{ex:ind}
For each $x,y\in\mathbb{R}_+$, we set $x\vee y\coloneqq\max\{x,y\}$. 
Then $(\mathbb{R}_+,\tau_{\mathbb{R}_+},\vee)$ is an abelian topological monoid. 
For each $\lambda\in\mathbb{Q}^{+}$, we denote by $\emph{\textbf{1}}_{[0,\lambda]}$ the indicator function of the interval $[0,\lambda]$.
Set $\emph{\textrm{Ind}}(\mathbb{R}_+)\coloneqq \{\emph{\textbf{1}}_{[0,\lambda]}: \lambda\in\mathbb{Q}^{+}\}$.
Notice that $\emph{\textrm{Ind}}(\mathbb{R}_+)$ is closed under pointwise multiplication.
Let $(t_n)_{n\geq 1}\subset\mathbb{R}_+$.
Observe that $\emph{\textbf{1}}_{[0,\lambda]}(t_n)\rightarrow 0$ for all $\lambda\in\mathbb{Q}^{+}$ if and only if $t_n\rightarrow \infty$.
On the other hand, if $(t_n)_{n\geq 1}$ is bounded and $\emph{\textbf{1}}_{[0,\lambda]}(t_n)$ converges  to $\psi(\lambda)$  for all $\lambda\in\mathbb{Q}^{+}$, then  $(t_n)_{n\geq 1}$ converges to some $t$ and $\psi(\lambda)=\emph{\textbf{1}}_{[0,\lambda]}(t)$.
Moreover, $t_n\leq t$ for all $n$ sufficiently large.
Thus, $(\mathbb{R}_+,\emph{\textrm{Ind}}(\mathbb{R}_+))$ is a Feller topological monoid.
\end{example}

\begin{example}\label{ex:hit} For any integer $d\geq1$, let  $\mathbb{Z}_*^d$ be the space of proper subsets of $\mathbb{Z}^d$, equipped with the discrete topology $\tau_{\mathbb{Z}_*^{d}}$ and with the binary operation of the union $\cup$. Then $(\mathbb{Z}_*^{d},\tau_{\mathbb{Z}_*^d},\cup)$ is an abelian topological monoid. Here the neutral element $\boldsymbol{e}$ and the point at infinity $\partial_\infty$ are the empty set $\emptyset$ and the set $\mathbb{Z}^d$ respectively. Recall that the discrete topology on $\mathbb{Z}_*^d$ coincides with the topology induced by the set-theoretic convergence, that is to say, a sequence $(J_n)_{n\in\mathbb{N}}$ in $\mathbb{Z}_*$  converges to $J$ if and only if $J=\limsup_{n\to\infty}{J_n}=\liminf_{n\to\infty}J_n$ where, as usual,  $\limsup_{n\to\infty}{J_n}:=\cap _{n\geq 1}\cup _{k\geq n}J_{k}$ and $\liminf _{n\to \infty }J_{n}:=\cup _{n\geq 1}\cap _{k\geq n}J_{k}$. For a subset $K$ of $\mathbb{Z}^d$, we write $K\subset\subset\mathbb{Z}^d$, if $K$ has finite and non-zero cardinality. For each $K\subset\subset\mathbb{Z}^d$, we define the hitting functional $\chi_{K}:\mathbb{Z}_*^d\to\{0,1\}$ by $\chi_{K}(J):=\boldsymbol{1}_{\{J\cap K\neq\emptyset\}}$. Set $\operatorname{Hit}(\mathbb{Z}^d)=\{\chi_K:{K\subset\subset\mathbb{Z}^d}\}$. Notice that $\operatorname{Hit}(\mathbb{Z}^d)$ is closed under pointwise multiplication. Let $(J_n)_{n\geq1}\subset\mathbb{Z}_*^d$. Observe that $\chi_K(J_n)\to0$ for all $K\subset\subset\mathbb{Z}^d$ if and only if $\lim_{n\to\infty}J_n=\mathbb{Z}^d$. On the other hand,  $\limsup_{n\to\infty}J_n\neq\mathbb{Z}^d$ and $\chi_K(J_n)$ converges to $\psi(K)$ for all $K\subset\subset\mathbb{Z}^d$ if and only if $\lim_{n\to\infty}J_n=J$ for some $J\in\mathbb{Z}_*^d$ and $\psi(K)=\chi_K(J)$. Thus, $(\mathbb{Z}_*^d,\operatorname{Hit}(\mathbb{Z}^d))$ is a Feller topological monoid.
\end{example}   

In the following lemma we prove some useful properties of Feller topological monoid.

\begin{lemma}\label{lem:property}
Let $(\mathbb{M},\tau,\oplus,\widetilde{\mathbb{M}})$ be a Feller topological monoid.  
The following statements hold: 
\begin{enumerate}
    \item[(i)] $\widetilde{\mathbb{M}}$ is  non-vanishing everywhere, i.e., for any $x\in\mathbb{M}$ there exists a character $\chi\in\widetilde{\mathbb{M}}$ such that $\chi(x)\neq0.$
    \item[(ii)] $\widetilde{\mathbb{M}}$ separates points, i.e., for any two distinct  elements $x,y\in\mathbb{M}$ there exists $\chi\in\widetilde{\mathbb{M}}$ such that $\chi(x)\neq\chi(y)$.
    \item[(iii)] If $(x_n)_{n\geq 1}\subset \mathbb{M}$ is a sequence that converges to $\partial_{\infty}$, then $x_n\oplus y\to\partial_{\infty}$ for all $y\in\mathbb{M}$.
\end{enumerate}
\end{lemma}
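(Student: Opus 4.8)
The plan is to derive all three statements directly from the two defining properties of Definition~\ref{FC} by feeding well-chosen sequences into them; no topology is needed beyond the facts that $(\mathbb{M},\tau)$ is Hausdorff and that singletons are compact. For (i), I would argue by contradiction: suppose some $x\in\mathbb{M}$ satisfies $\chi(x)=0$ for every $\chi\in\widetilde{\mathbb{M}}$, and consider the constant sequence $x_n\equiv x$. Then $\chi(x_n)=0\to 0$ for all $\chi\in\widetilde{\mathbb{M}}$, so condition (i) of Definition~\ref{FC} forces $x_n\to\partial_\infty$. This is impossible, since taking the compact set $K=\{x\}$ in the definition of convergence to $\partial_\infty$ would require $x=x_n\notin\{x\}$ for all large $n$. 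This contradiction proves (i).

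For (ii), suppose $x\neq y$ but $\chi(x)=\chi(y)$ for all $\chi\in\widetilde{\mathbb{M}}$. The key idea is to test condition (ii) of Definition~\ref{FC} against the alternating sequence $x,y,x,y,\dots$. By the standing assumption $\chi(x)=\chi(y)$, every term satisfies $\chi(x_n)=\chi(x)$, so $\chi(x_n)\to\psi(\chi):=\chi(x)$ for each $\chi$, and $\psi\not\equiv 0$ by part (i). Condition (ii) then yields some $z\in\mathbb{M}$ with $x_n\to z$. But an alternating sequence between two distinct points of a Hausdorff space cannot converge, as its even and odd subsequences have the distinct limits $y$ and $x$. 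This contradiction gives (ii).

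For (iii), I would again reduce to condition (i) via the multiplicativity of characters. If $x_n\to\partial_\infty$, then $\chi(x_n)\to 0$ for every $\chi\in\widetilde{\mathbb{M}}$. Fixing $y\in\mathbb{M}$ and using $\chi(x_n\oplus y)=\chi(x_n)\chi(y)$ together with $\chi(y)\in[0,1]$, we obtain $\chi(x_n\oplus y)\to 0$ for all $\chi$; applying condition (i) in the reverse direction to the sequence $(x_n\oplus y)_{n\geq 1}$ delivers $x_n\oplus y\to\partial_\infty$.

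The arguments are short, so the only genuine subtlety—and the main thing to get right—is the choice of test sequences: the constant sequence for (i) and the alternating sequence for (ii), the latter relying on the Hausdorff property to rule out convergence. Part (iii) is then essentially immediate, since multiplicativity turns the character-based characterization of $\partial_\infty$ into a property stable under the monoid operation.
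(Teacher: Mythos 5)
Your proposal is correct and follows essentially the same route as the paper: the constant sequence plus Definition~\ref{FC}(i) for part (i), the alternating sequence plus Definition~\ref{FC}(ii) and part (i) for part (ii), and multiplicativity of characters plus Definition~\ref{FC}(i) for part (iii). The only cosmetic difference is that in (i) you invoke compactness of the singleton $\{x\}$ where the paper cites local compactness; both are valid.
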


\begin{proof}
Since $\mathbb{M}$ is locally compact, part (i) follows directly from Definition \ref{FC} (i). 
Let us now proceed to prove part (ii).
Let $x,y\in\mathbb{M}$, and suppose that $\chi(x)=\chi(y)$ for all $\chi\in\mathbb{M}$. Let $(z_n)_{n\geq 1}\subset\mathbb{M}$ be the sequence given by
\begin{align*}
    z_n\coloneqq 
    \begin{cases}
    x & \textrm{if } n \textrm{ is even},\\
    y & \textrm{if } n \textrm{ is odd}.
    \end{cases}
\end{align*}
Notice that $(\chi(z_n))_{n\geq 1}$ converges to $\chi(x)$ for all $\chi\in\widetilde{\mathbb{M}}$. 
On the other hand, from part (i) we know that there exists $\tilde{\chi}\in\widetilde{\mathbb{M}}$ such that $\tilde{\chi}(x)\neq 0$. 
We deduce from Definition \ref{FC} (ii) that $(z_n)_{n\geq 1}$ converges to some element in $\mathbb{M}$. Hence, $x=y$, i.e., $\widetilde{\mathbb{M}}$ separates points. 

It only remains to prove part (iii). Let $(x_n)_{n\geq 1}\subset\mathbb{M}$. If $x_n\to \partial_\infty$, then $\chi(x_n\oplus y)=\chi(x_n)\chi(y)\to 0$ for all $\chi\in\widetilde{\mathbb{M}}$. 
It follows from Definition \ref{FC} (i) that $x_n\oplus y\to \partial_\infty$. This concludes the proof.
\end{proof}

Our next task is to construct elements of the form $\bigoplus_{n\in\mathbb{N}}x_n$ for any sequence $(x_n)_{n\geq 1}$ in a Feller topological monoid $(\mathbb{M},\tau,\oplus,\widetilde{\mathbb{M}})$.

\begin{theorem}\label{proFellercone}
Let $(\mathbb{M},\tau,\oplus,\widetilde{\mathbb{M}})$ be a Feller topological monoid.
The following statements hold: 
\begin{enumerate}
    \item[(i)] (Divergence Criterion) If $(x_n)_{n\geq 1}\subset \mathbb{M}$ is a sequence such that $\prod_{n\geq 1}\chi(x_n)=0$ for all characters $\chi\in\widetilde{\mathbb{M}}$, then $\bigoplus_{i=1}^n x_i\to\partial_{\infty}$, as $n\to\infty$.
    \item[(ii)] (Convergence Criterion) If $(x_n)_{n\geq 1}\subset \mathbb{M}$ is a sequence such that $\prod_{n\geq 1}\chi(x_n)>0$ for some character $\chi\in\widetilde{\mathbb{M}}$, then there exists an element in $\mathbb{M}$ denoted by $\bigoplus_{i=1}^\infty x_i$
    such that \begin{align}
        \lim_{n\to\infty}\bigoplus_{i=1}^n x_i=\bigoplus_{i=1}^\infty x_i.
    \end{align}
    Furthermore, the definition of $\bigoplus_{i=1}^\infty x_i$ is independent of the  order of the $\oplus$-addends, so we can unambiguously write $\bigoplus_{n\in\mathbb{N}}x_n$.
    \item[(iii)] For any sequence $(x_n)_{n\geq1}\subset\mathbb{M}$,\begin{align*}
      \chi\left(\bigoplus_{i=1}^\infty x_i\right)=\prod_{i=1}^\infty\chi(x_i),\quad \chi\in\widetilde{\mathbb{M}},
    \end{align*}  
    where we set $\chi(\partial_\infty)=0$.
    \item[(iv)] For any partition $\{\mathcal{P},\mathcal{Q}\}$ of $\mathbb{N}$, we have $\left(\bigoplus_{n\in\mathcal{Q}}x_n\right)\oplus\left(\bigoplus_{n\in\mathcal{P}}x_n\right)=\left(\bigoplus_{n\in\mathbb{N}}x_n\right)$.
\end{enumerate}
\end{theorem}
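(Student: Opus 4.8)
The plan is to reduce everything to the single identity $\chi\!\left(\bigoplus_{i=1}^n x_i\right)=\prod_{i=1}^n\chi(x_i)$, valid for every $\chi\in\widetilde{\mathbb{M}}$ because $\chi$ is a character. Writing $S_n:=\bigoplus_{i=1}^n x_i$, each partial product $\prod_{i=1}^n\chi(x_i)$ is non-increasing in $n$ and bounded below by $0$, hence converges to $\prod_{i=1}^\infty\chi(x_i)\in[0,1]$. Part (i) is then immediate: if $\prod_{i=1}^\infty\chi(x_i)=0$ for every $\chi\in\widetilde{\mathbb{M}}$, then $\chi(S_n)\to 0$ for every such $\chi$, and Definition~\ref{FC}(i) forces $S_n\to\partial_\infty$.

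For part (ii) I set $\psi(\chi):=\prod_{i=1}^\infty\chi(x_i)$, so that $\chi(S_n)\to\psi(\chi)$ for every $\chi\in\widetilde{\mathbb{M}}$. The hypothesis supplies some $\chi_0$ with $\psi(\chi_0)>0$, whence $\psi\not\equiv 0$, and Definition~\ref{FC}(ii) applied to the sequence $(S_n)_{n\geq 1}$ yields $x\in\mathbb{M}$ with $S_n\to x$ and $\chi(x)=\psi(\chi)$ for all $\chi$; this $x$ is the desired $\bigoplus_{i=1}^\infty x_i$, unique by the separation property of Lemma~\ref{lem:property}(ii). Part (iii) then reads off directly from $\chi(x)=\psi(\chi)=\prod_{i=1}^\infty\chi(x_i)$ in the convergent case, while in the divergent case of part (i) both sides vanish under the convention $\chi(\partial_\infty)=0$.

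Order-independence and the partition identity reduce to elementary facts about non-negative series. Applying $-\log(\cdot)$ turns $\prod_i\chi(x_i)$ into the series $\sum_i\bigl(-\log\chi(x_i)\bigr)$ with terms in $[0,+\infty]$, and such a series is invariant under rearrangement and satisfies $\sum_{n\in\mathbb{N}}=\sum_{n\in\mathcal{P}}+\sum_{n\in\mathcal{Q}}$ for any partition $\{\mathcal{P},\mathcal{Q}\}$ of $\mathbb{N}$. Exponentiating gives $\prod_i\chi(x_{\sigma(i)})=\prod_i\chi(x_i)$ for every bijection $\sigma$, together with the splitting $\prod_{n\in\mathbb{N}}\chi(x_n)=\bigl(\prod_{n\in\mathcal{P}}\chi(x_n)\bigr)\bigl(\prod_{n\in\mathcal{Q}}\chi(x_n)\bigr)$. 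For the rearrangement claim I feed the permuted partial sums $\bigoplus_{i=1}^n x_{\sigma(i)}$ into Definition~\ref{FC}(ii) exactly as above, obtaining a limit $x'$ with $\chi(x')=\psi(\chi)=\chi(x)$ for all $\chi$, so that $x'=x$ by Lemma~\ref{lem:property}(ii).

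For part (iv) I compute, for every $\chi\in\widetilde{\mathbb{M}}$, that $\chi\bigl((\bigoplus_{n\in\mathcal{Q}}x_n)\oplus(\bigoplus_{n\in\mathcal{P}}x_n)\bigr)=\chi(\bigoplus_{n\in\mathcal{Q}}x_n)\,\chi(\bigoplus_{n\in\mathcal{P}}x_n)$ and invoke (iii) together with the product-splitting identity to identify this with $\prod_{n\in\mathbb{N}}\chi(x_n)=\chi(\bigoplus_{n\in\mathbb{N}}x_n)$; separation of points then gives equality. The step I expect to require the most care is the bookkeeping when $\bigoplus_{n\in\mathbb{N}}x_n=\partial_\infty$: here one sub-sum may lie in $\mathbb{M}$ while the other is $\partial_\infty$, so I must fix the absorbing extension $\partial_\infty\oplus y:=\partial_\infty$ (consistent with Lemma~\ref{lem:property}(iii)) and use the non-vanishing property of Lemma~\ref{lem:property}(i) to exclude the spurious case in which both sub-sums would lie in $\mathbb{M}$ yet have every character equal to zero.
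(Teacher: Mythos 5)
Your proof is correct and follows essentially the same route as the paper's: reduce each claim to the character identity $\chi\bigl(\bigoplus_{i=1}^n x_i\bigr)=\prod_{i=1}^n\chi(x_i)$, invoke Definition~\ref{FC}(i)--(ii) for the divergence/convergence dichotomy, use rearrangement invariance of monotone products (which you justify via $-\log$ where the paper simply asserts it), and conclude with separation of points. Your explicit treatment of the edge case $\bigoplus_{n\in\mathbb{N}}x_n=\partial_\infty$ in part (iv) is in fact slightly more careful than the paper's argument, which tacitly applies Lemma~\ref{lem:property}(ii) only to elements of $\mathbb{M}$.
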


\begin{proof}
Observe that the limit
$$
\prod_{n\geq 1}\chi(x_n)\coloneqq\lim_{m\rightarrow\infty}\prod_{n= 1}^{m}\chi(x_n)
$$
exists because the sequence $\big(\prod_{n= 1}^{m}\chi(x_n)\big)_{m\geq 1}$ is monotone decreasing for any $\chi\in\widetilde{\mathbb{M}}$.
Moreover, if $(\tilde{x}_{n})_{n\geq 1}$ is a reordering of $(x_{n})_{n\geq 1}$, then 
\begin{align}\label{reordering}
\prod_{n\geq 1}\chi(\tilde{x}_n)=\prod_{n\geq 1}\chi(x_n)\quad\textrm{for any}\quad \chi\in\widetilde{\mathbb{M}}.
\end{align}
For each $\chi\in\widetilde{\mathbb{M}}$, let 
$$\psi(\chi)\coloneqq \lim_{n\to\infty}\chi\left(\bigoplus_{i=1}^{n}x_i\right)=\lim_{m\rightarrow\infty}\prod_{n= 1}^{m}\chi(x_n).
$$
If $\psi(\chi)=0$ for all $\chi\in\widetilde{\mathbb{M}}$,
then it follows from
Definition \ref{FC} (i) that $\bigoplus_{i=1}^{n}x_i\to \partial_\infty$. 
This proves the divergence criterion in (i). 
On the other hand, if
$\psi\not\equiv 0$, 
then we deduce from Definition \ref{FC} (ii) that  $\left(\bigoplus_{i=1}^{n}x_i\right)_{n\geq 1}$ converges to some element in $\mathbb{M}$. We denote such element by $\bigoplus_{i=1}^{\infty}x_i$. 
Let $(\tilde{x}_{n})_{n\geq 1}$ be a reordering of $(x_{n})_{n\geq 1}$.
By using (\ref{reordering}) we obtain  that $\left(\bigoplus_{i=1}^{n}\tilde{x}_i\right)_{n\geq 1}$ converges to the element $\bigoplus_{i=1}^{\infty}\tilde{x}_i$. Moreover, from  Definition \ref{FC} (ii) and (\ref{reordering}) we get that $$\chi\left(\bigoplus_{i=1}^{\infty}x_i\right)=\chi\left(\bigoplus_{i=1}^{\infty}\tilde{x}_i\right)\quad\textrm{for all}\quad\chi\in\widetilde{\mathbb{M}}.$$
Hence, from Lemma \ref{lem:property} (ii) we conclude that $\bigoplus_{i=1}^{\infty}x_i=\bigoplus_{i=1}^{\infty}\tilde{x}_i$.
This proves the convergence criterion in (ii). Part (iii) follows readily from parts (i) and (ii).

Let $\mathcal{P}\subset \mathbb{N}$, and  let $(x_n)_{n\in\mathcal{P}}\subset\mathbb{M}$.
From the preceding parts, we know that 
$\bigoplus_{n=1}^{|\mathcal{P}|}\tilde{x}_n=\bigoplus_{n=1}^{|\mathcal{P}|}\hat{x}_n$ for any orderings $(\tilde{x}_n)_{n=1}^{|\mathcal{P}|}$ and $(\hat{x}_n)_{n=1}^{|\mathcal{P}|}$ of $(x_n)_{n\in\mathcal{P}}$.
Thus, we can define unambiguously the element $\bigoplus_{n\in\mathcal{P}}x_n\coloneqq\bigoplus_{n=1}^{|\mathcal{P}|}\tilde{x}_n$. 
Suppose now that $\{\mathcal{P},\mathcal{Q}\}$ is a partition of $\mathbb{N}$, and let $(x_n)_{n\geq 1}\subset\mathbb{M}$.
Let $(\tilde{x}_n)_{n=1}^{|\mathcal{P}|}$ and $(\hat{x}_n)_{n=1}^{|\mathcal{Q}|}$ be orderings of  $(x_n)_{n\in\mathcal{P}}$ and $(x_n)_{n\in\mathcal{Q}}$, respectively. 
By using Definition \ref{FC} and (\ref{reordering}) we obtain
\begin{align*}
    \chi\left(\left(\bigoplus_{n\in\mathcal{P}}x_n\right)\oplus\left(\bigoplus_{n\in\mathcal{Q}}x_n\right)\right)
    &=\chi\left(\bigoplus_{n\in\mathcal{P}}x_n\right)\chi\left(\bigoplus_{n\in\mathcal{Q}}x_n\right)\\
    &=\left(\prod_{n=1}^{|\mathcal{P}|}\chi(\tilde{x}_n)\right)\left(\prod_{n=1}^{|\mathcal{Q}|}\chi(\hat{x}_n)\right)\\
    &=\prod_{n=1}^{\infty}\chi(x_n)\\
    &= \chi\left(\bigoplus_{n\in\mathbb{N}}x_n\right)
\end{align*}
for all $\chi\in\widetilde{\mathbb{M}}$.
Therefore, it follows from Lemma \ref{lem:property} (ii) that $\left(\bigoplus_{n\in\mathcal{Q}}x_n\right)\oplus\left(\bigoplus_{n\in\mathcal{P}}x_n\right)=\left(\bigoplus_{n\in\mathbb{N}}x_n\right)$. This proves part (iv).

\end{proof}

Let $(\mathbb{M},\widetilde{\mathbb{M}})$ be a Feller topological monoid. We can define a natural metric $\rho_{\widetilde{\mathbb{M}}}$ on $\mathbb{M}$ given by 
\begin{align}\label{eqn:rhometric}
    \rho_{\widetilde{\mathbb{M}}}(x,y)=\sum_{n\geq1}\frac{|\chi_n(x)-\chi_n(y)|}{2^n},\quad x,y\in\mathbb{M},
\end{align}
where $(\chi_n)_{n\geq1}$ is some ordering of $\widetilde{\mathbb{M}}$.

Denote as $\tau_{\widetilde{\mathbb{M}}}$ the topology on $\mathbb{M}$ induced by the metric $\rho_{\widetilde{\mathbb{M}}}$, and $\widetilde{C}(\mathbb{M})$ the space of continuous functions from $\mathbb{M}$ into $\mathbb{R}$ with respect to the topology $\tau_{\widetilde{\mathbb{M}}}$. Setting $\chi_n(\partial_{\infty})=0$ for $n\geq1$, the formula (\ref{eqn:rhometric}) makes sense for any $x,y\in\mathbb{M}
^*$ and define a metric for $\mathbb{M}
^*$. By a slight abuse of
notation we also denote this metric as $\rho_{\widetilde{\mathbb{M}}}$. Let ${\mathcal{B}}(\tau_\mathbb{M})$ be the Borel $\sigma$-algebra of $(\mathbb{M},\tau_{\widetilde{\mathbb{M}}})$.

\begin{proposition}\label{pro:SigMet} The followings statements hold: 
\begin{enumerate}
    \item[(i)] If $\sigma\big(\widetilde{\mathbb{M}}\big)$ is the $\sigma$-algebra generated by the characters in $\widetilde{\mathbb{M}}$, then \begin{align*}
        \mathcal{B}(\mathbb{M})=\sigma(\widetilde{\mathbb{M}})=\mathcal{B}(\tau_{\widetilde{\mathbb{M}}}).
    \end{align*}    
    \item[(ii)] $(\mathbb{M},\rho_{\widetilde{\mathbb{M}}})$ is a  topological monoid, and $(\mathbb{M}^*,\rho_{\widetilde{\mathbb{M}}})$ is a compact space.
\end{enumerate}
\end{proposition}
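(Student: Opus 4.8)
The plan is to base everything on one elementary observation. Since every character takes values in $[0,1]$, each summand in $\rho_{\widetilde{\mathbb{M}}}$ is bounded by $1$, so by dominated convergence for series, $\rho_{\widetilde{\mathbb{M}}}(x_k,x)\to 0$ holds if and only if $\chi_n(x_k)\to\chi_n(x)$ for every $n$; that is, $\tau_{\widetilde{\mathbb{M}}}$-convergence is exactly pointwise convergence of all characters. The same bound shows each $\chi_n$ is $2^n$-Lipschitz, hence $\tau_{\widetilde{\mathbb{M}}}$-continuous. I will also use that $x\mapsto(\chi_n(x))_{n\geq1}$ is, by Lemma~\ref{lem:property}(ii), an injection into $[0,1]^{\mathbb{N}}$ carrying $\rho_{\widetilde{\mathbb{M}}}$ to the product metric $\sum_n 2^{-n}|a_n-b_n|$, so that $(\mathbb{M},\tau_{\widetilde{\mathbb{M}}})$, being a metric subspace of a separable space, is separable and hence second countable.

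For part (i) I would prove the equality through a chain of inclusions. First, $\sigma(\widetilde{\mathbb{M}})\subseteq\mathcal{B}(\tau_{\widetilde{\mathbb{M}}})$ is immediate from the Lipschitz continuity of the $\chi_n$. For the reverse, each open ball $\{y:\rho_{\widetilde{\mathbb{M}}}(x_0,y)<r\}$ lies in $\sigma(\widetilde{\mathbb{M}})$, because $y\mapsto\rho_{\widetilde{\mathbb{M}}}(x_0,y)$ is a countable sum of $\sigma(\widetilde{\mathbb{M}})$-measurable functions; by separability every $\tau_{\widetilde{\mathbb{M}}}$-open set is a countable union of such balls, giving $\mathcal{B}(\tau_{\widetilde{\mathbb{M}}})\subseteq\sigma(\widetilde{\mathbb{M}})$ and hence $\sigma(\widetilde{\mathbb{M}})=\mathcal{B}(\tau_{\widetilde{\mathbb{M}}})$. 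Since every $\chi\in\widetilde{\mathbb{M}}$ is $\mathcal{B}(\mathbb{M})$-measurable by hypothesis, $\sigma(\widetilde{\mathbb{M}})\subseteq\mathcal{B}(\mathbb{M})$. The only remaining inclusion is $\mathcal{B}(\mathbb{M})\subseteq\mathcal{B}(\tau_{\widetilde{\mathbb{M}}})$, which I obtain by showing $\tau\subseteq\tau_{\widetilde{\mathbb{M}}}$: if $x_k\to x$ in $\tau_{\widetilde{\mathbb{M}}}$ with $x\in\mathbb{M}$, then $\chi(x_k)\to\chi(x)$ for all $\chi$, and as $\chi(x)\neq0$ for some $\chi$ by Lemma~\ref{lem:property}(i), the limiting functional is not identically zero, so Definition~\ref{FC}(ii) with Lemma~\ref{lem:property}(ii) forces $x_k\to x$ in $\tau$. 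Thus the identity $(\mathbb{M},\tau_{\widetilde{\mathbb{M}}})\to(\mathbb{M},\tau)$ is sequentially continuous, hence continuous (its domain being metric), so $\tau\subseteq\tau_{\widetilde{\mathbb{M}}}$ and $\mathcal{B}(\mathbb{M})\subseteq\mathcal{B}(\tau_{\widetilde{\mathbb{M}}})$. Combining yields $\mathcal{B}(\mathbb{M})=\sigma(\widetilde{\mathbb{M}})=\mathcal{B}(\tau_{\widetilde{\mathbb{M}}})$.

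For part (ii), continuity of $\oplus$ follows from the convergence characterization: if $x_k\to x$ and $y_k\to y$ in $\rho_{\widetilde{\mathbb{M}}}$, then $\chi_n(x_k\oplus y_k)=\chi_n(x_k)\chi_n(y_k)\to\chi_n(x)\chi_n(y)=\chi_n(x\oplus y)$ for each $n$, so $x_k\oplus y_k\to x\oplus y$; as $\mathbb{M}\times\mathbb{M}$ is metrizable, sequential continuity gives joint continuity. For compactness of $(\mathbb{M}^{\ast},\rho_{\widetilde{\mathbb{M}}})$ it suffices, the space being metric, to verify sequential compactness. Given $(x_k)\subseteq\mathbb{M}^{\ast}$, a diagonal extraction over the countable family $\widetilde{\mathbb{M}}$ produces a subsequence with $\chi_n(x_{k_j})\to\psi(\chi_n)\in[0,1]$ for every $n$. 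If $\psi\equiv0$, then $\rho_{\widetilde{\mathbb{M}}}(x_{k_j},\partial_\infty)=\sum_n 2^{-n}\chi_n(x_{k_j})\to0$, so $x_{k_j}\to\partial_\infty$. If $\psi\not\equiv0$, choose $m$ with $\psi(\chi_m)>0$; then $\chi_m(x_{k_j})>0$ eventually, so only finitely many terms equal $\partial_\infty$, and Definition~\ref{FC}(ii) applied to the tail gives $x\in\mathbb{M}$ with $\chi_n(x)=\psi(\chi_n)$, whence $x_{k_j}\to x$ in $\rho_{\widetilde{\mathbb{M}}}$. In either case the subsequence converges, so $\mathbb{M}^{\ast}$ is compact.

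I expect the compactness statement to be the main obstacle, and it is precisely where Definition~\ref{FC} carries the weight. The delicate points are to confirm that a diagonal-limit functional $\psi\not\equiv0$ is genuinely \emph{realized} by a point of $\mathbb{M}$ — the substantive content of condition (ii) — and to dispose of the finitely many subsequence terms equal to $\partial_\infty$ before invoking that condition, which speaks only about sequences in $\mathbb{M}$. Everything else reduces to the convergence characterization and the measurability of the characters, and should be routine.
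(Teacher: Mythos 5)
Your proof is correct. For part (ii) it is essentially the paper's argument — sequential continuity of $\oplus$ via the pointwise-convergence characterization of $\rho_{\widetilde{\mathbb{M}}}$, plus a Cantor diagonal extraction for compactness — though you are more careful than the paper in two spots: you handle the case $\psi\equiv0$ by converging to $\partial_\infty$ directly in the metric, and you discard the finitely many terms equal to $\partial_\infty$ before invoking Definition \ref{FC}(ii), which indeed only speaks of sequences in $\mathbb{M}$. For part (i), however, you take a genuinely different route on the one nontrivial inclusion $\mathcal{B}(\mathbb{M})\subseteq\sigma(\widetilde{\mathbb{M}})$. The paper obtains it by citing Lemma 3 of \cite{MR2673979}: since $(\mathbb{M},\tau)$ is Polish and $\widetilde{\mathbb{M}}$ is a countable separating family of Borel functions, the Borel $\sigma$-algebras of $(\mathbb{M},\tau)$ and $(\mathbb{M},\rho_{\widetilde{\mathbb{M}}})$ coincide — an argument resting on descriptive-set-theoretic facts about Borel injections of Polish spaces. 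You instead extract the stronger topological statement $\tau\subseteq\tau_{\widetilde{\mathbb{M}}}$ directly from Definition \ref{FC}(ii) together with Lemma \ref{lem:property}(i)--(ii): $\rho_{\widetilde{\mathbb{M}}}$-convergence to a point of $\mathbb{M}$ forces $\tau$-convergence to that same point, and sequential continuity of the identity map out of a metric space upgrades to continuity. This is self-contained and more elementary, avoids the external lemma entirely, and yields as a by-product the (true, and not obvious — compare Example \ref{ex:ind}) fact that the character metric refines the original topology; the price is that it uses the full strength of the determining-class axioms rather than only separation of points, so it would not generalize to an arbitrary countable separating family of measurable characters the way the paper's citation does.
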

\begin{proof}
We first proceed to prove (i). 
Notice that $\mathcal{B}(\tau_{\widetilde{\mathbb{M}}})\subset\sigma(\widetilde{\mathbb{M}})\subset\mathcal{B}(\mathbb{M})$.
On the other hand, since $(\mathbb{M},\tau)$ is in particular a Polish space and $\widetilde{\mathbb{M}}$ separates points, we can apply Lemma 3 in \cite{MR2673979} to obtain that the Borel $\sigma$-algebras of $(\mathbb{M}, \tau)$ and $(\mathbb{M},\rho_{\widetilde{\mathbb{M}}})$ coincide.
Hence, part (i) follows.

It only remains to prove the statement in (ii). Suppose $x_m\to x$, $y_m\to y$, as $m\to\infty$ , with respect to the metric $\rho_{\widetilde{\mathbb{M}}}$, this means that $\chi_n(x_m)\to\chi_n(x),\;\chi_n(y_m)\to\chi_n(y)$, as $m\to\infty$, for $n\geq1$. Then $\chi_n(x_m\oplus y_m)\to\chi(x\oplus y)$, as $m\to\infty$, for $n\geq1$, that is $x_m\oplus y_m\to x\oplus y$, as $m\to\infty$, with respect to the metric $\rho_{\widetilde{\mathbb{M}}}$, so that $(\mathbb{M},\rho_{\widetilde{\mathbb{M}}})$ is a topological monoid. 

Let us observe that $(\mathbb{M}^*,\rho_{\widetilde{\mathbb{M}}})$ is a compact space. Let $(x_m)_{m\in\mathbb{N}}$ be a sequence in $\mathbb{M}\cup\partial_\infty$ and consider the array $(\chi_n(x_m))_{n\geq1,m\geq1}$. By Cantor's diagonal method there exists an increasing sequence $(m(k))_{k\geq1}$ of integers such that $\lim_{k\to\infty} \chi_n(x_{m(k)})$ exists for $n\geq1$, by the definition of a Feller topological monoid, this means that there exists $x\in\mathbb{M}^*$ such that \begin{align*}
    \lim_{k\to\infty} \chi_n(x_{m(k)})=\chi_n(x),\quad n\geq1.
\end{align*}
\end{proof}

We now focus on a special class of Feller topological monoids. We say that a Feller topological monoid $(\mathbb{M},\tau,\oplus,\widetilde{\mathbb{M}})$ is idempotent if $(\mathbb{M},\oplus)$ is idempotent, i.e., if $x\oplus x=x$ for all $x\in\mathbb{M}$. Notice that the Feller topological moinoids $(\mathbb{R}_+,\operatorname{Ind}(\mathbb{R}_+))$ and $(\mathbb{Z}_+^d,\operatorname{Hit}(\mathbb{Z}^d))$ from Examples \ref{ex:ind} and \ref{ex:hit}, respectively, are idempotent.

\begin{lemma}\label{IdemCones} Let $(\mathbb{M},\widetilde{\mathbb{M}})$ be a Feller topological monoid.  The following is true:
\begin{enumerate} 
    \item[a)] $\mathbb{M}$ is  idempotent if and only if every character $\chi$ in $ \widetilde{\mathbb{M}}$ takes values on $\{0,1\}$.
    \item[b)] If $\mathbb{M}$ is idempotent, then the characters in $\widetilde{\mathbb{M}}$ are upper semicontinuous.
    \item[c)] If there exist $x\in\mathbb{M}\backslash \{\boldsymbol{e}\}$ and $\chi\in\widetilde{\mathbb{M}}$ satisfying $\chi(x)=1$, then $\mathbb{M}$ is not idempotent.
\end{enumerate}
\end{lemma}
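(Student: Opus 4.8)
The plan is to prove the three parts in order, with part (a) providing the key characterization that drives the rest.

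\emph{Part (a).} This is an equivalence. For the forward direction, assume $\mathbb{M}$ is idempotent. Fix $\chi\in\widetilde{\mathbb{M}}$ and $x\in\mathbb{M}$. Since $x\oplus x=x$, the homomorphism property gives $\chi(x)=\chi(x\oplus x)=\chi(x)^2$, so $\chi(x)$ is a real number satisfying $t=t^2$, forcing $\chi(x)\in\{0,1\}$. For the converse, assume every $\chi\in\widetilde{\mathbb{M}}$ takes values in $\{0,1\}$. Fix $x\in\mathbb{M}$. For each $\chi\in\widetilde{\mathbb{M}}$ we have $\chi(x\oplus x)=\chi(x)^2=\chi(x)$, using that squaring fixes $0$ and $1$. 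Thus $x\oplus x$ and $x$ agree under every character in $\widetilde{\mathbb{M}}$, and Lemma \ref{lem:property} (ii) (separation of points) yields $x\oplus x=x$. Since $x$ was arbitrary, $\mathbb{M}$ is idempotent.

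\emph{Part (b).} Assume $\mathbb{M}$ is idempotent, so by part (a) each $\chi\in\widetilde{\mathbb{M}}$ is $\{0,1\}$-valued. To show $\chi$ is upper semicontinuous on the locally compact second countable Hausdorff space $\mathbb{M}$, it suffices (by metrizability via $\rho_{\widetilde{\mathbb{M}}}$, Proposition \ref{pro:SigMet}) to verify the sequential condition $\limsup_n \chi(x_n)\le \chi(x)$ whenever $x_n\to x$. The only way this can fail for a $\{0,1\}$-valued function is if $\chi(x)=0$ while $\chi(x_n)=1$ along a subsequence. I would rule this out: passing to that subsequence, $\chi(x_n)=1$ does not tend to $0$, and since $x_n\to x\in\mathbb{M}$ we have $\chi(x_n)\to\chi(x)$ by the convergence structure encoded in Definition \ref{FC} (ii), which forces $\chi(x)=1$, a contradiction. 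Hence $\{\chi=1\}$ is closed (equivalently $\{\chi<1\}=\{\chi=0\}$ is open), which is exactly upper semicontinuity.

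\emph{Part (c).} This is the contrapositive of one direction of (a), sharpened. Suppose there exist $x\in\mathbb{M}\setminus\{\boldsymbol{e}\}$ and $\chi\in\widetilde{\mathbb{M}}$ with $\chi(x)=1$. I argue by contradiction: if $\mathbb{M}$ were idempotent, part (a) tells us nothing is yet violated, so the content is to exhibit a non-idempotent witness directly from the hypothesis. The cleaner route is to show that idempotency would force $x=\boldsymbol{e}$. Consider the element $x$ and the sequence of iterated sums $x, x\oplus x, x\oplus x\oplus x,\dots$; under any $\psi\in\widetilde{\mathbb{M}}$ these have values $\psi(x), \psi(x)^2,\dots$. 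If $\mathbb{M}$ is idempotent then every $\psi$ is $\{0,1\}$-valued by (a), and I would show that the subset of characters with $\psi(x)=1$ together with the normalization $\psi(\boldsymbol{e})=1$ forces, via separation and the hypothesis $\chi(x)=1$, the equality $\chi(x)=\chi(\boldsymbol{e})$ to propagate to $x=\boldsymbol{e}$. The anticipated obstacle is part (c): a single character agreeing with the neutral element is a priori far from separating $x$ from $\boldsymbol{e}$, so the argument must instead leverage idempotency to control \emph{all} characters at once. The intended resolution is that under idempotency one combines $\chi(x)=1$ with the $\{0,1\}$-valuedness from (a) to build the contradiction, so part (c) is most naturally read as: the existence of such a pair is incompatible with every character being $\{0,1\}$-valued, hence by (a) incompatible with idempotency.
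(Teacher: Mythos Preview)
Part (a) is correct and identical to the paper's argument.

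Part (b) has a genuine gap. You write ``since $x_n\to x\in\mathbb{M}$ we have $\chi(x_n)\to\chi(x)$ by the convergence structure encoded in Definition \ref{FC} (ii)'', but condition (ii) runs in the opposite direction: it \emph{assumes} that $\chi'(x_n)$ converges for \emph{every} $\chi'\in\widetilde{\mathbb{M}}$ (to a function not identically zero) and \emph{produces} a $\tau$-limit. Characters in $\widetilde{\mathbb{M}}$ are only assumed Borel measurable, not $\tau$-continuous, so $x_n\to x$ alone does not yield $\chi(x_n)\to\chi(x)$; indeed, establishing that kind of continuity is precisely what (b) is about. The paper closes this gap by passing to a \emph{further} subsequence that converges in the metric $\rho_{\widetilde{\mathbb{M}}}$, using compactness of $(\mathbb{M}^\ast,\rho_{\widetilde{\mathbb{M}}})$ from Proposition \ref{pro:SigMet} (ii). Along that subsequence every character converges, and the limit function is not identically zero because $\chi(x_{m(k)})=1$; now Definition \ref{FC} (ii) legitimately applies, giving a $\tau$-limit $x'$ with $\chi(x')=1$, and Hausdorffness forces $x'=x$, the desired contradiction.

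Part (c) is not proved. Your concluding sentence asserts that the hypothesis ``is incompatible with every character being $\{0,1\}$-valued'', but this is false: in Example \ref{ex:ind} every character is $\{0,1\}$-valued and yet $\mathbf{1}_{[0,2]}(1)=1$ with $1\neq 0=\boldsymbol{e}$. The paper's route is entirely different from anything you attempt: it invokes the convergence criterion of Theorem \ref{proFellercone} (ii) (applicable since $\prod_{n\geq 1}\chi(x)=1>0$) to form $y:=\bigoplus_{i=1}^\infty x\in\mathbb{M}$, uses part (iv) of that theorem to get $y\oplus y=y$, and uses separation of points to find $\chi_0$ with $\chi_0(x)<1$, whence $\chi_0(y)=0$ and $y\neq\boldsymbol{e}$. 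You should note, however, that this exhibits a nontrivial \emph{idempotent} element rather than a non-idempotent one; since Example \ref{ex:ind} satisfies the hypothesis of (c) and \emph{is} idempotent, the statement as written appears to be in error independently of the proof strategy, which may explain the difficulty you encountered.
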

\begin{proof}
To prove $a)$, first suppose that every character $\chi$ in $ \widetilde{\mathbb{M}}$ only takes values on $\{0,1\}$, then $\chi(x\oplus x)=\chi(x)^2=\chi(x)$ for all $\chi\in\widetilde{\mathbb{M}}$ and $x\in\mathbb{M}$, so that $x\oplus x=x$, because $\widetilde{\mathbb{M}}$ separates points. Conversely, assume that $x\oplus x=x$ for each $x\in\mathbb{M}$. So that, for every character $\chi$ in $\widetilde{\mathbb{M}}$, we have $\chi(x)^2=\chi(x)$, which implies that $\chi(x)\in\{0,1\}$ for all $x\in\mathbb{M}$.

For part $b)$, assume that $(x_n)_{n\in\mathbb{N}}$ is a sequence in $\mathbb{M}$ such that $x_n\to x$ as $n\to\infty$ and $\chi(x)=0$ for some $x\in\mathbb{M}$. For the sake of contradiction, suppose that  there exists a subsequence $(x_{n(k)})$ such that $\chi(x_{n(k)})=1$, so we can pass to a further subsequence such that $x_{m(k)}\to x'$, in $({\mathbb{M}},\rho_{\widetilde{\mathbb{M}}})$, with $x'\in\mathbb{M}^*$. By condition $(ii)$ from Definition \ref{FC}, we obtain $x=x'$, which contradicts that $\chi(x)=0$. So that \begin{align*}
    \limsup_n\chi(x_n)\leq\chi(x),\quad \chi\in\widetilde{\mathbb{M}},
\end{align*}
for every sequence $(x_n)_{n\in\mathbb{N}}$ such that $x_n\to x$.

To prove $c)$, note that part (ii) from Theorem \ref{proFellercone} ensures that $\lim_{n\to\infty}\oplus_{i=1}^nx=\oplus_{i=1}^\infty x\in\mathbb{M}$. We further have $\chi(\oplus_{i=1}^\infty x)=1$, and by part $(iv)$ from Theorem \ref{proFellercone}, $\left(\oplus_{i=1}^\infty x\right)\oplus \left(\oplus_{i=1}^\infty x\right)=\oplus_{i=1}^\infty x$. Since $x\neq\boldsymbol{e}$, there is a character $\chi_0\in\widetilde{\mathbb{M}}$ such that $\chi_0(x)\in[0,1)$ so that $\chi_0(\oplus_{i=1}^\infty x)=0$ which implies that $\oplus_{i=1}^\infty x\neq\boldsymbol{e}$. Then, $\mathbb{M}$ is not idempotent.
\end{proof}

For a determining class of characters $\widetilde{\mathbb{M}}$, we may consider the mapping $\varphi_{\widetilde{\mathbb{M}}}$ from ${\mathbb{M}}$ into $[0,1]$ defined as
\begin{align*}
   \varphi_{\widetilde{\mathbb{M}}}\equiv\sum_{n\geq1}\frac{1-\chi_n}{2^n},
\end{align*}
where $(\chi_n)_{n\geq1}$ is some ordering of $\widetilde{\mathbb{M}}$. The following condition will be useful, in order to obtain the Lévy-Khintchine formula for subordinators taking values in $\mathbb{M}$, in the forthcoming section.

\begin{enumerate}
    \item[\textbf{(I)}] The associated function $\varphi_{\widetilde{\mathbb{M}}}$ of a Feller topological monoid $(\mathbb{M},\widetilde{\mathbb{M}})$ satisfies
    \begin{align*}
    \alpha_{\widetilde{\mathbb{M}}}(\chi):=\lim_{x\to\boldsymbol{e}}\frac{1-\chi(x)}{\varphi_{\widetilde{\mathbb{M}}}(x)}\in[0,+\infty),\; \text{for all}\; \chi\in\widetilde{\mathbb{M}}.
\end{align*}
\end{enumerate}
Observe that, by part $(i)$ in Theorem \ref{proFellercone}, the previous ratio that appears in the limit is well-defined. 
Also, notice that condition (\textbf{I}) depends on the ordering $(\chi_n)_{n\geq 1}$ of $\widetilde{\mathbb{M}}$. For simplicity, we omit this dependence from our notation, which will be clear from the context.

\begin{example}
     Let $(\mathbb{M},\widetilde{\mathbb{M}})$ be an idempotent Feller topological monoid. 
     Then $(\mathbb{M},\widetilde{\mathbb{M}})$ satisfies condition \emph{\textbf{(I)}}. Even more, $\alpha_{\widetilde{\mathbb{M}}}\equiv0$ for any ordering of $\widetilde{\mathbb{M}}$; indeed, by part $b)$ in Lemma \ref{IdemCones}, there is $\varepsilon>0$ such that if $x\in B_\varepsilon(\boldsymbol{e})\backslash \{\boldsymbol{e}\}$ then $x\in\chi^{-1}(1)$ and $\varphi_{\widetilde{\mathbb{M}}}(x)>0$.
\end{example}

\begin{example} 
Let $(\mathbb{R}_+,\operatorname{Exp}(\mathbb{R}_+))$ be the Feller topological monoid in Example \ref{ex:exp}.
Suppose that $(\lambda_n)_{n\geq 1}$ is an ordering of $\mathbb{Q}^{+}$ such that $1/\alpha:=\sum_{n\geq1}\lambda_n/2^n<\infty$ (for example, Cantor's ordering $(\lambda_n)_{n\geq1}$ works since $\lambda_n\leq n,\;n\geq1$).
Then
$(\mathbb{R}_+,\operatorname{Exp}(\mathbb{R}_+))$ satisfies the condition \emph{\textbf{(I)}}
since
\begin{align*}
    \lim_{x\to0}\frac{1-e^{-\lambda x}}{\sum_{n\geq1}(1-e^{-\lambda_n x})/2^n}=\lambda\left(\sum_{n\geq1}\frac{\lambda_n}{2^n} \right)^{-1}=\lambda\alpha,\quad \lambda\in\mathbb{Q}_+,
\end{align*}
i.e., $\alpha_{\operatorname{Exp}(\mathbb{R}_+)}(e_\lambda)=\lambda\alpha$.
\end{example}

\subsection{Laplace Transform}
We gather here some of the fundamental properties of the Laplace transform on a Feller topological monoid $(\mathbb{M},\widetilde{\mathbb{M}})$, following nearly the approach in \cite[IX, \textsection5, No. 7]{MR2098271}.
For each finite measure $\mu$ on $(\mathbb{M},\mathcal{B}(\mathbb{M}))$, we define the Laplace transform of $\mu$ as the function $\mathscr{L}\mu$ from $\widetilde{\mathbb{M}}$ into $ [0,\infty)$ given by
\begin{align*}
    \mathscr{L}\mu(\chi)\coloneqq \int_{\mathbb{M}}\chi(s)\,\mathrm{d}\mu(s).
\end{align*}
Given a probability space $(\Omega,\mathcal{F},\mathbb{P})$ and an $\mathbb{M}$-valued random element $X$ on it, we define the Laplace transform of $X$ as the Laplace transform of the probability measure $\mathbb{P}(X\in\cdot)$, denoted as $\mathscr{L}_X$. Observe that $\mathscr{L}_X(\chi)=\mathbb{E}[\chi(X)]$ for every $\chi\in\widetilde{\mathbb{M}}$, where $\mathbb{E}$ is the expectation operator associated to $\mathbb{P}$.

A \textit{full submonoid} $\mathcal{S}$ is a subset of continuous characters, such that $\mathcal{S}\cap\textrm{Ch}_0(\mathbb{M})$ separates points, $\mathcal{S}\cap\textrm{Ch}_0(\mathbb{M})$ is non-vanishing everywhere, and $\chi_{\boldsymbol{1}}\in\mathcal{S}$, where $\chi_{\boldsymbol{1}}$ is the constant character equals to $1$. 

\begin{theorem}
Let $\mathcal{S}$ be a full submonoid of an abelian topological monoid $(\mathbb{M},\tau,\oplus)$. If $\mu_1$ and $\mu_2$ are two finite measure on $(\mathbb{M},\mathcal{B}(\mathbb{M}))$, such that $\mathscr{L}\mu_1$ and $\mathscr{L}\mu_2$ have the same restriction to $\mathcal{S}\cap\emph{\textrm{Ch}}_0(\mathbb{M})$, then $\mu_1=\mu_2$ on $(\mathbb{M},\mathcal{B}(\mathbb{M}))$.
\end{theorem}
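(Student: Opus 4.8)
The plan is to reduce the statement to the uniqueness part of the Riesz representation theorem by way of the Stone--Weierstrass theorem, exploiting the fact that the characters in $\mathcal{S}\cap\textrm{Ch}_0(\mathbb{M})$ are continuous functions that vanish at infinity, separate points, and vanish nowhere. Throughout I use that $(\mathbb{M},\tau)$ is locally compact Hausdorff (as it is in the Feller setting that surrounds this statement), so that $C_0(\mathbb{M})$, the space of real-valued continuous functions vanishing at infinity, is the natural ambient space, and that $\mathcal{S}$, being a submonoid, is closed under pointwise multiplication and contains $\chi_{\boldsymbol{1}}$.

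First I would record that every $\chi\in\mathcal{S}\cap\textrm{Ch}_0(\mathbb{M})$ lies in $C_0(\mathbb{M})$: continuity is part of the definition of a full submonoid, and the decay at $\partial_\infty$ is exactly the defining property of $\textrm{Ch}_0(\mathbb{M})$. Let $A$ denote the $\mathbb{R}$-linear span of $\mathcal{S}\cap\textrm{Ch}_0(\mathbb{M})$ inside $C_0(\mathbb{M})$. The key algebraic point is that $A$ is a subalgebra: for $\chi_1,\chi_2\in\mathcal{S}\cap\textrm{Ch}_0(\mathbb{M})$, the pointwise product $\chi_1\chi_2$ is again a character, it lies in $\mathcal{S}$ since $\mathcal{S}$ is closed under pointwise multiplication, and it vanishes at $\partial_\infty$ because $\chi_1\to 0$ there while $\chi_2$ is bounded by $1$; hence $\chi_1\chi_2\in\mathcal{S}\cap\textrm{Ch}_0(\mathbb{M})\subset A$, and closure of $A$ under products follows by bilinearity. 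By hypothesis $A$ separates points of $\mathbb{M}$ and vanishes at no point of $\mathbb{M}$.

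Next I would invoke the locally compact form of the Stone--Weierstrass theorem: a subalgebra of $C_0(\mathbb{M})$ that separates points and vanishes nowhere is uniformly dense in $C_0(\mathbb{M})$. Thus $\overline{A}=C_0(\mathbb{M})$. Since $\mathscr{L}\mu_1$ and $\mathscr{L}\mu_2$ coincide on $\mathcal{S}\cap\textrm{Ch}_0(\mathbb{M})$, linearity yields $\int_{\mathbb{M}}f\,\mathrm{d}\mu_1=\int_{\mathbb{M}}f\,\mathrm{d}\mu_2$ for every $f\in A$. As $\mu_1,\mu_2$ are finite, the functionals $f\mapsto\int_{\mathbb{M}}f\,\mathrm{d}\mu_i$ are bounded on $(C_0(\mathbb{M}),\|\cdot\|_\infty)$; agreeing on the dense subspace $A$, they agree on all of $C_0(\mathbb{M})$. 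Since $(\mathbb{M},\tau)$ is locally compact, second countable and Hausdorff, every finite Borel measure on it is Radon, so the uniqueness in the Riesz representation theorem forces $\mu_1=\mu_2$ on $\mathcal{B}(\mathbb{M})$.

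I expect the only genuinely delicate point to be the verification, directly from the definition of a full submonoid, that $A$ is stable under multiplication and vanishes nowhere, together with the bookkeeping that $\mathcal{S}\cap\textrm{Ch}_0(\mathbb{M})$ is itself closed under products; once these are in place, the argument is the standard Stone--Weierstrass-plus-Riesz route. It is worth emphasizing that one need \emph{not} match the total masses $\mu_i(\mathbb{M})$ separately---equivalently, one need not bring in the constant character $\chi_{\boldsymbol{1}}$---because agreement on $C_0(\mathbb{M})$ (indeed on $C_c(\mathbb{M})$) already determines a Radon measure. An alternative would pass to the one-point compactification $\mathbb{M}^{\ast}$, adjoin the constants to $A$, and apply the compact Stone--Weierstrass theorem in $C(\mathbb{M}^{\ast})$; this route, however, would require separately identifying the total masses and is therefore less economical.
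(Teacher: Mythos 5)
Your argument is correct, and it is worth noting that the paper itself offers no proof of this theorem: it is quoted from Bourbaki (the reference \cite{MR2098271} cited at the start of the subsection), so there is no in-paper proof to compare against. Your Stone--Weierstrass-plus-Riesz route is exactly the standard argument behind such uniqueness statements, and all the steps check out: $\mathcal{S}\cap\mathrm{Ch}_0(\mathbb{M})$ is stable under products (a product of two such characters is again in $\mathcal{S}$ and still tends to $0$ at $\partial_\infty$ since characters take values in $[0,1]$), so its linear span is a subalgebra of $C_0(\mathbb{M})$ that separates points and vanishes nowhere, hence is dense; equality of the two bounded functionals on this dense subspace, together with the Radon property of finite Borel measures, forces $\mu_1=\mu_2$. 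Your remark that the constant character $\chi_{\boldsymbol{1}}$ is not needed is also correct.

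The only point deserving emphasis is one you already flag: the theorem as literally stated assumes only that $(\mathbb{M},\tau,\oplus)$ is an abelian topological monoid, whereas your proof (and any proof of this kind) needs $(\mathbb{M},\tau)$ to be locally compact Hausdorff for $C_0(\mathbb{M})$ and Stone--Weierstrass to make sense, and needs the finite Borel measures to be Radon (guaranteed here by second countability). These hypotheses are implicit in the paper's standing framework and do hold in the one place the theorem is invoked --- the subsequent corollary applies it to $(\mathbb{M},\rho_{\widetilde{\mathbb{M}}})$, which is an open subset of the compact metric space $(\mathbb{M}^{\ast},\rho_{\widetilde{\mathbb{M}}})$ and hence locally compact, separable and metrizable --- so your proof is complete for every use the paper makes of the statement, but you are right to make the assumptions explicit rather than let them ride silently on the word ``monoid''.
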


\begin{corollary}Let $(\mathbb{M},\widetilde{\mathbb{M}})$ be a Feller topological monoid. If $\mu_1$ and $\mu_2$ are two finite measures on $(\mathbb{M},\mathcal{B}(\mathbb{M}))$ such that $\mathscr{L}\mu_1$ and $\mathscr{L}\mu_2$ have the same restriction to $\widetilde{\mathbb{M}}$, then $\mu_1=\mu_2$ on $(\mathbb{M},\mathcal{B}(\mathbb{M}))$.
\end{corollary}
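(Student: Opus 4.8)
The plan is to derive this from the preceding Theorem by producing a full submonoid $\mathcal{S}$ whose trace on $\textrm{Ch}_0(\mathbb{M})$ is essentially $\widetilde{\mathbb{M}}$. The obstacle is that the Theorem requires \emph{continuous} characters forming a full submonoid of an abelian \emph{topological} monoid, whereas the members of $\widetilde{\mathbb{M}}$ are a priori only $\mathcal{B}(\mathbb{M})$-measurable. This is exactly what Proposition~\ref{pro:SigMet} is designed to repair: I would work not with $(\mathbb{M},\tau,\oplus)$ but with $(\mathbb{M},\tau_{\widetilde{\mathbb{M}}},\oplus)$. By Proposition~\ref{pro:SigMet}(ii) this is again an abelian topological monoid, and by Proposition~\ref{pro:SigMet}(i) its Borel $\sigma$-algebra coincides with $\mathcal{B}(\mathbb{M})$. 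Hence $\mu_1,\mu_2$ are finite Borel measures for this topology as well, and the hypothesis $\mathscr{L}\mu_1=\mathscr{L}\mu_2$ on $\widetilde{\mathbb{M}}$ together with the desired conclusion $\mu_1=\mu_2$ are unaffected by the change of topology.

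Next I would set $\mathcal{S}\coloneqq\widetilde{\mathbb{M}}\cup\{\chi_{\boldsymbol{1}}\}$ and verify the defining conditions of a full submonoid relative to $\tau_{\widetilde{\mathbb{M}}}$. Continuity: since $\rho_{\widetilde{\mathbb{M}}}$-convergence forces convergence in each coordinate $\chi_n$, every $\chi\in\widetilde{\mathbb{M}}$ is $\tau_{\widetilde{\mathbb{M}}}$-continuous, and $\chi_{\boldsymbol{1}}$ is constant hence continuous. Closure under pointwise multiplication (so that $\mathcal{S}$ is genuinely a submonoid with unit $\chi_{\boldsymbol{1}}$) is immediate because $\widetilde{\mathbb{M}}$ is closed under pointwise multiplication and $\chi_{\boldsymbol{1}}$ acts as the identity. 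The key point is that $\widetilde{\mathbb{M}}\subseteq\textrm{Ch}_0(\mathbb{M})$: by Proposition~\ref{pro:SigMet}(ii) the space $(\mathbb{M}^{\ast},\rho_{\widetilde{\mathbb{M}}})$ is compact and each $\chi\in\widetilde{\mathbb{M}}$ extends continuously to $\mathbb{M}^{\ast}$ with $\chi(\partial_\infty)=0$, so $\chi$ tends to zero at $\partial_\infty$; equivalently this is the forward implication of Definition~\ref{FC}(i). Consequently $\mathcal{S}\cap\textrm{Ch}_0(\mathbb{M})\supseteq\widetilde{\mathbb{M}}$, and Lemma~\ref{lem:property}(i)--(ii) shows that $\widetilde{\mathbb{M}}$ is non-vanishing everywhere and separates points, properties that pass to the larger set $\mathcal{S}\cap\textrm{Ch}_0(\mathbb{M})$. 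Since $\chi_{\boldsymbol{1}}\in\mathcal{S}$ by construction, $\mathcal{S}$ is a full submonoid of $(\mathbb{M},\tau_{\widetilde{\mathbb{M}}},\oplus)$.

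It then remains to match the hypotheses. In the non-compact case $\chi_{\boldsymbol{1}}\notin\textrm{Ch}_0(\mathbb{M})$, so $\mathcal{S}\cap\textrm{Ch}_0(\mathbb{M})=\widetilde{\mathbb{M}}$, on which $\mathscr{L}\mu_1$ and $\mathscr{L}\mu_2$ agree by assumption; the preceding Theorem then yields $\mu_1=\mu_2$ on $\mathcal{B}(\mathbb{M})$, which is the claim. The only step demanding care is precisely this bookkeeping, together with the verification that passing to $\tau_{\widetilde{\mathbb{M}}}$ leaves the measures, their $\sigma$-algebra, and their Laplace transforms intact; both are supplied by Proposition~\ref{pro:SigMet}, while the genuinely analytic content is entirely absorbed into the Theorem being invoked.
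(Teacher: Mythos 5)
Your proposal is correct and follows essentially the same route as the paper: pass to the metrizable topology $\tau_{\widetilde{\mathbb{M}}}$ via Proposition~\ref{pro:SigMet}, observe that $\mathcal{S}=\widetilde{\mathbb{M}}\cup\{\chi_{\boldsymbol{1}}\}$ is a full submonoid of $(\mathbb{M},\rho_{\widetilde{\mathbb{M}}})$ using Lemma~\ref{lem:property}, and invoke the preceding uniqueness theorem. Your additional remarks (why $\widetilde{\mathbb{M}}\subseteq\mathrm{Ch}_0(\mathbb{M})$ and the identification $\mathcal{S}\cap\mathrm{Ch}_0(\mathbb{M})=\widetilde{\mathbb{M}}$ in the non-compact case) only make explicit what the paper leaves implicit.
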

\begin{proof}
By part $(i)$ in Proposition \ref{pro:SigMet}, $\mathcal{B}(\mathbb{M})=\mathcal{B}(\tau_{\widetilde{\mathbb{M}}})$. Therefore, $\mu_1$ and $\mu_2$ are two finite measure on $(\mathbb{M},\mathcal{B}(\tau_{\widetilde{\mathbb{M}}}))$. By part $(ii)$ from Proposition  \ref{pro:SigMet}, $(\mathbb{M},\rho_{\widetilde{\mathbb{M}}})$ is a topological monoid. Moreover, on $(\mathbb{M},\rho_{\widetilde{\mathbb{M}}})$ the set of characters $\mathcal{S}=\widetilde{\mathbb{M}}\cup\{\chi_{\boldsymbol{1}}\}$ is a full submonoid; indeed, by parts $(i)$ and $(ii)$ from Lemma \ref{lem:property},  $\mathcal{S}\cap \text{Ch}_0(\mathbb{M})=\widetilde{\mathbb{M}}$  separates points and is non-vanishing everywhere, and clearly the characters in $\widetilde{\mathbb{M}}$ are continuous in $(\mathbb{M},\rho_{\widetilde{\mathbb{M}}})$. The result follows immediately
from the previous theorem.
\end{proof}

\begin{corollary}Let $\mu_1$ and $\mu_2$ be two measures on $(\mathbb{M}\backslash\{\boldsymbol{e}\},\mathcal{B}(\mathbb{M}\backslash\{\boldsymbol{e}\}))$
such that
\begin{align}\label{eqn:protoLapExp}
\int_{\mathbb{M}\backslash\{\boldsymbol{e}\}} \big(1-\chi(x)\big) \mu_1(\text{d}x)=\int_{\mathbb{M}\backslash\{\boldsymbol{e}\}} \big(1-\chi(x)\big) \mu_2(\text{d}x)<+\infty,\quad \chi \in\widetilde{\mathbb{M}},
\end{align}
then $\mu_1=\mu_2$ on $(\mathbb{M}\backslash\{\boldsymbol{e}\},\mathcal{B}(\mathbb{M}\backslash\{\boldsymbol{e}\}))$.
\end{corollary}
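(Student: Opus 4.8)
The plan is to reduce the statement to the previous corollary, which settles uniqueness for \emph{finite} measures in terms of the Laplace transform on $\widetilde{\mathbb{M}}$, by multiplying $\mu_1$ and $\mu_2$ by a weight that renders them finite while preserving the coincidence of their Laplace transforms. First I would fix an arbitrary character $\chi_0\in\widetilde{\mathbb{M}}$ and define the measures $\nu_i(\mathrm{d}x):=\bigl(1-\chi_0(x)\bigr)\mu_i(\mathrm{d}x)$ on $(\mathbb{M}\backslash\{\boldsymbol e\},\mathcal{B}(\mathbb{M}\backslash\{\boldsymbol e\}))$ for $i=1,2$, extended to $\mathcal{B}(\mathbb{M})$ by setting $\nu_i(\{\boldsymbol e\})=0$. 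Since $0\le 1-\chi_0\le 1$, the total mass of $\nu_i$ equals $\int_{\mathbb{M}\backslash\{\boldsymbol e\}}(1-\chi_0)\,\mathrm{d}\mu_i$, which is finite by hypothesis; thus $\nu_1,\nu_2$ are finite measures on $\mathbb{M}$, and by \eqref{eqn:protoLapExp} their total masses agree.

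The key step is to show $\mathscr{L}\nu_1=\mathscr{L}\nu_2$ on $\widetilde{\mathbb{M}}$. For $\chi\in\widetilde{\mathbb{M}}$ I would use the pointwise identity $\chi(1-\chi_0)=(1-\chi\chi_0)-(1-\chi)$, together with the fact that $\widetilde{\mathbb{M}}$ is closed under pointwise multiplication, so $\chi\chi_0\in\widetilde{\mathbb{M}}$. This gives
\[
\int_{\mathbb{M}\backslash\{\boldsymbol e\}}\chi(x)\bigl(1-\chi_0(x)\bigr)\,\mu_i(\mathrm{d}x)=\int_{\mathbb{M}\backslash\{\boldsymbol e\}}\bigl(1-(\chi\chi_0)(x)\bigr)\,\mu_i(\mathrm{d}x)-\int_{\mathbb{M}\backslash\{\boldsymbol e\}}\bigl(1-\chi(x)\bigr)\,\mu_i(\mathrm{d}x),
\]
where the splitting is legitimate because both integrals on the right are finite by hypothesis. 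Each of the two terms on the right is, again by \eqref{eqn:protoLapExp}, independent of $i$, hence $\mathscr{L}\nu_1(\chi)=\mathscr{L}\nu_2(\chi)$ for every $\chi\in\widetilde{\mathbb{M}}$. Applying the previous corollary to the finite measures $\nu_1,\nu_2$ yields $\nu_1=\nu_2$ on $\mathcal{B}(\mathbb{M})$, and in particular on $\mathcal{B}(\mathbb{M}\backslash\{\boldsymbol e\})$.

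It remains to recover $\mu_i$ from $\nu_i$. On the set $A_{\chi_0}:=\{x\in\mathbb{M}\backslash\{\boldsymbol e\}:\chi_0(x)<1\}$ the weight $1-\chi_0$ is strictly positive, so for every Borel $B\subseteq A_{\chi_0}$ one has $\mu_i(B)=\int_B(1-\chi_0)^{-1}\,\mathrm{d}\nu_i$, and therefore $\mu_1=\mu_2$ on $A_{\chi_0}$. Now I would let $\chi_0$ range over the \emph{countable} class $\widetilde{\mathbb{M}}$: since $\widetilde{\mathbb{M}}$ separates points (Lemma~\ref{lem:property}(ii)) and $\chi(\boldsymbol e)=1$, for each $x\neq\boldsymbol e$ there is some $\chi\in\widetilde{\mathbb{M}}$ with $\chi(x)<1$, so the family $\{A_{\chi_0}:\chi_0\in\widetilde{\mathbb{M}}\}$ covers $\mathbb{M}\backslash\{\boldsymbol e\}$. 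Disjointifying this countable cover produces a measurable partition of $\mathbb{M}\backslash\{\boldsymbol e\}$ on each piece of which $\mu_1$ and $\mu_2$ coincide, whence $\mu_1=\mu_2$ on $\mathcal{B}(\mathbb{M}\backslash\{\boldsymbol e\})$ by countable additivity.

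The main obstacle I anticipate is precisely the last reduction: a single weight $1-\chi_0$ need not be strictly positive off $\boldsymbol e$ (the set $\{\chi_0=1\}$ may be large, as permitted by Lemma~\ref{IdemCones}(c)), so no single character recovers $\mu_i$ globally; the remedy is to patch over all of $\widetilde{\mathbb{M}}$, which works thanks to its countability and the separation property. A secondary point requiring care is keeping all integral manipulations within the realm of finite quantities, which is guaranteed by closure of $\widetilde{\mathbb{M}}$ under multiplication so that $\int(1-\chi\chi_0)\,\mathrm{d}\mu_i<\infty$.
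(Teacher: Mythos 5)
Your proposal is correct and follows essentially the same route as the paper: weight $\mu_i$ by $1-\chi_0$ to obtain finite measures, use closure of $\widetilde{\mathbb{M}}$ under pointwise multiplication together with the identity $\chi(1-\chi_0)=(1-\chi\chi_0)-(1-\chi)$ to match the Laplace transforms, apply the finite-measure uniqueness corollary, and then patch over all $\chi_0\in\widetilde{\mathbb{M}}$. Your final covering step is in fact slightly more carefully justified than the paper's (you invoke the separation property of $\widetilde{\mathbb{M}}$, which is the relevant fact, where the paper cites non-vanishing), but the argument is the same.
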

\begin{proof}
The arguments are adapted from those on Theorem 1.23 in \cite{MR2760602}. Let us extend $\mu_1$ and $\mu_2$ by setting $\mu_1(\{\boldsymbol{e}\})=\mu_2(\{\boldsymbol{e}\})=0$. For fixed $\chi' \in\widetilde{\mathbb{M}}$, consider the measures $\nu_i$ for $i=1,2$, defined by $\nu_i(\text{d}x)=\big(1-\chi'(x)\big)\mu_i(\text{d}x)$ on $(\mathbb{M},\mathcal{B}(\mathbb{M}))$. Since $\widetilde{\mathbb{M}}$ is closed under pointwise multiplication, then
\begin{align*}
\int_{\mathbb{M}} \big(1-(\chi\cdot\chi')\big)(x) \mu_1(\text{d}x)=\int_{\mathbb{M}} \big(1-(\chi\cdot\chi')\big)(x) \mu_2(\text{d}x)<+\infty,\quad \chi \in\widetilde{\mathbb{M}}.
\end{align*}
By subtracting (\ref{eqn:protoLapExp}) from this last equation, we get that $\mathscr{L}\nu_1$ and $\mathscr{L}\nu_2$ coincides on $\mathbb{M}\cup\{\chi_{\boldsymbol{1}}\}$ and by the preceding corollary $\nu_1=\nu_2$ on $(\mathbb{M},\mathcal{B}(\mathbb{M}))$, for every $\chi'\in\widetilde{\mathbb{M}}$. Therefore, $\mu_1=\mu_2$ on
\begin{align*}
\bigcup_{\chi'\in\widetilde{\mathbb{M}}}\left\{x\in\mathbb{M}:1-\chi'(x)>0\right\}=\mathbb{M}\backslash\{\boldsymbol{e}\},
\end{align*}
where the last equality is due to the fact that $\widetilde{\mathbb{M}}$ is non-vanishing everywhere.
\end{proof}

\begin{lemma}\label{lem:weakconv} Let $\mu,(\mu_{n})_{n\geq1}$ be probability measures on $(\mathbb{M},\mathcal{B}(\mathbb{M}))$. Suposse that $\mathscr{L}_{\mu_n}(\chi)\to\mathscr{L}_{\mu}(\chi)$, for every $\chi\in\widetilde{\mathbb{M}}$, as $n$ goes to infinity. Then $\mu_n$ converges weakly to $\mu$ on $(\mathbb{M},\tau_{\widetilde{\mathbb{M}}})$ and $(\mathbb{M},\tau)$, as $n$ goes to infinity,
\end{lemma}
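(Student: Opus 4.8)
The plan is to realize every measure on the compact metric space $(\mathbb{M}^{\ast},\rho_{\widetilde{\mathbb{M}}})$ furnished by Proposition \ref{pro:SigMet}, extract a weakly convergent subsequence there, identify its limit with $\mu$ by Laplace-transform uniqueness, and finally push the convergence down to $(\mathbb{M},\tau_{\widetilde{\mathbb{M}}})$ and then to $(\mathbb{M},\tau)$. First I would regard $\mu$ and each $\mu_n$ as Borel probability measures on $\mathbb{M}^{\ast}$ by assigning no mass to $\partial_\infty$. Since $(\mathbb{M}^{\ast},\rho_{\widetilde{\mathbb{M}}})$ is compact by Proposition \ref{pro:SigMet}(ii), the set of probability measures on it is sequentially weakly compact; hence every subsequence of $(\mu_n)_{n\geq1}$ admits a further subsequence $(\mu_{n_k})_{k\geq1}$ converging weakly on $\mathbb{M}^{\ast}$ to some probability measure $\nu$.

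Next I would identify $\nu$. Each $\chi\in\widetilde{\mathbb{M}}$, extended by $\chi(\partial_\infty)=0$, is bounded and $\rho_{\widetilde{\mathbb{M}}}$-continuous on $\mathbb{M}^{\ast}$, because $|\chi_k(x)-\chi_k(y)|\leq 2^k\rho_{\widetilde{\mathbb{M}}}(x,y)$. Weak convergence thus gives $\int_{\mathbb{M}^{\ast}}\chi\,\mathrm{d}\mu_{n_k}\to\int_{\mathbb{M}^{\ast}}\chi\,\mathrm{d}\nu$; as $\chi$ vanishes at $\partial_\infty$ and the $\mu_{n_k}$ charge only $\mathbb{M}$, the left-hand side is $\mathscr{L}_{\mu_{n_k}}(\chi)$, which converges to $\mathscr{L}_\mu(\chi)$ by hypothesis. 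Writing $\nu_{\mathbb{M}}$ for the restriction of $\nu$ to $\mathbb{M}$, a finite measure, this yields $\mathscr{L}_{\nu_{\mathbb{M}}}(\chi)=\mathscr{L}_\mu(\chi)$ for every $\chi\in\widetilde{\mathbb{M}}$. The Laplace-transform uniqueness corollary proved above (agreement on $\widetilde{\mathbb{M}}$ forces equality of finite measures) then gives $\nu_{\mathbb{M}}=\mu$, so that $\nu(\mathbb{M})=\mu(\mathbb{M})=1=\nu(\mathbb{M}^{\ast})$ and hence $\nu(\{\partial_\infty\})=0$; that is, $\nu=\mu$ on $\mathbb{M}^{\ast}$. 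Since every subsequence has a sub-subsequence converging weakly to the same limit $\mu$, the whole sequence $(\mu_n)$ converges weakly to $\mu$ on $\mathbb{M}^{\ast}$.

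Finally I would transfer the convergence. For $(\mathbb{M},\tau_{\widetilde{\mathbb{M}}})$, note that $\tau_{\widetilde{\mathbb{M}}}$ is exactly the subspace topology inherited from $\mathbb{M}^{\ast}$; given $F$ closed in $\mathbb{M}$, its closure $\overline{F}$ in $\mathbb{M}^{\ast}$ satisfies $\overline{F}\subseteq F\cup\{\partial_\infty\}$, so $\mu_n(F)=\mu_n(\overline{F})$ and $\mu(F)=\mu(\overline{F})$ because no measure charges $\partial_\infty$. The portmanteau theorem on $\mathbb{M}^{\ast}$ then gives $\limsup_n\mu_n(F)=\limsup_n\mu_n(\overline{F})\leq\mu(\overline{F})=\mu(F)$, which is the portmanteau criterion for $\mu_n\to\mu$ weakly on $(\mathbb{M},\tau_{\widetilde{\mathbb{M}}})$. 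For $(\mathbb{M},\tau)$ it suffices to check $\tau\subseteq\tau_{\widetilde{\mathbb{M}}}$: if $x_m\to x$ in $\tau_{\widetilde{\mathbb{M}}}$, then $\chi(x_m)\to\chi(x)$ for all $\chi\in\widetilde{\mathbb{M}}$, and since some $\chi(x)\neq0$ by Lemma \ref{lem:property}(i), Definition \ref{FC}(ii) together with the separation property in Lemma \ref{lem:property}(ii) yields $x_m\to x$ in $\tau$. As both topologies are metrizable, hence first countable, the identity map $(\mathbb{M},\tau_{\widetilde{\mathbb{M}}})\to(\mathbb{M},\tau)$ is continuous, so every bounded $\tau$-continuous function is $\tau_{\widetilde{\mathbb{M}}}$-continuous and weak convergence on $(\mathbb{M},\tau_{\widetilde{\mathbb{M}}})$ entails weak convergence on $(\mathbb{M},\tau)$.

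I expect the main obstacle to be the second step: ruling out escape of mass to $\partial_\infty$, i.e. proving $\nu(\{\partial_\infty\})=0$ for each subsequential limit. This is precisely where the Laplace-transform uniqueness corollary carries the argument, and the key point is that it requires agreement only on $\widetilde{\mathbb{M}}$ and not on the constant character $\chi_{\boldsymbol{1}}$, so the full unit mass of $\mu$ on $\mathbb{M}$ is recovered as a conclusion rather than being assumed.
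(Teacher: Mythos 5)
Your argument is correct, and it is genuinely different from what the paper does: the paper's entire proof is a citation of Theorem~6 of the reference on convergence-determining classes of functions, whereas you build a self-contained proof from ingredients already established in the paper. Concretely, you exploit the compactness of $(\mathbb{M}^{\ast},\rho_{\widetilde{\mathbb{M}}})$ from Proposition~\ref{pro:SigMet}(ii) to get tightness for free, identify every subsequential limit via the Laplace-transform uniqueness corollary, and correctly isolate the one point where the argument could fail --- escape of mass to $\partial_\infty$ --- observing that the uniqueness corollary needs agreement only on $\widetilde{\mathbb{M}}$ (not on $\chi_{\boldsymbol{1}}$), so that $\nu(\mathbb{M})=1$ is a conclusion rather than a hypothesis. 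The descent from $\mathbb{M}^{\ast}$ to $(\mathbb{M},\tau_{\widetilde{\mathbb{M}}})$ via the portmanteau inequality for closed sets, and then to $(\mathbb{M},\tau)$ via sequential continuity of the identity map (using Lemma~\ref{lem:property}(i)--(ii) and Definition~\ref{FC}(ii), plus metrizability of both topologies), are both sound. What your route buys is independence from the external convergence-determining-class machinery, at the cost of length; what the paper's route buys is brevity, at the cost of leaving the reader to verify that the hypotheses of the cited theorem (in particular that $\widetilde{\mathbb{M}}$ is a convergence-determining, not merely separating, class) are actually met in this setting --- a verification your proof in effect carries out explicitly.
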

\begin{proof}
This is a straightforward application of Theorem 6 in \cite{MR2673979}.
\end{proof}

\subsection{Subordinators on Feller Topological Monoids}
Throughout this section we fix a Feller topological monoid $(\mathbb{M},\tau,\oplus,\widetilde{\mathbb{M}})$ and a probability space $(\Omega,\mathscr{F},\mathbb{P})$. The stochastic processes below will be $\mathbb{M}$-valued and defined on $(\Omega,\mathscr{F},\mathbb{P})$.

For now, we give below a definition of subordinators, which is enough for our immediate purposes. A more natural definition, from the perspective of Feller semigroups, will be given in the forthcoming section.
\begin{definition}[Subordinator]\label{PDS} We say that  $(X_t)_{t\geq0}$ is a subordinator if \begin{enumerate}
\item $X_0=\boldsymbol{e}$.
\item $(X_t)_{t\geq0}$ is right-continuous.
\item Given $t\geq0$, for every $s\geq0$, there exists a random element $X^{(t)}_s$, which could be defined on an enlargement of $(\Omega,\mathscr{F},\mathbb{P})$, such that $X_{s+t}\overset{d}{=}X^{(t)}_s\oplus X_t$ where $X_s^{(t)}\overset{d}{=}X_s$ and $X^{(t)}_s$ is independent of $X_t$. 
\end{enumerate}
\end{definition}

We are interested in the Laplace transform of these subordinators, i.e. \begin{align*}
    \mathscr{L}_{X_t}(\chi)=\mathbb{E}[\chi(X_t)],\quad \chi\in\widetilde{\mathbb{M}},\; t\geq0.
\end{align*}
From $3$ in Definition \ref{PDS} and since $\chi(x)\in[0,1]$, $x\in\mathbb{M}$, it follows that $t\mapsto\mathscr{L}_{X_t}(\chi)$ is a decreasing function which satisfies the Cauchy equation\begin{align*}
    \mathscr{L}_{X_{s+t}}(\chi)=\mathscr{L}_{X_s}(\chi)\cdot\mathscr{L}_{X_t}(\chi),\quad s,t\geq0.
\end{align*}
Therefore, there exists a mapping $\Psi$ from $\widetilde{\mathbb{M}}$ into $[0,+\infty]$ such that  \begin{align*}
 \mathscr{L}_{X_t}(\chi)=\exp{\{-t\Psi(\chi)\}},\quad \chi\in\widetilde{\mathbb{M}},\; t\geq0,
\end{align*}
which gives rise to the \textit{Laplace exponent} $\Psi$ \textit{of} $(X_t)_{t\geq0}$. 

For any integer $n\geq1$, let $\big(X_t^{(1)}\big)_{t\geq0},\cdots,\big(X_t^{(n)}\big)_{t\geq0}$ be independent subordinators having the same law as $(X_t)_{t\geq0}$, which could be defined on an enlargement of $(\Omega,\mathscr{F},\mathbb{P})$. We then have for $0\leq t_1<\cdots <t_n$,
\begin{align}
    \mathbb{E}[\chi(X_{t_1})\cdots\chi(X_{t_n})]&=\mathbb{E}\left[\chi(X_{t_1}^{(1)})\cdot\chi\left(X_{t_1}^{(1)}\oplus X_{t_2-t_1}^{(2)}\right)\cdots\chi\left(X_{t_1}^{(1)}\oplus\cdots \oplus X_{t_n-t_{n-1}}^{(n)}\right)\right]\nonumber\\
    &=e^{-t_1\Psi(\chi^n)}e^{-(t_2-t_1)\Psi(\chi^{n-1})}\cdots e^{-(t_n-t_{n-1})\Psi(\chi)}\label{eqn:FDD},
\end{align}
where $\chi^n$ is the product of $\chi$ with itself $n$-times.
\begin{proposition}
If $\Psi(\chi)>0$ for any $\chi\in\widetilde{\mathbb{M}}$, then $\lim_{t\to+\infty}X_t=\partial_\infty$ a.e.
\end{proposition}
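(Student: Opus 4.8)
The plan is to reduce the convergence to $\partial_\infty$ to the pointwise vanishing of each character along the path, and then to obtain that vanishing from the supermartingale convergence theorem. By the divergence criterion in Definition \ref{FC}(i) (applied along sequences $t_n\to\infty$), for a fixed $\omega$ one has $X_t(\omega)\to\partial_\infty$ as $t\to\infty$ if and only if $\chi(X_t(\omega))\to 0$ as $t\to\infty$ for every $\chi\in\widetilde{\mathbb{M}}$. Since $\widetilde{\mathbb{M}}$ is countable, it therefore suffices to prove that, for each fixed $\chi\in\widetilde{\mathbb{M}}$, $\lim_{t\to\infty}\chi(X_t)=0$ almost surely, and then to discard the countable union of the corresponding null sets.

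Fix $\chi\in\widetilde{\mathbb{M}}$ and set $Z_t:=\chi(X_t)\in[0,1]$, with $\mathcal{F}_t:=\sigma(X_u:u\le t)$. The key step is to show that $(Z_t)_{t\ge0}$ is a nonnegative supermartingale. Using the independent-increments structure encoded in Definition \ref{PDS}(3) — the same computation that produces the finite-dimensional distributions in \eqref{eqn:FDD} — one expects
$$
\mathbb{E}[\chi(X_{t+s})\mid\mathcal{F}_t]=e^{-s\Psi(\chi)}\,\chi(X_t).
$$
Concretely I would verify $\mathbb{E}[\chi(X_{t+s})\mathbf{1}_A]=e^{-s\Psi(\chi)}\,\mathbb{E}[\chi(X_t)\mathbf{1}_A]$ for $A$ in a generating class of $\mathcal{F}_t$ of the form $\{(X_{u_1},\dots,X_{u_m})\in B\}$ with $u_j\le t$, which reduces to factoring out the increment over $(t,t+s]$ exactly as in the derivation of \eqref{eqn:FDD}; this extends to all of $\mathcal{F}_t$ by a monotone-class argument, recalling from Proposition \ref{pro:SigMet}(i) that $\mathcal{B}(\mathbb{M})=\sigma(\widetilde{\mathbb{M}})$, so that $\mathcal{F}_t$ is generated by the character coordinates. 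Since $\Psi(\chi)>0$ and $\chi\ge0$, the factor $e^{-s\Psi(\chi)}\le1$ gives $\mathbb{E}[Z_{t+s}\mid\mathcal{F}_t]\le Z_t$, i.e. $(Z_t)$ is a bounded nonnegative supermartingale.

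Finally I would invoke the supermartingale convergence theorem. Provided the paths $t\mapsto Z_t$ are right-continuous, the bounded nonnegative supermartingale $(Z_t)$ converges a.s. to some $Z_\infty\ge0$ as $t\to\infty$; taking expectations, Fatou's lemma together with $\mathbb{E}[Z_t]=\mathscr{L}_{X_t}(\chi)=e^{-t\Psi(\chi)}\to0$ gives $\mathbb{E}[Z_\infty]\le\liminf_t\mathbb{E}[Z_t]=0$, whence $Z_\infty=0$ a.s. Intersecting over $\chi\in\widetilde{\mathbb{M}}$ and using the equivalence from the first paragraph then yields the claim. I expect the main obstacle to be precisely this path-regularity: right-continuity of $t\mapsto\chi(X_t)$ holds once $(X_t)$ is right-continuous for the metric $\rho_{\widetilde{\mathbb{M}}}$ (each $\chi$ being $\rho_{\widetilde{\mathbb{M}}}$-continuous by Proposition \ref{pro:SigMet}), but characters in $\widetilde{\mathbb{M}}$ need not be $\tau$-continuous (as Example \ref{ex:ind} shows), so one must pin down the topology in which right-continuity of the subordinator is assumed and reconcile it with $\rho_{\widetilde{\mathbb{M}}}$. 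Without such right-continuity the supermartingale argument delivers only convergence along fixed sequences $t_n\to\infty$, which falls short of the genuine limit $X_t\to\partial_\infty$.
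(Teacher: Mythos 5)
Your proposal is correct and follows essentially the same route as the paper: both rest on the conditional-expectation identity coming from \eqref{eqn:FDD} and a (super)martingale convergence argument, followed by intersecting over the countable family $\widetilde{\mathbb{M}}$ and invoking Definition \ref{FC}(i). The only cosmetic difference is that the paper works with the compensated positive martingale $M_t(\chi)=\chi(X_t)e^{t\Psi(\chi)}$ and lets the factor $e^{t\Psi(\chi)}\to\infty$ force $\chi(X_t)\to0$, whereas you treat $\chi(X_t)$ directly as a bounded nonnegative supermartingale and identify the limit via Fatou; your remark on path regularity is a fair point that the paper's proof also leaves implicit.
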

\begin{proof}
Let $\chi\in\widetilde{\mathbb{M}}$ be fixed. The stochastic process  $(M_t(\chi))_{t\geq0}$, defined by $M_t(\chi)=\chi(X_t)\exp\{t\Psi(\chi)\}$, $t\geq0$, is a martingale with respect to the filtration $(\mathscr{F}_t)_{t\geq0}$ where $\mathscr{F}_t=\sigma(\chi(X_s):s\leq t)$. Indeed it is enough, by the monotone class
theorem, to show that for any times $s_1<\cdots <s_k\leq s<t$
\begin{align*}
    \mathbb{E}\left[M_t(\chi)\cdot\prod_{i=0}^k \chi(X_{s_i})\right]=\mathbb{E}\left[M_s(\chi)\cdot\prod_{i=0}^k \chi(X_{s_i})\right],
\end{align*}
but this equality follows readily by applying (\ref{eqn:FDD}) to both sides. Therefore, $(M_t(\chi))_{t\geq0}$ is a positive martingale for any $\chi\in\widetilde{\mathbb{M}}$ which implies $\lim_{t\to+\infty}\chi(X_t)\exp\{t\Psi(\chi)\}<+\infty$ for any $\chi\in\widetilde{\mathbb{M}}$ a.e. Since $\Psi(\chi)>0$, we get that $\lim_{t\to+\infty}\chi(X_t)=0$ for every $\chi\in\widetilde{\mathbb{M}}$ a.e. whence the result follows from (i) in Definition \ref{FC}.
\end{proof}
\subsubsection{Character Functionals of a Subordinator}
In this subsection, we will be interested in \textit{character functionals} of the form
\begin{align*}
   I_t=\int_0^t \chi(X_s)\text{d}s,\quad \chi\in\widetilde{\mathbb{M}},\;t\geq0,
\end{align*}
which are a generalization of the classical \textit{exponential functionals of Lévy processes} (see \cite{MR2178044}). The following proposition is an extension of Theorem 2 (i) in \cite{MR2178044}. However, the second part of the proof presented there does not work in this general setting. But a natural proof arises when this result is stated in our context, as we shall see next.
\begin{proposition}
Let $\boldsymbol{e}_q$ denote a random time which has exponential distribution with rate parameter $q\geq0$ (for $q=0$, we have $\boldsymbol{e}_0\equiv\infty$) independent of $(X_t)_{t\geq0}$. For any integer $n\geq1$, we have\begin{align}
    \mathbb{E}\left[\left(I_{\boldsymbol{e}_q}\right)^n\right]=\frac{n!}{(q+\Psi(\chi))\cdots(q+\Psi(\chi^n))}.
\end{align}
\end{proposition}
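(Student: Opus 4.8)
The plan is to reduce the computation of the $n$-th moment of $I_{\boldsymbol{e}_q}$ to an explicit $n$-dimensional integral over the ordered simplex, which can then be evaluated directly from the finite-dimensional distributions recorded in (\ref{eqn:FDD}). First I would fix the character $\chi\in\widetilde{\mathbb{M}}$ and, using that $s\mapsto\chi(X_s)\in[0,1]$ is nonnegative, symmetrize
\begin{align*}
(I_t)^n=\left(\int_0^t\chi(X_s)\,\mathrm{d}s\right)^n=n!\int_{0\leq s_1\leq\cdots\leq s_n\leq t}\chi(X_{s_1})\cdots\chi(X_{s_n})\,\mathrm{d}s_1\cdots\mathrm{d}s_n,
\end{align*}
and take expectations. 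Because the integrand is nonnegative and the map $(\omega,s)\mapsto\chi(X_s(\omega))$ is jointly measurable --- the characters being $\tau_{\widetilde{\mathbb{M}}}$-continuous (immediate from the definition of $\rho_{\widetilde{\mathbb{M}}}$ in Proposition \ref{pro:SigMet}) and the paths right-continuous, so that $\chi(X_\cdot)$ is progressively measurable --- Tonelli's theorem allows me to interchange $\mathbb{E}$ with the iterated integral.

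Next I would incorporate the independent time $\boldsymbol{e}_q$. For $q>0$ it has density $q\mathrm{e}^{-qt}$ on $(0,\infty)$, so conditioning gives $\mathbb{E}[(I_{\boldsymbol{e}_q})^n]=\int_0^\infty q\mathrm{e}^{-qt}\,\mathbb{E}[(I_t)^n]\,\mathrm{d}t$, and a further application of Tonelli integrates out $t$ over $[s_n,\infty)$, producing the weight $\int_{s_n}^\infty q\mathrm{e}^{-qt}\,\mathrm{d}t=\mathrm{e}^{-qs_n}$. The case $q=0$, where $\boldsymbol{e}_0\equiv\infty$ and $I_{\boldsymbol{e}_0}=\int_0^\infty\chi(X_s)\,\mathrm{d}s$, is handled by monotone convergence as $t\to\infty$, which replaces the weight by $1=\mathrm{e}^{-0\cdot s_n}$. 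In either case I arrive, for all $q\geq0$, at
\begin{align*}
\mathbb{E}\left[(I_{\boldsymbol{e}_q})^n\right]=n!\int_{0\leq s_1\leq\cdots\leq s_n}\mathrm{e}^{-qs_n}\,\mathbb{E}\left[\chi(X_{s_1})\cdots\chi(X_{s_n})\right]\,\mathrm{d}s_1\cdots\mathrm{d}s_n.
\end{align*}

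It then remains to substitute the closed form from (\ref{eqn:FDD}), namely
\begin{align*}
\mathbb{E}\left[\chi(X_{s_1})\cdots\chi(X_{s_n})\right]=\exp\left\{-s_1\Psi(\chi^n)-(s_2-s_1)\Psi(\chi^{n-1})-\cdots-(s_n-s_{n-1})\Psi(\chi)\right\},
\end{align*}
and to perform the change of variables $u_1=s_1$ and $u_k=s_k-s_{k-1}$ for $2\leq k\leq n$, which has Jacobian $1$ and maps the ordered region bijectively onto $(0,\infty)^n$. Collecting exponents, the combined integrand factorizes as $\prod_{k=1}^n\exp\{-u_k(q+\Psi(\chi^{n-k+1}))\}$, so the integral splits into one-dimensional integrals $\int_0^\infty\exp\{-u_k(q+\Psi(\chi^{n-k+1}))\}\,\mathrm{d}u_k=(q+\Psi(\chi^{n-k+1}))^{-1}$. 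Relabelling $j=n-k+1$ yields the asserted
\begin{align*}
\mathbb{E}\left[(I_{\boldsymbol{e}_q})^n\right]=\frac{n!}{(q+\Psi(\chi))\cdots(q+\Psi(\chi^n))},
\end{align*}
with both sides read in $[0,+\infty]$.

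The computation is elementary once (\ref{eqn:FDD}) is in hand, and this is precisely the point: the independence and stationarity built into Definition \ref{PDS} furnish the exact finite-dimensional distributions, so the moment becomes a genuine product integral, bypassing the recursion used in Theorem 2 of \cite{MR2178044}. I expect the only genuine obstacles to be the two invocations of Tonelli's theorem (and the joint measurability they rest on) and the degenerate $q=0$ case, both addressed above; the remaining subtlety is the bookkeeping that aligns each increment $s_k-s_{k-1}$ with the power $\Psi(\chi^{n-k+1})$.
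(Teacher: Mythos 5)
Your proof is correct and follows essentially the same route as the paper's: symmetrize $(I_t)^n$ over the ordered simplex, insert the finite-dimensional formula \eqref{eqn:FDD}, and change variables to the increments $u_k=s_k-s_{k-1}$. The only difference is that you integrate out $\boldsymbol{e}_q$ before the simplex integral (producing the weight $e^{-qs_n}$), whereas the paper first evaluates $\mathbb{E}[(I_t)^n]$ via $\mathbb{P}[X_1+\cdots+X_n<t]$ for independent exponentials and then averages over $t$ — an immaterial reordering of the same Tonelli argument.
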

\begin{proof}
For any real $t>0$, by (\ref{eqn:FDD}), we have \begin{align*}
    \mathbb{E}&\left[\left(\int_0^t\chi(X_t)\text{d}t\right)^n\right]\\
    &=n!\cdot\mathbb{E}\left[\int_0^\infty \text{d}t_1\cdots \int_0^\infty \text{d}t_n\mathbf{1}_{\{0<t_1<\cdots<t_n<t\}}\chi(X_{t_1})\cdots\chi(X_{t_n})\right]\\
    &=n!\cdot\int_0^\infty \text{d}t_1\cdots \int_0^\infty \text{d}t_n\mathbf{1}_{\{0<t_1<\cdots<t_n<t\}}e^{-t_1\Psi(\chi^n)}e^{-(t_2-t_1)\Psi(\chi^{n-1})}\cdots e^{-(t_n-t_{n-1})\Psi(\chi)}\\
    &=n!\cdot\int_0^\infty \text{d}u_1\cdots \int_0^\infty \text{d}u_n\mathbf{1}_{\{u_1+\cdots u_n<t\}}e^{-u_1\Psi(\chi^n)}e^{-u_2\Psi(\chi^{n-1})}\cdots e^{-u_n\Psi(\chi)}\\
    &=\left(\prod_{k=1}^n\frac{k}{\Psi(\chi^k)}\right)\mathbb{P}[X_1+\cdots +X_n<t],
\end{align*}
where $X_1,\cdots,X_n$ are independent exponential random variable of parameter $\Psi(\chi),\cdots,\Psi(\chi^n)$ respectively. Therefore
\begin{align*}
    \mathbb{E}\left[\left(I_{\boldsymbol{e}_q}\right)^n\right]&=\int_0^\infty\mathbb{E}\left[\left(I_t\right)^n\right]qe^{-qt}\text{d}t\\
    &=\left(\prod_{k=1}^n\frac{k}{\Psi(\chi^k)}\right)\int_0^\infty \mathbb{P}[X_1+\cdots+X_n<t]qe^{-qt}\text{d}t\\
&=\left(\prod_{k=1}^n\frac{k}{\Psi(\chi^k)}\right)\prod_{k=1}^n \mathbb{E}\left[e^{-qX_k}\right],
\end{align*}
whence the result follows easily.
\end{proof}

\begin{remark}
Let us notice that if $\mathbb{M}$ is an idempotent Feller topological monoid, then $I_{\boldsymbol{e}_q}$ is an exponential random variable of parameter $q+\Psi(\chi)$.
\end{remark}

\subsubsection{Bochner's Subordination}
A key technique in the study of Markov process is the use of Bochner's subordination (from where subordinators take their name), that it is a random time change of a Markov processes $(M_t)_{t\geq0}$ by an independent additive subordinator $(\sigma_t)_{t\geq0}$, which produces a new Markov process $(M_{\sigma_t})_{t\geq0}$. For a detailed exposition we refer the reader to \cite[Section 8.4]{MR1746300} and references therein. The following result can be seen as an extension of Proposition 8.6 from \cite{MR1746300}.

\begin{proposition}
Let $(X_t)_{t\geq0}$ be a subordinator with Laplace exponent $\Psi$. Suppose that  $(\sigma_t)_{t\geq0}$ is an additive subordinator with Laplace exponent $\Phi$ independent of $(X_t)_{t\geq0}$, which could be defined on an enlargement of $(\Omega,\mathscr{F},\mathbb{P})$. Define the stochastic process $(Y_t)_{t\geq0}$ by \begin{align*}
    Y_t=X_{\sigma_t},\quad t\geq0.
\end{align*}
Then $(Y_t)_{t\geq0}$ is a subordinator with Laplace exponent $\Phi\circ\Psi$.
\end{proposition}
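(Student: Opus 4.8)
The plan is to verify directly the three defining properties of Definition \ref{PDS} for the process $(Y_t)_{t\geq 0}$, reading off the Laplace exponent along the way. Properties 1 and 2 are quickly dispatched. Since $(\sigma_t)_{t\geq 0}$ is an additive subordinator we have $\sigma_0 = 0$, hence $Y_0 = X_{\sigma_0} = X_0 = \boldsymbol{e}$. For right-continuity, I would use that $(\sigma_t)_{t\geq 0}$ is nondecreasing and right-continuous while $(X_t)_{t\geq 0}$ has right-continuous paths: for $t_k \downarrow t$ we get $\sigma_{t_k} \downarrow \sigma_t$, and composing with the right-continuous path of $X$ yields $Y_{t_k} = X_{\sigma_{t_k}} \to X_{\sigma_t} = Y_t$, so $(Y_t)_{t\geq 0}$ is right-continuous.

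The core of the argument is the computation of $\mathscr{L}_{Y_t}$. First I would observe that the right-continuity of $(X_t)_{t\geq 0}$ makes $(u,\omega)\mapsto \chi(X_u(\omega))$ jointly measurable, so that conditioning on $(\sigma_t)_{t\geq 0}$ and invoking the independence of $(X_t)$ and $(\sigma_t)$ together with Fubini's theorem gives
\begin{align*}
\mathscr{L}_{Y_t}(\chi) = \mathbb{E}\big[\chi(X_{\sigma_t})\big] = \int_{[0,\infty)} \mathbb{E}\big[\chi(X_u)\big]\,\mathbb{P}(\sigma_t \in \mathrm{d}u) = \mathbb{E}\big[\exp\{-\sigma_t\,\Psi(\chi)\}\big],
\end{align*}
where I used $\mathbb{E}[\chi(X_u)] = \exp\{-u\Psi(\chi)\}$. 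Recognising the right-hand side as the Laplace transform of the additive subordinator $\sigma_t$ evaluated at $\Psi(\chi)\in[0,\infty]$, the defining relation $\mathbb{E}[e^{-\lambda\sigma_t}] = \exp\{-t\Phi(\lambda)\}$ yields
\begin{align*}
\mathscr{L}_{Y_t}(\chi) = \exp\{-t\,\Phi(\Psi(\chi))\}, \quad \chi\in\widetilde{\mathbb{M}},\ t\geq 0,
\end{align*}
where for the degenerate value $\Psi(\chi)=+\infty$ one reads $\Phi(+\infty)$ as $\lim_{\lambda\to\infty}\Phi(\lambda)$, consistently with $\mathbb{E}[\exp\{-\sigma_t\Psi(\chi)\}]=\mathbb{P}(\sigma_t=0)$ in that case.

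It remains to establish property 3, and here I would exploit that this property is purely distributional, so it suffices to match Laplace transforms. Writing $\mu_r \coloneqq \mathbb{P}(Y_r\in\cdot)$ and letting $\mu_s * \mu_t$ denote the image of $\mu_s\otimes\mu_t$ under $\oplus$, the character identity $\chi(x\oplus y)=\chi(x)\chi(y)$ gives $\mathscr{L}_{\mu_s * \mu_t}(\chi)=\mathscr{L}_{\mu_s}(\chi)\,\mathscr{L}_{\mu_t}(\chi)$. The formula just obtained then yields
\begin{align*}
\mathscr{L}_{Y_{s+t}}(\chi) = \exp\{-(s+t)\Phi(\Psi(\chi))\} = \mathscr{L}_{Y_s}(\chi)\,\mathscr{L}_{Y_t}(\chi) = \mathscr{L}_{\mu_s * \mu_t}(\chi)
\end{align*}
for every $\chi\in\widetilde{\mathbb{M}}$. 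The uniqueness corollary for Laplace transforms proved above then forces $\mu_{s+t}=\mu_s * \mu_t$; that is, $Y_{s+t}\overset{d}{=}Y_s^{(t)}\oplus Y_t$ with $Y_s^{(t)}\overset{d}{=}Y_s$ independent of $Y_t$ (realised on a suitable enlargement), which is precisely property 3. Since $\mathscr{L}_{Y_t}(\chi)=\exp\{-t\,(\Phi\circ\Psi)(\chi)\}$, this simultaneously identifies the Laplace exponent of $(Y_t)_{t\geq 0}$ as $\Phi\circ\Psi$.

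I expect the only genuinely technical point to be the measurability underlying the Fubini step — ensuring $(u,\omega)\mapsto\chi(X_u(\omega))$ is jointly measurable so that conditioning on $\sigma_t$ is legitimate — together with the careful treatment of the random-time composition $X_{\sigma_t}$; everything else is a formal consequence of the character property and the uniqueness of the Laplace transform.
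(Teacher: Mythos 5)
Your proof is correct and follows essentially the same route as the paper: the paper's argument is precisely the one-line computation $\mathscr{L}_{Y_t}(\chi)=\mathbb{E}[e^{-\sigma_t\Psi(\chi)}]=e^{-t\Phi(\Psi(\chi))}$, with the subordinator properties declared ``clear from the definition.'' You simply supply the details the paper omits (the Fubini/conditioning step, right-continuity of the composition, and the distributional verification of the increment property via the Laplace-transform uniqueness corollary), all of which are sound.
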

\begin{proof}
The fact that $(Y_t)_{t\geq0}$ is a subordinator is clear from the definition. On the other hand, let us observe that $\mathscr{L}_{Y_t}(\chi)=\mathbb{E}[e^{-\sigma_t\cdot\Psi(\chi)}]=e^{-t\cdot\Phi(\Psi(\chi))},\;t\geq0$. Therefore, the Laplace exponent of $(Y_t)_{t\geq0}$ is given by $\Phi\circ\Psi$. 
\end{proof}

\section{Feller Semigroups on Feller Topological Monoids}\label{sec:FellSemOnTopFellMon}

\subsection{Feller Semigroups}
Our main references are Chapter 17 of Kallenberg's book \cite{MR4226142} and Chapter III of Revuz and Yor's book \cite{MR1725357}. Throughout this subsection, $E$ denotes a locally compact second countable Hausdorff space, and $C_0(E)$ is the Banach space of continuous functions $f:E\to\mathbb{R}$ vanishing at infinity, i.e. $\lim_{x\to\infty}f(x)=0$, equipped with the supremum norm $||f||=\sup_{x\in E}|f(x)|$.  A family $(T_t)_{t\geq0}$ of bounded linear operators on $C_0(E)$ is called a \textit{Feller semigroup} on $C_0(E)$ if $T_0=\text{Id}$, $||T_t||\leq1$ for every $t\geq0$, $T_{s+t}=T_sT_t$ for every $s,t\geq0$ and $\lim_{t\downarrow0}||T_tf-f||=0$ for all $f\in C_0(E)$. For any $\lambda>0$, the resolvent operator $R_\lambda$ is defined by \begin{align*}
    R_\lambda f=\int_{[0,\infty)}e^{-\lambda t}(T_tf)\,\text{d}t,\quad f\in C_0(E).
\end{align*} 
Now we recall the so-called \textit{Yosida approximation} which will be useful later on.
\begin{lemma}[Yosida Approximation]\label{YA} Let $(T_t)_{t\geq0}$ be a Feller semigroup defined on $C_0(E)$. Then \begin{align}
\lim_{\lambda\to\infty}T_t^{(\lambda)}f=T_tf,\quad f\in C_0(E),
\end{align}
where $\Big(T_t^{(\lambda)}\Big)_{t\geq0}$ is defined as $
    T_t^{(\lambda)}=e^{\lambda t(\lambda R_\lambda-I)}$.
\end{lemma}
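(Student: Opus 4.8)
The plan is to follow the classical Yosida approximation argument, recast in the present setting. Let $A$ denote the infinitesimal generator of $(T_t)_{t\geq0}$, with dense domain $D(A)\subset C_0(E)$, so that $R_\lambda=(\lambda I-A)^{-1}$ and the operator appearing in the exponent is exactly $\lambda(\lambda R_\lambda-I)=\lambda A R_\lambda=:A_\lambda$, a \emph{bounded} operator on $C_0(E)$. The first thing I would record is the contraction estimate $\|R_\lambda\|\leq 1/\lambda$, hence $\|\lambda R_\lambda\|\leq 1$; writing $A_\lambda=\lambda^2 R_\lambda-\lambda I$ this yields $\|T_t^{(\lambda)}\|=e^{-\lambda t}\|e^{t\lambda^2R_\lambda}\|\leq e^{-\lambda t}e^{t\lambda^2/\lambda}=1$, so each $T_t^{(\lambda)}=e^{tA_\lambda}$ is a genuine contraction semigroup. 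This uniform bound is what makes the final estimates go through.

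Next I would establish two auxiliary facts. First, $R_\lambda$ commutes with every $T_r$, since $R_\lambda T_r=\int_0^\infty e^{-\lambda s}T_{s+r}\,\mathrm{d}s=T_rR_\lambda$; consequently $A_\lambda$, and therefore $T_s^{(\lambda)}$, commutes with $T_r$. Second, $\lambda R_\lambda f\to f$ in $C_0(E)$ for every $f\in C_0(E)$: after the substitution $u=\lambda s$ one has $\lambda R_\lambda f-f=\int_0^\infty e^{-u}\big(T_{u/\lambda}f-f\big)\,\mathrm{d}u$, and strong continuity together with dominated convergence gives the claim. In particular, for $f\in D(A)$ this gives $A_\lambda f=\lambda R_\lambda(Af)\to Af$.

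The core of the argument is a fundamental-theorem-of-calculus identity, valid for $f\in D(A)$. Differentiating $s\mapsto T_{t-s}^{(\lambda)}T_sf$ on $[0,t]$ (using $T_sf\in D(A)$, $AT_sf=T_sAf$, and the commutation above) gives
\begin{align*}
T_tf-T_t^{(\lambda)}f=\int_0^t T_{t-s}^{(\lambda)}(A-A_\lambda)T_sf\,\mathrm{d}s,
\end{align*}
so that, by $\|T_{t-s}^{(\lambda)}\|\leq1$,
\begin{align*}
\big\|T_t^{(\lambda)}f-T_tf\big\|\leq\int_0^t\big\|(A-A_\lambda)T_sf\big\|\,\mathrm{d}s=\int_0^t\big\|(I-\lambda R_\lambda)T_s(Af)\big\|\,\mathrm{d}s.
\end{align*}
Here the integrand is dominated by $2\|Af\|$ uniformly in $s$ and $\lambda$. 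To pass to the limit I would use that the orbit $\{T_s(Af):s\in[0,t]\}$ is a compact subset of $C_0(E)$ (by strong continuity), on which the convergence $\lambda R_\lambda\to I$ is uniform; hence the integrand tends to $0$ uniformly in $s\in[0,t]$ and the integral vanishes as $\lambda\to\infty$. This settles the claim for $f\in D(A)$.

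Finally I would extend to arbitrary $f\in C_0(E)$ by density of $D(A)$: given $\varepsilon>0$ choose $g\in D(A)$ with $\|f-g\|<\varepsilon$, and use $\|T_t^{(\lambda)}\|\leq1$ and $\|T_t\|\leq1$ to obtain $\|T_t^{(\lambda)}f-T_tf\|\leq 2\varepsilon+\|T_t^{(\lambda)}g-T_tg\|$, whence $\limsup_{\lambda\to\infty}\|T_t^{(\lambda)}f-T_tf\|\leq2\varepsilon$; letting $\varepsilon\downarrow0$ concludes. I expect the main obstacle to be the uniform-in-$s$ control of $\|(I-\lambda R_\lambda)T_s(Af)\|$: the pointwise convergence $\lambda R_\lambda\to I$ is immediate, but upgrading it to uniformity over the orbit—via compactness of $\{T_s(Af):s\in[0,t]\}$—is the step that genuinely uses the Feller (strong continuity) hypothesis.
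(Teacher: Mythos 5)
Your proof is correct. Note, however, that the paper does not prove this lemma at all: it simply cites Kallenberg, Lemma 17.7, and your argument is precisely the classical textbook proof of the Yosida approximation (contractivity of $e^{tA_\lambda}$ via $\|\lambda R_\lambda\|\leq 1$, the identity $T_tf-T_t^{(\lambda)}f=\int_0^t T_{t-s}^{(\lambda)}(A-A_\lambda)T_sf\,\mathrm{d}s$ on the dense domain $D(A)$, and a $2\varepsilon$ extension). One small remark: the final limit interchange does not actually require uniformity over the compact orbit $\{T_s(Af):s\in[0,t]\}$; the pointwise convergence $(I-\lambda R_\lambda)T_s(Af)\to 0$ together with the uniform bound $2\|Af\|$ already yields the conclusion by dominated convergence in $s$.
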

For the proof of this result the reader can refer to the book of Kallenberg \cite[Lemma 17.7]{MR4226142}.
\subsection{Convolution Semigroups}
In order to study the Laplace transform of subordinators from a Feller semigroup approach, we briefly introduce some notions which are analogous to the classical ones existing for classical subordinators.

\begin{definition}[$\oplus$-Convolution]
Let $\mu$ and $\nu$ be two probability measures on $(\mathbb{M},\mathcal{B}(\mathbb{M}))$. The  $\oplus$-convolution of $\mu$ and $\nu$, denoted by $\mu \boldsymbol{\oplus}\nu$, is the probability measure on $(\mathbb{M},\mathcal{B}(\mathbb{M}))$ such that \begin{align*}
\int_{\mathbb{M}}f(z)\, \big(\mu \boldsymbol{\oplus}\nu\big)(dz)=\int_{\mathbb{M}\times\mathbb{M}} f(x\oplus y)\, \mu(dx)\nu(dy), \quad f\in C_0(\mathbb{M}).
\end{align*} 
\end{definition}
\begin{remark} Observe that $f\mapsto \int_{\mathbb{R}_+\times\mathbb{R}_+} f(x\oplus y)\, \mu(dx)\nu(dy)$ is a positive continuous linear functional on $C_0(\mathbb{M})$, so that the Riesz-Markov theorem guarantees the existence and uniqueness of the probability measure $\mu\boldsymbol{\oplus}\nu$.
\end{remark}

\begin{lemma}[Characterization of  the  $\oplus$-Convolution]\label{CEC}
For $\mu,\nu$ and $\lambda$ probability measures on $(\mathbb{M},\mathcal{B}(\mathbb{M}))$,  $\mu \boldsymbol{\oplus}\nu=\lambda$ if and only if \begin{align}\label{mvl}
    \int_\mathbb{M}\chi(x)\mu(\text{d}x)\cdot \int_\mathbb{M}\chi(x)\nu(\text{d}x) =\int_\mathbb{M}\chi(x)\lambda(\text{d}x),\quad \chi\in\widetilde{\mathbb{M}}.
\end{align}
\end{lemma}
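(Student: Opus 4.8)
The plan is to reduce both implications to the single multiplicative identity
$$\mathscr{L}(\mu\boldsymbol{\oplus}\nu)(\chi)=\mathscr{L}\mu(\chi)\cdot\mathscr{L}\nu(\chi),\qquad \chi\in\widetilde{\mathbb{M}}.$$
Once this identity is available, the forward implication is immediate: if $\mu\boldsymbol{\oplus}\nu=\lambda$, then $\mathscr{L}\lambda(\chi)=\mathscr{L}(\mu\boldsymbol{\oplus}\nu)(\chi)=\mathscr{L}\mu(\chi)\mathscr{L}\nu(\chi)$, which is exactly (\ref{mvl}). Conversely, if (\ref{mvl}) holds, the identity gives $\mathscr{L}(\mu\boldsymbol{\oplus}\nu)(\chi)=\mathscr{L}\lambda(\chi)$ for every $\chi\in\widetilde{\mathbb{M}}$; since $\mu\boldsymbol{\oplus}\nu$ and $\lambda$ are finite measures whose Laplace transforms agree on $\widetilde{\mathbb{M}}$, the uniqueness corollary for Laplace transforms forces $\mu\boldsymbol{\oplus}\nu=\lambda$. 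Thus the whole argument hinges on establishing the multiplicative identity.

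The natural way to obtain the identity is to take $f=\chi$ in the defining relation of the $\oplus$-convolution and to exploit that $\chi(x\oplus y)=\chi(x)\chi(y)$ together with Fubini's theorem for the product measure $\mu\otimes\nu$, yielding
$$\mathscr{L}(\mu\boldsymbol{\oplus}\nu)(\chi)=\int_{\mathbb{M}\times\mathbb{M}}\chi(x\oplus y)\,\mu(\mathrm{d}x)\nu(\mathrm{d}y)=\int_{\mathbb{M}}\chi\,\mathrm{d}\mu\cdot\int_{\mathbb{M}}\chi\,\mathrm{d}\nu.$$
The subtlety, and the only genuine step, is that the defining relation of $\mu\boldsymbol{\oplus}\nu$ is stated for $f\in C_0(\mathbb{M})$, whereas a character $\chi\in\widetilde{\mathbb{M}}$ need not be $\tau$-continuous (as Example \ref{ex:ind} shows), so $\chi\notin C_0(\mathbb{M})$ in general. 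Hence one cannot directly substitute $f=\chi$; this is the main obstacle.

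To circumvent it, I would extend the defining relation from $C_0(\mathbb{M})$ to all bounded $\mathcal{B}(\mathbb{M})$-measurable functions by a functional monotone class argument. Let $\mathcal{H}$ be the collection of bounded measurable functions $f$ satisfying $\int_{\mathbb{M}} f\,\mathrm{d}(\mu\boldsymbol{\oplus}\nu)=\int_{\mathbb{M}\times\mathbb{M}}f(x\oplus y)\,\mu(\mathrm{d}x)\nu(\mathrm{d}y)$. Because both sides are integrals against finite measures, $\mathcal{H}$ is a vector space containing the constants (here one uses that $\mu,\nu,\lambda$ are probability measures) and closed under bounded pointwise limits by dominated convergence. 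The algebra $C_0(\mathbb{M})$ is contained in $\mathcal{H}$, is closed under products, and generates $\mathcal{B}(\mathbb{M})$ since $(\mathbb{M},\tau)$ is a locally compact second countable Hausdorff space. The functional monotone class theorem then gives that $\mathcal{H}$ contains every bounded $\mathcal{B}(\mathbb{M})$-measurable function. Since each $\chi\in\widetilde{\mathbb{M}}$ is $[0,1]$-valued and $\mathcal{B}(\mathbb{M})$-measurable, it belongs to $\mathcal{H}$, which justifies the first equality of the displayed computation and hence the multiplicative identity. Combining this with the two reductions above completes the proof of both directions.
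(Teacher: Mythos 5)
Your proof is correct. The forward direction and the key technical step --- extending the defining identity of $\mu\boldsymbol{\oplus}\nu$ from $C_0(\mathbb{M})$ to all bounded Borel functions via the functional monotone class theorem, so that the possibly discontinuous characters $\chi\in\widetilde{\mathbb{M}}$ may be substituted and then factored using multiplicativity and Fubini --- are exactly what the paper does for sufficiency. You diverge on the converse: the paper runs the monotone class argument a second time, now seeding the class $\mathcal{H}$ (defined relative to $\lambda$) with $\widetilde{\mathbb{M}}$, which is closed under pointwise multiplication by hypothesis, and uses $\sigma(\widetilde{\mathbb{M}})=\mathcal{B}(\mathbb{M})$ from Proposition \ref{pro:SigMet} (i) to conclude that $\mathcal{H}$ contains $C_0(\mathbb{M})$, whence $\mu\boldsymbol{\oplus}\nu=\lambda$ by the definition of the convolution. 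You instead deduce that $\mathscr{L}(\mu\boldsymbol{\oplus}\nu)$ and $\mathscr{L}\lambda$ agree on $\widetilde{\mathbb{M}}$ and invoke the injectivity of the Laplace transform (the corollary to the full-submonoid theorem). Both routes are legitimate and both auxiliary results appear earlier in the paper; your version is slightly more economical in that the monotone class theorem is applied only once, while the paper's version is more self-contained in that its converse does not lean on the (unproved, Bourbaki-sourced) uniqueness theorem for Laplace transforms, only on the identification of $\sigma(\widetilde{\mathbb{M}})$ with the Borel $\sigma$-algebra.
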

\begin{proof}
Consider $\mathcal{H}:=\{f\in\mathcal{M}_b(\mathbb{M}): \int_{\mathbb{M}}f(z)\, \lambda(dz)=\int_{\mathbb{M}\times\mathbb{M}} f(x\oplus y)\, \mu(dx)\nu(dy)\}$, where $\mathcal{M}_b(\mathbb{M})$ denotes the space of bounded $\mathcal{B}(\mathbb{M})$-measurable functions from $\mathbb{M}$ into $\mathbb{R}$.
 
For sufficiency, suppose that $\mu \boldsymbol{\oplus}\nu=\lambda$. This implies that $C_0(\mathbb{M})\subseteq \mathcal{H}$. Since $C_0(\mathbb{M})$ is closed under pointwise multiplication then, by the functional version of the Monotone Class Theorem \ref{MCT}, $\mathcal{H}$ contains all the bounded $\sigma\big( C_0(\mathbb{M}) \big)$-measurable functions. Remember  that the Baire  $\sigma$-algebra $\sigma\big( C_0(\mathbb{M}) \big)$ coincides with the Borel sigma algebra $\mathcal{B}(\mathbb{M})$, this is due to the fact that $\mathbb{M}$ is locally compact, separable and metrizable. So, in particular, $\mathcal{H}$ contains the functions $\chi\in\widetilde{\mathbb{M}}$ and this clearly implies (\ref{mvl}).
 
To prove necessity, suppose that (\ref{mvl}) holds, then it follows that $\widetilde{\mathbb{M}}\subseteq\mathcal{H}$. By the same argument as before, we get that $\mathcal{H}$ contains all the bounded $\sigma(\widetilde{\mathbb{M}})$-measurable functions. By $(i)$ from Proposition \ref{pro:SigMet}, $\sigma(\widetilde{\mathbb{M}})=\mathcal{B}(\mathbb{M})$. In particular, $\mathcal{H}$ contains $C_0(\mathbb{M})$ so that $\mu \boldsymbol{\oplus}\nu=\lambda$.
\end{proof}

\begin{definition}[Convolution Semigroup of Measures]\label{CSM} An $\oplus$-{convolution semigroup of measures} is a family $(\mu_t)_{t\geq0}$ of probability measures on $(\mathbb{M},\mathcal{B}(\mathbb{M}))$ such that:
\begin{enumerate}
\item $\mu_0=\delta_{\boldsymbol{e}}$;
\item $\mu_{s}\boldsymbol{\oplus}\mu_t=\mu_{s+t}$;
\item $\mu_t\to\delta_{e}$, as $t\downarrow0$, vaguely.
\end{enumerate}
\end{definition}

\begin{remark}
Let $(X_t)_{t\geq0}$ be a subordinator. For every $t\geq0$, let $\mu_t$ be distribution of $X_t$. Then the family $(\mu_t)_{t\geq0}$ is a convolution semigroup of measures.
\end{remark}

The proof of the Lévy-Khintchine representation (\ref{eqn:Levy-Khin}) in the following theorem is based on the Yosida approximation just as the proof of Theorem 3.4.12 in \cite{MR3967735}. According to Applebaum (see the remark in \cite[p. 64]{MR3967735}) this approach is due to Zabczyk.

\begin{theorem}\label{thm:LeKhiRep}
Let $(\mu_t)_{t\geq0}$ be a family of probability measures on $(\mathbb{M},\mathcal{B}(\mathbb{M}))$. The following conditions are equivalent:

\begin{enumerate}[label=(\alph*)]
\item  The transition probabilities defined by  \begin{align}\label{PtxA}
P_{t}(x, A):=P_t 1_A(x)=\int_{\mathbb{M}} {1}_{A}(x\oplus y)\, \mu_{t}(d y),\quad A\in \mathcal{B}(\mathbb{M}), \, t\geq0,
\end{align}determine a Feller semigroup on $C_0(\mathbb{M})$, that is, the familiy $(P_t)_{t\geq0}$ of operators on $\mathcal{M}_b(\mathbb{M})$ defined by  $P_tf(x):=\int_{\mathbb{M}}f(x)P_t(x,\text{d}y),\, x\in\mathbb{M}$, is a Feller semigroup on $C_0(\mathbb{M})$.
\item The family $(\mu_t)_{t\geq0}$ is an $\oplus$-convolution semigroup of measures.
\end{enumerate}
Moreover, suppose that conditions \emph{\textbf{(I)}}, $(a)$ and/or $(b)$ are satisfied, then the following Lévy-Khintchine representation holds 
\begin{align}\label{eqn:Levy-Khin}
    \mathscr{L}_{\mu_t}(\chi)=\exp\left\{-t\left(\alpha(\chi)-\int_{\mathbb{M}\backslash\{ \boldsymbol{e}\}}\big(1-\chi(x)\big)\Pi(\text{d}x)\right)\right\},\quad \chi\in\widetilde{\mathbb{M}},\;t\geq0.
\end{align}
for some measure $\Pi$ on $(\mathbb{M}\backslash\{ \boldsymbol{e}\},\mathcal{B}(\mathbb{M\backslash\{ \boldsymbol{e}\}}))$.
\end{theorem}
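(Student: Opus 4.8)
The plan is to prove the equivalence $(a)\Leftrightarrow(b)$ first and then derive \eqref{eqn:Levy-Khin} as the analytic core. For $(b)\Rightarrow(a)$ I would check the four Feller axioms directly against Definition \ref{CSM}: $P_0=\mathrm{Id}$ follows from $\mu_0=\delta_{\boldsymbol e}$; the contraction bound $\|P_t\|\le1$ from each $\mu_t$ being a probability measure; the semigroup law $P_{s+t}=P_sP_t$ from $\mu_s\boldsymbol\oplus\mu_t=\mu_{s+t}$ together with Fubini, which is exactly Lemma \ref{CEC}; that $P_t$ preserves $C_0(\mathbb M)$ from continuity of $\oplus$, dominated convergence, and Lemma \ref{lem:property}(iii) (so that $x\oplus y\to\partial_\infty$ as $x\to\partial_\infty$); and strong continuity $\|P_tf-f\|\to0$ from the vague convergence $\mu_t\to\delta_{\boldsymbol e}$ and a standard uniformity argument. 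The converse $(a)\Rightarrow(b)$ reads the three items of Definition \ref{CSM} off the semigroup axioms and \eqref{PtxA}. The one computation that drives everything afterward is that each $\chi\in\widetilde{\mathbb M}$ is an eigenfunction of $P_t$: since $\chi(x\oplus y)=\chi(x)\chi(y)$, one has $P_t\chi(x)=\chi(x)\,\mathscr L_{\mu_t}(\chi)=e^{-t\Psi(\chi)}\chi(x)$.

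For the representation I would run the Yosida approximation of Lemma \ref{YA}. Let $\rho_\lambda(\mathrm dy):=\lambda\int_0^\infty e^{-\lambda t}\mu_t(\mathrm dy)\,\mathrm dt$ denote the $\lambda$-potential measure, which is again a probability measure and satisfies $\lambda R_\lambda f(x)=\int_{\mathbb M}f(x\oplus y)\,\rho_\lambda(\mathrm dy)$. Consequently the bounded generator $\mathcal A_\lambda:=\lambda(\lambda R_\lambda-I)$ of $T_t^{(\lambda)}=e^{t\mathcal A_\lambda}$ is a pure-jump operator,
\[
\mathcal A_\lambda f(x)=\int_{\mathbb M\setminus\{\boldsymbol e\}}\big(f(x\oplus y)-f(x)\big)\,\Pi_\lambda(\mathrm dy),\qquad \Pi_\lambda:=\lambda\,\rho_\lambda\big|_{\mathbb M\setminus\{\boldsymbol e\}},
\]
the atom at $\boldsymbol e$ disappearing because $f(x\oplus\boldsymbol e)=f(x)$. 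Applying $\mathcal A_\lambda$ to a character gives $\mathcal A_\lambda\chi=-\Psi_\lambda(\chi)\chi$ with $\Psi_\lambda(\chi)=\int_{\mathbb M\setminus\{\boldsymbol e\}}(1-\chi(y))\,\Pi_\lambda(\mathrm dy)=\lambda\big(1-\mathscr L_{\rho_\lambda}(\chi)\big)$, and since $\mathscr L_{\rho_\lambda}(\chi)=\lambda/(\lambda+\Psi(\chi))$ this is the explicit monotone convergence $\Psi_\lambda(\chi)=\lambda\Psi(\chi)/(\lambda+\Psi(\chi))\uparrow\Psi(\chi)$ as $\lambda\to\infty$. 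Everything therefore reduces to identifying the limit of the measures $\Pi_\lambda$.

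This is where condition \textbf{(I)} enters. I would weight by $\varphi_{\widetilde{\mathbb M}}$ and work on the compact metric space $(\mathbb M^\ast,\rho_{\widetilde{\mathbb M}})$ of Proposition \ref{pro:SigMet}. The finite measures $\varphi_{\widetilde{\mathbb M}}\Pi_\lambda$ have total mass $\int\varphi_{\widetilde{\mathbb M}}\,\mathrm d\Pi_\lambda=\sum_n 2^{-n}\Psi_\lambda(\chi_n)$, increasing to $\sum_n 2^{-n}\Psi(\chi_n)$, and condition \textbf{(I)} (via the fixed ordering of $\widetilde{\mathbb M}$) is what forces this supremum to be finite, so the family is tight on $\mathbb M^\ast$. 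Writing $\varphi_{\widetilde{\mathbb M}}=\sum_n 2^{-n}(1-\chi_n)$, each integral $\int\chi\,\mathrm d(\varphi_{\widetilde{\mathbb M}}\Pi_\lambda)$ becomes a series of convergent differences $\Psi_\lambda(\chi\chi_n)-\Psi_\lambda(\chi)$; since $\{1\}\cup\widetilde{\mathbb M}$ is a point-separating algebra (Lemma \ref{lem:property}), dense in $C(\mathbb M^\ast)$ by Stone--Weierstrass, the $\varphi_{\widetilde{\mathbb M}}\Pi_\lambda$ converge weakly to a single measure $\nu$. Conservativeness of the semigroup forbids mass of $\nu$ at $\partial_\infty$, and since $\varphi_{\widetilde{\mathbb M}}>0$ off $\boldsymbol e$ we may split $\nu=\texttt d\,\delta_{\boldsymbol e}+\varphi_{\widetilde{\mathbb M}}\,\Pi$ on $\mathbb M$, defining the drift $\texttt d:=\nu(\{\boldsymbol e\})$ and the measure $\Pi$ on $\mathbb M\setminus\{\boldsymbol e\}$. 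Condition \textbf{(I)} also ensures that $g_\chi:=(1-\chi)/\varphi_{\widetilde{\mathbb M}}$, which is $\rho_{\widetilde{\mathbb M}}$-continuous off $\boldsymbol e$, extends to an element of $C(\mathbb M^\ast)$ with $g_\chi(\boldsymbol e)=\alpha_{\widetilde{\mathbb M}}(\chi)$; passing to the weak limit in $\Psi_\lambda(\chi)=\int g_\chi\,\mathrm d(\varphi_{\widetilde{\mathbb M}}\Pi_\lambda)$ yields
\[
\Psi(\chi)=\int_{\mathbb M^\ast}g_\chi\,\mathrm d\nu=\alpha_{\widetilde{\mathbb M}}(\chi)\,\texttt d+\int_{\mathbb M\setminus\{\boldsymbol e\}}\big(1-\chi(x)\big)\,\Pi(\mathrm dx),
\]
which is the Lévy--Khintchine exponent of \eqref{eqn:Levy-Khin} with $\alpha(\chi)=\texttt d\,\alpha_{\widetilde{\mathbb M}}(\chi)$.

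The main obstacle will be the limit extraction of the previous paragraph rather than any single algebraic identity. Three points need care: proving $\sum_n 2^{-n}\Psi(\chi_n)<\infty$ so the weighted measures remain tight, where the interplay between condition \textbf{(I)} and the chosen ordering of $\widetilde{\mathbb M}$ is essential; upgrading subsequential limits to a genuine weak limit $\nu$ on $\mathbb M^\ast$ while ruling out any escape of mass to $\partial_\infty$ through conservativeness; and checking that $g_\chi$ is a bona fide element of $C(\mathbb M^\ast)$, whose only delicate feature---continuity at $\boldsymbol e$---is exactly the content of condition \textbf{(I)}. Once these are in place the representation follows, and uniqueness of $\Pi$ is inherited from the injectivity of the Laplace transform established in the preceding subsection.
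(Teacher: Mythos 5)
Your proposal is correct and follows essentially the same route as the paper: the equivalence $(a)\Leftrightarrow(b)$ is checked directly through the convolution identity and the eigenfunction relation $P_t\chi=e^{-t\Psi(\chi)}\chi$, and the representation is obtained via the Yosida approximation, the potential measures $\lambda\rho_\lambda$ reweighted by $\varphi_{\widetilde{\mathbb{M}}}$, weak convergence on the compact space $(\mathbb{M}^\ast,\rho_{\widetilde{\mathbb{M}}})$, and condition \textbf{(I)} to extend $(1-\chi(\cdot))/\varphi_{\widetilde{\mathbb{M}}}(\cdot)$ continuously to $\boldsymbol{e}$. You are in fact more explicit than the paper about the limit extraction (uniform boundedness and tightness of the measures $\varphi_{\widetilde{\mathbb{M}}}\Pi_\lambda$, Stone--Weierstrass to get a single weak limit, and the treatment of possible atoms at $\boldsymbol{e}$ and $\partial_\infty$), steps the paper's proof largely asserts without detail.
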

\begin{proof}
We begin the proof with some preliminary observations. An application of the dominated convergence theorem guarantees that $P_t\big( C_b(\mathbb{M})\big)\subseteq C_b(\mathbb{M})$ for all $t\geq0$, where $C_b(\mathbb{M})$ denotes the space of bounded continuous functions from $(\mathbb{M},\tau)$ into $\mathbb{R}$. Indeed, $x\mapsto x\oplus y$ is a continuous function and if $x_n\to x$ in $\mathbb{M}$ as $n\to\infty$, then \begin{align*}
\lim\limits_{n\to\infty}P_tf(x_n)=\int_{\mathbb{M}}\lim\limits_{n\to\infty}f(x_n\oplus y)\,\mu_t({d}y)=\int_{\mathbb{M}}f(x\oplus y)\,\mu_t({d}y)=P_tf(x),
\end{align*}
Now, if $f\in C_{0}(\mathbb{M})$, once again the dominated convergence theorem and part $(iii)$ from Lemma \ref{lem:property} assert \begin{align*}
\lim\limits_{x\to \partial_\infty}P_tf(x)=\int_{\mathbb{M}}\lim\limits_{x\to \partial_\infty}f(x\oplus y)\mu_{t}({d}y)=0,\quad f\in C_0(\mathbb{M}),
\end{align*}
so that $P_t\big( C_0(\mathbb{M})\big)\subseteq C_0(\mathbb{M})$, for all $t\geq0$.

Let us start by verifying the equivalence of conditions $(a)$ and $(b)$.  Given $f\in C_0(\mathbb{M})$ and $x\in\mathbb{M}$, denote as $f_x$ the mapping from $\mathbb{M}$ into $\mathbb{R}$ defined  by $f_x(y):=f(x\oplus y)$, $y\in\mathbb{M}$. We have further $f_x\in C_0(\mathbb{M})$. We can now observe that
 \begin{align}\label{Ps+t}
P_{s+t}f(x)=\int_{\mathbb{M}}f(x\oplus y)\mu_{s+t}(dy)=\int_{\mathbb{M}}f_x( y)\mu_{s+t}(dy),\quad s,t\geq0.
\end{align}
Moreover, for every $s,t\geq0$,
\begin{align}\label{PsPt}
\begin{split}
P_tP_sf(x)&=\int_{\mathbb{M}}P_sf(x\oplus z)\,\mu_t(\text{d}z)\\
&=\int_{\mathbb{M}\times \mathbb{M}} f (y\oplus z\oplus x)\mu_{t}(dz)\mu_s(dy)\\
&=\int_{\mathbb{M}\times \mathbb{M}} f_x (y\oplus z)\mu_{t}(dz)\mu_s(dy) .
\end{split}
\end{align}
Then the semigroup property of $(P_t)_{t\geq0}$ is read as
\begin{align}
\int_{\mathbb{M}}f_x( y)\mu_{s+t}(dy)=\int_{\mathbb{M}\times \mathbb{M}} f_x (y\oplus z)\,\mu_{t}(dz)\mu_s(dy),\quad x\in \mathbb{M}, \; f\in C_0(\mathbb{M}),
\end{align}
and this is equivalent to  $\mu_t\boldsymbol{\oplus} \mu_s=\mu_{s+t},\;s,t\geq0$.
Recall that $\lim _{t \downarrow 0}\left\|P_{t} f-f\right\|=0$ for each $ f \in C_{0}(\mathbb{M})$ is equivalent to $\lim _{t \downarrow 0} P_{t} f(x)=f(x),\; x\in\mathbb{M},\, f\in C_0(\mathbb{M})$ (see for example \cite[p. 89]{MR1725357}), but this is condition $2$ of Definition \ref{CSM}. Thus, we have verified that condition $(a)$ holds if and only if condition $(b)$ is true.

To prove the last statement of the result, we need the following lemma. Let us recall that $\widetilde{C}(\mathbb{M})$ is the space of all real-valued continuous functions on $(\mathbb{M},\tau_{\widetilde{\mathbb{M}}})$ (see \eqref{eqn:rhometric}).

\begin{lemma}
If $(P_t)_{t\geq0}$ is a Feller semigroup on $C_0(\mathbb{M})$, then $(P_t)_{t\geq0}$ is a $C_0$-semigroup on $\widetilde{C}(\mathbb{M})$.
\end{lemma}
\begin{proof}
Since for every $t\geq0$ we have $P_t\chi(\boldsymbol{e})=\int_{\mathbb{M}}\chi(e\oplus y)\,\mu_t{(\text{d}y)}=\int_{\mathbb{M}}\chi(y)\,\mu_t{(\text{d}y)}$, $\chi\in\widetilde{\mathbb{M}}$, we obtain
\begin{align}\label{PtChi}
    P_t\chi(x)=\int_{\mathbb{M}}\chi(x\oplus y)\,\mu_t(\text{d}y)=\chi(x)\cdot \int_{\mathbb{M}}\chi(y)\,\mu_t(\text{d}y)=\chi(x)\cdot P_t\chi(\boldsymbol{e}).
\end{align}
On the other hand, consider \begin{align*}
\mathcal{H}_1&:=\{f\in\mathcal{M}_b(\mathbb{M}): P_tP_sf=P_{s+t}f,\text{ for any }s,t\geq0 \}.    
\end{align*} 
If $(P_t)_{t\geq0}$ is a Feller semigroup on $C_0(\mathbb{M})$ then $C_0(\mathbb{M})\subseteq\mathcal{H}_1$. By the same argument as in Lemma \ref{mvl}, we get $\mathcal{H}_1=\mathcal{M}_b(\mathbb{M})$. In particular, by (\ref{PsPt}) and (\ref{PtChi}), we get \begin{align*}
    P_{s+t}\chi(\boldsymbol{e})&=\int_{\mathbb{M}}\chi(x)\,\mu_{s+t}(\text{d}x),\\
    P_sP_t\chi(\boldsymbol{e})&=\int_{\mathbb{M}}\chi(x)\,\mu_t(\text{d}x)\cdot\int_{\mathbb{M}}\chi(x)\,\mu_s(\text{d}x)=P_t\chi(\boldsymbol{e})\cdot P_s\chi(\boldsymbol{e}),
\end{align*}
so that the semigroup property implies that \begin{align}
    P_{s+t}\chi(\boldsymbol{e})=P_t\chi(\boldsymbol{e})\cdot P_s\chi(\boldsymbol{e}),\quad s,t\geq0.
\end{align}
Therefore $t\mapsto P_{t}\chi(\boldsymbol{e})$ is non-increasing, and then \begin{align}\label{PtChiet}
    P_t\chi(\boldsymbol{e})=\Big(P_1\chi(\boldsymbol{e})\Big)^t,\quad t\geq0.
\end{align}
Now, let
\begin{align*}
    \mathcal{H}_2&:=\{f\in\mathcal{M}_b(\mathbb{M}): P_tf\uparrow f,\text{ as }t\downarrow0 \}.    
\end{align*}
By (\ref{PtChi}) and (\ref{PtChiet}, we get 
\begin{align*}
    P_t\chi(x)=\chi(x)\cdot \left(P_1\chi(\boldsymbol{e})\right)^t.
\end{align*}
Then $\widetilde{\mathbb{M}}\subseteq\mathcal{H}_2$ which implies that $\mathcal{H}_2=\mathcal{M}_b(\mathbb{M})$. In particular, $\widetilde{C}(\mathbb{M})\subseteq\mathcal{H}_2$, that is \begin{align*}
    \lim_{t\downarrow0}P_tf(x)\uparrow f(x),\quad x\in\mathbb{M},\;f\in\widetilde{C}(\mathbb{M}).
\end{align*}
But, by part $(ii)$ in Proposition \ref{pro:SigMet}, the space $(\mathbb{M}^*,\rho_{\widetilde{\mathbb{M}}})$ is compact and therefore, by Dini's Theorem, the preceding convergence is uniform for any $f\in\widetilde{C}(\mathbb{M})$.
\end{proof}

We are now in position to prove the Lévy-Khintchine representation. Let us first observe that, for any $f\in\widetilde{C}(\mathbb{M})$, we have for $\lambda>0$ \begin{align*}
    \lambda R_\lambda f(\boldsymbol{e})&=\lambda\int_{[0,\infty)}e^{-\lambda t}P_tf(\boldsymbol{e})\,\text{d}t\\
    &=\lambda\int_{[0,\infty)}e^{-\lambda t}\int_{\mathbb{M}}f( y)\,\mu_t(\text{d}y)\,\text{d}t\\
    &=\int_{\mathbb{M}}f(  y)\,\pi_\lambda(\text{d}y),
\end{align*} 
where $\pi_{\lambda}$ is the probability measure defined by $\pi_{\lambda}(\text{d}y)=\lambda\int_{[0,+\infty)}e^{-\lambda t}\mu_t(\text{d}y)\,\text{d}t$.

Now let $\widetilde{\varphi}\in\widetilde{C}(\mathbb{M})$ defined by $\widetilde{\varphi}(x)=\sum_{n\geq1}\chi_n(x)/2^n,$ $x\in\mathbb{M}$. Since $(P_t)_{t\geq0}$ is a $C_0$-semigroup on $\widetilde{C}(\mathbb{M})$, by the Yosida Approximation (Lemma \ref{YA}) we have \begin{align*}
   \int_{\mathbb{M}}\widetilde{\varphi}(x)\mu_t(\text{d}x)&= P_t\widetilde{\varphi}(\boldsymbol{e})\\
    &=\lim_{\lambda\to\infty}P_t^{(\lambda)}\widetilde{\varphi}(\boldsymbol{e})\\
    &=\lim_{\lambda\to\infty}e^{\lambda(\lambda R_\lambda-I)}\widetilde{\varphi}(\boldsymbol{e})\\
    &=\lim_{\lambda\to\infty}\exp\left\{\lambda\left(\lambda R_\lambda\widetilde{\varphi}(\boldsymbol{e})- \widetilde{\varphi}(\boldsymbol{e})\right)\right\}\\
    &=\lim_{\lambda\to\infty}\exp\left\{\lambda\left(\int_{\mathbb{M}}\widetilde{\varphi}(y)\pi_{\lambda}(\text{d}y)- 1\right)\right\}\\
    &=\lim_{\lambda\to\infty}\exp\left\{-t\left(\int_{\mathbb{M}}\big( 1-\widetilde{\varphi}(y)\big) \, \lambda\pi_{\lambda}(\text{d}y)\right)\right\}\\
    &=\lim_{\lambda\to\infty}\exp\left\{-t\left(\int_{\mathbb{M}}\varphi_{\widetilde{\mathbb{M}}}(y) \, \widetilde{\pi}_\lambda(\text{d}y)\right)\right\}.
\end{align*}
Analogously, we have \begin{align*}
    \int_{\mathbb{M}}\chi(x)\,\mu_{t}(\text{d}x)&=\lim_{\lambda\to\infty}\exp\left\{-t\left(\int_{\mathbb{M}}\big( 1-\chi(y)\big) \, \widetilde{\pi}_{\lambda}(\text{d}y)\right)\right\}\\
    &=\lim_{\lambda\to\infty}\exp\left\{-t\left(\int_{\mathbb{M}}\frac{1-\chi(y)}{\varphi_{\widetilde{\mathbb{M}}}(y)}\, \widetilde{\Pi}_{\lambda}(\text{d}y)\right)\right\}\\
    &=\exp\left\{-t\left(\int_{\mathbb{M}\cup{\partial_\infty}}\alpha(\chi)1_{\{y=\boldsymbol{e}\}}+\frac{1-\chi(y)}{\varphi_{\widetilde{\mathbb{M}}}(y)}\, \widetilde{\Pi}(\text{d}y)\right)\right\}\\
    &=\exp\left\{-t\alpha(\chi)/ \widetilde{\Pi}(\{\boldsymbol{e}\})-t \left(\int_{\mathbb{M}\backslash\{\boldsymbol{e}\}}\big(1-\chi(y)\big)\, {\Pi}(\text{d}y)\right)-t\cdot\Pi(\{\partial_\infty\})\right\},
\end{align*}
where $\widetilde{\Pi}_{\lambda}(\text{d}y)=\varphi_{\widetilde{\mathbb{M}}}(y)\widetilde{\pi}_{\lambda}(\text{d}y)$.
\end{proof}



\subsection{Invariance Principle}
In this subsection, we suppose that the Feller topological monoid $\mathbb{M}$ possesses a left semigroup action, namely $
    \cdot:(0,+\infty)\times\mathbb{M}\to\mathbb{M}$,
which satisfies $r\cdot (x\oplus y)=rx\oplus rx$ for all $r\in(0,+\infty)$, $x,y\in\mathbb{M}$.
\begin{theorem}
Let $\left\{\zeta_{n}, n \geq 1\right\}$ be a sequence of i.i.d. random $\mathbb{M}$-valued elements such that \begin{align*}
b_{n}^{-1}\left(\zeta_{1}\oplus\cdots\oplus\zeta_{n}\right) \Rightarrow \xi,
\end{align*}
where $\Rightarrow$ denotes the weak convergence on $(\mathbb{M},\tau_{\widetilde{\mathbb{M}}})$, and ${b_n,n\geq1}$ is a sequence of positive constants. Now consider the Markov chains defined by \begin{align*}
\left(X_k^{(n)}(x):=\frac{( b_n\cdot x)\oplus\left(\bigoplus_{i=1}^k\zeta_i \right)}{b_n}\right)_{k\geq0},\quad n\geq0.
\end{align*}
Then, the continuous time stochastic processes $\{(X_{t}^{(n)},t\geq0):=(X^{(n)}_{\lfloor n t \rfloor}(\boldsymbol{e}),t\geq0),\, n\geq1\}$ converge on the $J_1$ Skorokhod space $\mathcal{D}(\mathbb{R}_+,(\mathbb{M},\rho_{\widetilde{\mathbb{M}}}))$ to the subordinator $(X_t)_{t\geq0}$ such that $X_1\overset{d}{=}\xi$.
\end{theorem}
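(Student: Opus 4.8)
The plan is to establish the two pillars of any functional limit theorem---convergence of the finite-dimensional distributions and tightness on the Skorokhod space---and then to identify the limit via the characterization of weak convergence through characters (Lemma \ref{lem:weakconv}). Writing $S_k\coloneqq\zeta_1\oplus\cdots\oplus\zeta_k$ (with $S_0=\boldsymbol e$), the distributivity $r\cdot(x\oplus y)=rx\oplus ry$ together with $1\cdot\boldsymbol e=\boldsymbol e$ gives, for every $\chi\in\widetilde{\mathbb M}$, the identity $\chi(X^{(n)}_t)=\chi\big(b_n^{-1}\cdot S_{\lfloor nt\rfloor}\big)$; since all of our estimates (the metric $\rho_{\widetilde{\mathbb M}}$ and the Laplace transforms) are expressed through characters, I may work throughout with the scaled random walk $b_n^{-1}\cdot S_{\lfloor nt\rfloor}$, which has independent and stationary increments.

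\textbf{Finite-dimensional distributions.} The crucial simplification is that each $\chi$ is a monoid homomorphism, so $\chi\big(b_n^{-1}\cdot S_k\big)=\prod_{i=1}^k\chi\big(b_n^{-1}\cdot\zeta_i\big)$ and hence, by the i.i.d.\ assumption, $\mathbb E\big[\chi(b_n^{-1}\cdot S_k)\big]=a_n(\chi)^k$ with $a_n(\chi)\coloneqq\mathbb E\big[\chi(b_n^{-1}\cdot\zeta_1)\big]$. The hypothesis $b_n^{-1}\cdot S_n\Rightarrow\xi$ yields $a_n(\chi)^n\to\mathscr L_\xi(\chi)$; setting $\Psi(\chi)\coloneqq-\log\mathscr L_\xi(\chi)\in[0,+\infty]$ this forces $a_n(\chi)\to1$ and $n\big(1-a_n(\chi)\big)\to\Psi(\chi)$, whence $\mathbb E\big[\chi(X^{(n)}_t)\big]=a_n(\chi)^{\lfloor nt\rfloor}\to e^{-t\Psi(\chi)}$. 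For a joint transform at $0\le t_1<\cdots<t_m$ I regroup the product $\prod_{j}\chi_j(X^{(n)}_{t_j})$ over the independent increment blocks, obtaining $\prod_{l}\mathbb E\big[\Xi_l(b_n^{-1}\cdot S_{\lfloor nt_l\rfloor-\lfloor nt_{l-1}\rfloor})\big]$ with $\Xi_l\coloneqq\chi_l\chi_{l+1}\cdots\chi_m\in\widetilde{\mathbb M}$; each factor converges to $e^{-(t_l-t_{l-1})\Psi(\Xi_l)}$, reproducing exactly the expression \eqref{eqn:FDD}. By Lemma \ref{lem:weakconv} the finite-dimensional laws converge to those of a subordinator with Laplace exponent $\Psi$ and $X_1\overset d=\xi$; stochastic continuity (so that the limit has no fixed discontinuities) follows from $e^{-h\Psi(\chi)}\to1$ as $h\downarrow0$.

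\textbf{Tightness.} Here the compactness of $(\mathbb M^*,\rho_{\widetilde{\mathbb M}})$ (Proposition \ref{pro:SigMet}(ii)) is decisive: the compact-containment condition of Aldous's criterion (see \cite{MR4226142}) is automatic, so I only need the oscillation estimate. Fix a bounded stopping time $\tau$ for the walk and $\delta>0$; by the strong Markov property the increment $\Delta_n\coloneqq b_n^{-1}\cdot\big(\zeta_{\lfloor n\tau\rfloor+1}\oplus\cdots\oplus\zeta_{\lfloor n(\tau+\delta)\rfloor}\big)$ is independent of $\mathcal F_\tau$ and built from at most $\lfloor n\delta\rfloor+1$ i.i.d.\ terms. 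Since $\chi(x\oplus y)=\chi(x)\chi(y)\le\chi(x)$, one has $\rho_{\widetilde{\mathbb M}}\big(X^{(n)}_{\tau+\delta},X^{(n)}_\tau\big)\le\sum_{k\ge1}2^{-k}\big(1-\chi_k(\Delta_n)\big)=\varphi_{\widetilde{\mathbb M}}(\Delta_n)$, and taking expectations, $\mathbb E\big[\varphi_{\widetilde{\mathbb M}}(\Delta_n)\big]\le\sum_{k\ge1}2^{-k}\big(1-a_n(\chi_k)^{\lfloor n\delta\rfloor+1}\big)$, a bound uniform in $\tau$. Letting $n\to\infty$ and then $\delta\to0$ this upper bound tends to $\sum_{k\ge1}2^{-k}\big(1-e^{-\delta\Psi(\chi_k)}\big)\to0$, so Markov's inequality verifies Aldous's condition and the family $(X^{(n)})_n$ is tight in $\mathcal D(\mathbb R_+,(\mathbb M^*,\rho_{\widetilde{\mathbb M}}))$.

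\textbf{Conclusion and main obstacle.} Tightness together with convergence of the finite-dimensional distributions at every finite set of times (legitimate since the limit has no fixed discontinuities) yields convergence in the $J_1$ topology to the subordinator identified above. Because $\xi\in\mathbb M$ we have $\mathscr L_\xi\not\equiv0$, so $\Psi(\chi)<\infty$ for some $\chi$ and $\mathbb E[\chi(X_t)]=e^{-t\Psi(\chi)}>0$; by Definition \ref{FC}(i) the limit never reaches $\partial_\infty$, so the convergence in fact takes place in $\mathcal D(\mathbb R_+,(\mathbb M,\rho_{\widetilde{\mathbb M}}))$, and $X_1\overset d=\xi$. I expect the main difficulty to lie in the tightness step: making the Aldous oscillation estimate genuinely uniform over all walk stopping times while controlling the floor-function discrepancies in the number of summands, and in the final bookkeeping that guarantees the limit process does not escape to $\partial_\infty$ through an accumulation of small jumps.
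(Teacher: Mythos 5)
Your argument is essentially correct, but it follows a genuinely different route from the paper. The paper does not establish tightness by hand: it invokes Kallenberg's convergence theorem for Feller processes (Theorem 17.28 in \cite{MR4226142}), which reduces the whole statement to the uniform convergence $\sup_{x\in\mathbb{M}}\left|P_t^{(n)}f(x)-P_tf(x)\right|\to0$ of the associated semigroups. The multiplicativity $\chi(x\oplus y)=\chi(x)\chi(y)$ makes this supremum collapse to $\left|\int\chi\,d\mu_t^{(n)}-\int\chi\,d\mu_t\right|$ for $f=\chi\in\widetilde{\mathbb{M}}$, and Stone--Weierstrass extends it to all of $\widetilde{C}_0(\mathbb{M})$; tightness and path-space convergence then come for free from the general theory. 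Your proof replaces that black box with the classical two-step scheme: the finite-dimensional computation (which indeed reproduces \eqref{eqn:FDD} and is the same calculation the paper performs for the one-dimensional marginals) plus an explicit Aldous oscillation estimate, where the compactness of $(\mathbb{M}^*,\rho_{\widetilde{\mathbb{M}}})$ from Proposition \ref{pro:SigMet}(ii) disposes of compact containment and the bound $\rho_{\widetilde{\mathbb{M}}}(x\oplus y,x)\le\varphi_{\widetilde{\mathbb{M}}}(y)$ controls the increments. Your version is longer but more self-contained and makes visible exactly where each structural hypothesis enters; the paper's is shorter but leans on the full Feller machinery developed in Section \ref{sec:FellSemOnTopFellMon}.

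Two caveats. First, your tightness bound $\sum_{k\ge1}2^{-k}\bigl(1-e^{-\delta\Psi(\chi_k)}\bigr)$ fails to vanish as $\delta\downarrow0$ if $\Psi(\chi_k)=\infty$ for some $k$ (i.e.\ $\mathscr{L}_\xi(\chi_k)=0$, which your final paragraph does not rule out for \emph{every} character); in that case, however, $\mu_t\not\to\delta_{\boldsymbol e}$ and the limit cannot be a subordinator at all, so this is an implicit finiteness assumption shared with the paper's proof rather than a defect specific to yours. Second, Lemma \ref{lem:weakconv} as stated only converts convergence of one-dimensional Laplace transforms into weak convergence on $\mathbb{M}$; to pass from your joint character transforms to weak convergence of the finite-dimensional laws on $\mathbb{M}^m$ you need the (routine but unstated) observation that the product characters form a determining class for the product Feller topological monoid.
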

\begin{proof}
Denote as $\mu_t^{(n)}$ and $\mu_t$ the measures induced by the random elements $b_{n}^{-1}\left(\zeta_{1}\oplus\cdots\oplus\zeta_{{\lfloor nt \rfloor }}\right)$ and $X_t$ respectively. The Feller semigroups of $(X_t^{(n)},t\geq0)$ and $(X_t)_{t\geq0}$ are given by $P_t^{(n)}f(x)=\int_{\mathbb{M}}f(x\oplus y)\mu_t^{(n)}(\text{d}y)$ and $P_tf(x)=\int_{\mathbb{M}}f(x\oplus y)\mu_t(\text{d}y)$ respectively. By Theorem 17.28 in \cite{MR4226142} it is enough to prove $
    \lim_{n\to\infty}\sup_{x\in\mathbb{M}}\left|P_t^{n}f(x)-P_tf(x)\right|=0$ for each $f\in\widetilde{C}_0(\mathbb{M})$, where $\widetilde{C}_0(\mathbb{M})$ is the class of real-valued continuous functions on $(\mathbb{M},\rho_{\widetilde{\mathbb{M}}})$ vanishing at infinity.

Observe that if $b_{n}^{-1}\left(\zeta_{1}\oplus\cdots\oplus\zeta_{n}\right) \Rightarrow \xi$, where $\xi$ is such that $\mathscr{L}_\xi(\chi)=\exp\{-\Psi(\chi)\}$, then
\begin{align*}
    \lim_{n\to\infty}\left(\mathscr{L}_{ b_n^{-1}\cdot\zeta_1}(\chi)\right)^n=e^{-\Psi(\chi)}, \quad\chi\in\widetilde{\mathbb{M}},
\end{align*}
since $\widetilde{\mathbb{M}}\subseteq\widetilde{C}(\mathbb{M})$. Therefore, for every $t\geq0$,
\begin{align*}
    \lim_{n\to\infty}\left(\mathscr{L}_{ b_n^{-1}\cdot\zeta_1}(\chi)\right)^{\lfloor  nt\rfloor}=e^{-t\Psi(\chi)},\quad \chi\in\widetilde{\mathbb{M}},
\end{align*}
which, by Lemma \ref{lem:weakconv}, implies that
\begin{align*}
    b_{n}^{-1}\left(\zeta_{1}\oplus\cdots\oplus\zeta_{{\lfloor nt \rfloor }}\right) \Rightarrow X_t
\end{align*}
on $(\mathbb{M},\tau_{\widetilde{\mathbb{M}}})$ since  $\mathscr{L}_{X_t}(\chi)=\exp\{-t\Psi(\chi)\}$. Then, it follows that \begin{align*}
    \lim\limits_{n\to\infty}\sup_{x\in\mathbb{M}}\left|\int_{\mathbb{M}}\chi(x\oplus y)\mu_t^{(n)}(\text{d}y)-\int_{\mathbb{M}}\chi(x\oplus y)\mu_t(\text{d}y)\right|=0,\quad \chi\in\widetilde{\mathbb{M}}.
\end{align*}
By the Stone–Weierstrass theorem, we finally obtain\begin{align*}
    \lim\limits_{n\to\infty}\sup_{x\in\mathbb{M}}\left|\int_{\mathbb{M}}f(x\oplus y)\mu_t^{(n)}(\text{d}y)- \int_{\mathbb{M}}f(x\oplus y)\mu_t(\text{d}y)\right|=0,\quad f\in \widetilde{C}_0(\mathbb{M}).
\end{align*}
\end{proof}

\section{Lévy-Itô Decomposition of Subordinators on Feller Topological Monoids}\label{sec:LevyItoDec}
Let $(\mathbb{M},\widetilde{\mathbb{M}})$ be a Feller topological monoid. In this last section we obtain a general Lévy-Itô decomposition for subordinators on $(\mathbb{M},\widetilde{\mathbb{M}})$. This decomposition unifies and extends previous results (see Table \ref{tab:LevyKhintandItoInLit}). We will denote as $\text{d}t$ the Lebesgue measure on $(0,\infty)$ and by $\mu\otimes\nu$ the product measure of two measures $\mu$ and $\nu$.

\begin{theorem}[Lévy-Itô Decomposition] Let $\mathcal{N}=\sum_i \delta_{(t_i,x_i)}$ be a Poisson point process on $(0,\infty)\times \mathbb{M}$ with intensity measure $\text{d}t\otimes\Pi$ where $\Pi$ is a $\sigma$-finite measure. Then, for every $t\geq0$,
\begin{align}\label{LevyItoDec}
\mathbb{E}\left[\chi\left(\bigoplus_{t_i\leq t} x_i\right)\right]=\exp\left\{-t\int_{\mathbb{M}}\big(1-\chi(x)\big)\,\Pi(\text{d}x)\right\},\qquad \chi\in\widetilde{\mathbb{M}}.
\end{align}
\end{theorem}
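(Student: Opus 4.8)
The plan is to reduce this monoid-valued identity to a scalar computation by applying the character $\chi$ and exploiting its multiplicativity, and then to invoke the exponential (Campbell) formula for the Poisson point process $\mathcal{N}$. First I would settle the well-definedness of the random element $\bigoplus_{t_i\leq t}x_i$. Since $\Pi$ is $\sigma$-finite, the restriction of $\mathcal{N}$ to $(0,t]\times\mathbb{M}$ has $\sigma$-finite intensity, so almost surely it consists of countably many atoms $(t_i,x_i)$ with $t_i\leq t$. Enumerating the second coordinates as a sequence, Theorem \ref{proFellercone} guarantees that $\bigoplus_{t_i\leq t}x_i$ is a well-defined element of $\mathbb{M}^*$ (equal to $\partial_\infty$ precisely when the divergence criterion (i) holds, and an element of $\mathbb{M}$ when the convergence criterion (ii) holds), and that it does not depend on the chosen enumeration.

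Crucially, by part (iii) of Theorem \ref{proFellercone}, for every $\chi\in\widetilde{\mathbb{M}}$ one has
$$\chi\left(\bigoplus_{t_i\leq t}x_i\right)=\prod_{t_i\leq t}\chi(x_i),$$
with the convention $\chi(\partial_\infty)=0$. The right-hand side is a decreasing limit of finite products, hence a measurable functional of $\mathcal{N}$; and since $\sigma(\widetilde{\mathbb{M}})=\mathcal{B}(\mathbb{M})$ by Proposition \ref{pro:SigMet}(i), this identity simultaneously shows that $\bigoplus_{t_i\leq t}x_i$ is a genuine $\mathbb{M}^*$-valued random element, so the left-hand side of \eqref{LevyItoDec} is meaningful.

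With this reduction in hand, I would compute the expectation of the product. Writing the intensity as $\text{d}s\otimes\Pi$ (renaming the time variable to avoid clash with the horizon $t$), define $u\colon(0,\infty)\times\mathbb{M}\to[0,1]$ by $u(s,x)=\chi(x)$ for $s\leq t$ and $u(s,x)=1$ for $s>t$. Then $\prod_i u(t_i,x_i)=\prod_{t_i\leq t}\chi(x_i)$, since the atoms with $t_i>t$ contribute factors equal to $1$. The multiplicative form of the exponential (Campbell) formula for a Poisson point process with $\sigma$-finite intensity yields
$$\mathbb{E}\left[\prod_i u(t_i,x_i)\right]=\exp\left\{-\int_{(0,\infty)\times\mathbb{M}}\big(1-u(s,x)\big)\,\text{d}s\,\Pi(\text{d}x)\right\}.$$
Since $1-u(s,x)=0$ for $s>t$ and $1-u(s,x)=1-\chi(x)$ for $s\leq t$, Tonelli's theorem gives
$$\int_{(0,\infty)\times\mathbb{M}}\big(1-u(s,x)\big)\,\text{d}s\,\Pi(\text{d}x)=\int_0^t\!\!\int_{\mathbb{M}}\big(1-\chi(x)\big)\,\Pi(\text{d}x)\,\text{d}s=t\int_{\mathbb{M}}\big(1-\chi(x)\big)\,\Pi(\text{d}x),$$
and combining the three displays produces \eqref{LevyItoDec}.

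The only genuinely delicate point is the first step: one must make sure the possibly infinite $\oplus$-sum is a bona fide $\mathbb{M}^*$-valued random element and that the character-product identity survives the passage to the limit, including the divergence case $\bigoplus_{t_i\leq t}x_i=\partial_\infty$. This is exactly where the convergence/divergence dichotomy of Theorem \ref{proFellercone}, together with the convention $\chi(\partial_\infty)=0$, does the work, and where $\sigma$-finiteness of $\Pi$ is needed to ensure countability of the atoms. Once that foundation is secured, the exponential formula is standard and the remaining computation is routine.
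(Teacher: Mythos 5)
Your proof is correct, but it takes a genuinely different route from the paper's. You reduce the identity to the multiplicative form of the exponential (Campbell) formula for a Poisson point process with $\sigma$-finite intensity, applied to $u(s,x)=\chi(x)\mathbf{1}_{\{s\le t\}}+\mathbf{1}_{\{s>t\}}$, after invoking Theorem \ref{proFellercone}\,(iii) to identify $\chi\big(\bigoplus_{t_i\le t}x_i\big)$ with the infinite product $\prod_{t_i\le t}\chi(x_i)$ (with $\chi(\partial_\infty)=0$ in the divergence case). The paper instead proves the formula by hand: for finite $\Pi$ it conditions on the ordered arrival times of the marked Poisson process, computes $\mathbb{E}\big[\prod_{i=1}^{n}\big(1-(1-\chi(x_i))\mathbf{1}_{\{t_i\le t\}}\big)\big]$ explicitly as a truncated exponential series plus a remainder $\Delta_n$ with $|\Delta_n|\le|\Pi(\mathbb{M})-\Psi(\chi)|^{n}t^{n}/n!$, and lets $n\to\infty$; it then passes to $\sigma$-finite $\Pi$ by restricting $\mathcal{N}$ to an exhausting sequence $K_n$ with $\Pi(K_n)<\infty$ and taking limits in the resulting identity. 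Your argument is shorter and treats the finite and $\sigma$-finite cases in one stroke, at the price of importing the exponential formula as a known black box; the paper's computation is longer but self-contained, using only the memoryless property of marked Poisson processes and elementary integration. Your preliminary discussion of the well-definedness and measurability of the possibly infinite $\oplus$-sum, via the convergence/divergence dichotomy of Theorem \ref{proFellercone} and the identity $\sigma(\widetilde{\mathbb{M}})=\mathcal{B}(\mathbb{M})$, is in fact more careful than the paper's, which asserts $\bigoplus_{t_i\le t}x_i=\lim_{n\to\infty}\bigoplus_{t_i\le t}x_i^{(n)}$ in the $\sigma$-finite step without further justification.
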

\begin{proof}
    Let us first prove that (\ref{LevyItoDec}) holds when $\Pi$ is a finite measure.  Define $\Psi(\chi)=\int_{\mathbb{M}}\big(1-\chi(x)\big)\,\Pi(\text{d}x)$. Since $\Pi$ is a finite measure on $\mathbb{M}$, we can write $\mathcal{N}=\sum_{i=1}^\infty\delta_{(t_i,x_i)}$.  Now we consider, for every $t>0$, the Laplace transform of $\bigoplus_{t_i\leq t,\;i=1,\cdots, n}x_i$. By the memoryless property of marked Poisson process \cite[Theorem 7.4]{MR3791470}, we get
\begin{align*}
    &\mathbb{E}\left[\chi\left(\bigoplus_{t_i\leq t, \,i=1,\cdots, n} x_i\right)\right]\\
    &=\mathbb{E}\left[\prod_{i=1}^n \Big(1-\big(1-\chi(x_i)\big)1_{\{t_i\leq t\}}\Big)\right]\\
    &=\int_{((0,\infty)\times\mathbb{M})^n}  \mathbf{1}_{\{t_1<\cdots<t_n\}}\prod_{i=1}^n \Big(1-\big(1-\chi(y_i)\big)1_{\{t_i\leq t\}}\Big) e^{-t_n\Pi(\mathbb{M})}\text{d}t_1\Pi(\text{d}y_1)\cdots\text{d}t_n\Pi(\text{d}y_n)\\
    &=\sum_{i=0}^{n-1}\int_{((0,\infty)\times\mathbb{M})^n} \mathbf{1}_{\{t_1<\cdots<t_{i}<t<t_{i+1}<\cdots<t_n\} }\prod_{j=1}^i\big( 1-(1-\chi(y_j))\big)e^{-t_n\Pi(\mathbb{M})}\text{d}t_1\Pi(\text{d}y_1)\cdots\text{d}t_n\Pi(\text{d}y_n)\\
    &\quad+\int_{((0,\infty)\times\mathbb{M})^n}  \mathbf{1}_{\{t_1<\cdots<t_n<t\}}\prod_{i=1}^{n} \Big(1-\big(1-\chi(y_i)\big)\Big) e^{-t_n\Pi(\mathbb{M})}\text{d}t_1\Pi(\text{d}y_1)\cdots\text{d}t_n\Pi(\text{d}y_n)\\
    &= \sum_{i=0}^{n-1}\frac{\big( \Pi(\mathbb{M})-\Psi(\chi) \big)^i}{\Pi(\mathbb{M})^{i-n}}\int_{(0,+\infty)^n} \mathbf{1}_{\{t_1<\cdots<t_{i}<t<t_{i+1}<\cdots<t_n\} }e^{-t_n\Pi(\mathbb{M})}\text{d}t_1\cdots\text{d}t_n\\
    &\quad+{\big(\Pi(\mathbb{M})-\Psi(\chi)\big)^n} \int_{(0,+\infty)^n} \mathbf{1}_{\{t_1<\cdots<t_n<t\} }e^{-t_n\Pi(\mathbb{M})}\text{d}t_1\cdots\text{d}t_n.
    \end{align*}
    The first term in the above line equals 
    \begin{align*}
    &\sum_{i=0}^{n-1}\frac{\big( \Pi(\mathbb{M})-\Psi(\chi) \big)^i}{\Pi(\mathbb{M})^{i-n}}\int_0^{t}\text{d}t_i\cdots \int_{0}^{t_2}\text{d}t_1\int_{t}^\infty\text{d}t_{i+1}\int_{t_{i+1}}^\infty\text{d}t_{i+2}\cdots\int_{t_{n-2}}^\infty \text{d}t_{n-1} \int_{t_{n-1}}^\infty\text{d}t_ne^{-t_n\Pi(\mathbb{M})}\\
    &=\sum_{i=0}^{n-1}\big(\Pi(\mathbb{M})-\Psi(\chi)\big)^{i}\int_0^{t}\text{d}t_i\cdots \int_{0}^{t_2}\text{d}t_1\,  e^{-t\Pi(\mathbb{M})}\\
    &=e^{-t\Pi(\mathbb{M})}\sum_{i=0}^{n-1} \frac{\Big(t\big(\Pi(\mathbb{M})-\Psi(\chi)\big)\Big)^i}{i!}.
\end{align*}
Therefore \begin{align}\label{LevyItoDecDelta}
    \mathbb{E}\left[\chi\left(\bigoplus_{t_i\leq t, \,i=1,\cdots, n} x_i\right)\right]=e^{-t\Pi(\mathbb{M})}\sum_{i=0}^{n-1} \frac{\Big(t\big(\Pi(\mathbb{M})-\Psi(\chi)\big)\Big)^i}{i!}+\Delta_n,\quad n\geq1,
\end{align}
where $\Delta_n:={\big(\Pi(\mathbb{M})-\Psi(\chi)\big)^n} \int 1_{\{t_1<\cdots<t_n<t\} }e^{-t_n\Pi(\mathbb{M})}\text{d}t_1\cdots\text{d}t_n$. Since $\Pi$ is a finite measure on $\mathbb{M}$, it follows that the (random) set $\{x_i\in\mathbb{M}: t_i\leq t\}$ is a.s. finite for every $t>0$, and therefore
\begin{align}\label{Limchin}
    \lim_{n\to\infty} \mathbb{E}\left[\chi\left(\bigoplus_{t_i\leq t, \,i=1,\cdots, n} x_i\right)\right]=\mathbb{E}\left[\chi\left(\bigoplus_{t_i\leq t} x_i\right)\right].
\end{align}
Letting $n$ tend to infinity in (\ref{LevyItoDecDelta}) and using, on one hand that $|\Delta_n|\leq |\Pi(\mathbb{M})-\Psi(\chi)\big|^n t^n/n!$ for every $n\geq1$, on the other hand (\ref{Limchin}), we get
\begin{align*}
\mathbb{E}\left[\chi\left(\bigoplus_{t_i\leq t} x_i\right)\right]=\exp\{-t\Psi(\chi)\}=\exp\left\{-t\int_{\mathbb{M}}\big(1-\chi(x)\big)\,\Pi(\text{d}x)\right\},\qquad \chi\in\widetilde{\mathbb{M}},
\end{align*}
for $\Pi$ a finite measure on $\mathbb{M}$.

Let now $\Pi$ be a $\sigma$-finite measure on $\mathbb{M}$. We can then choose an increasing sequence $(K_n)_{n\geq1}$ of Borel sets of $\mathbb{M}$ such that $\cup_{n\geq1}K_n=\mathbb{M}$ and $\Pi(K_n)<\infty$ for $n\geq1$. Denote as $\mathcal{N}^{(n)}$ the Poisson point process    $\mathcal{N}$ restricted  to $(0,\infty)\times K_n\subseteq(0,\infty)\times\mathbb{M}$. By applying the result just proved to $\mathcal{N}^{(n)}$, we get
\begin{align}\label{ProtoLevyKhi}
\mathbb{E}\left[\chi\left(\bigoplus_{t_i\leq t} x_i^{(n)}\right)\right]=\exp\left\{-t\int_{K_n}\big(1-\chi(x)\big)\,\Pi(\text{d}x)\right\},\quad n\geq1,
\end{align}
where $\left\{x_i^{(n)}\right\}$ are the marks of $\mathcal{N}=\sum_i\delta_{(t_i,x_i)}$ that belong to $K_n$. On the other hand, since $\cup_{n\geq1}K_n=\mathbb{M}$, we have $
    \bigoplus_{t_i\leq t} x_i=\lim_{n\to\infty}\bigoplus_{t_i\leq t} x_i^{(n)}$
and moreover, $\lim_{n\to\infty}\chi\left(\bigoplus_{t_i\leq t} x_i^{(n)}\right)=\chi\left(\bigoplus_{t_i\leq t}x_i\right)$. Finally, letting $n$ tend to infinity in (\ref{ProtoLevyKhi}), we obtain the desired result.
\end{proof}

\begin{theorem}
Consider $\mathcal{N}=\sum_i \delta_{(t_i,x_i)}$ a Poisson point process on $(0,\infty)\times \mathbb{M}$ with intensity $\text{d}t\times\Pi$ with $\Pi$ a $\sigma$-finite measure such that $\int_{\mathbb{M}}\big(1-\chi(x)\big)\Pi(\text{d}x)<+\infty$ for every $\chi$. Then, the process defined by $X_t:=\bigoplus_{t_i\leq t}x_i$ is a subordinator in the Feller topological monoid $\mathbb{M}$.
\end{theorem}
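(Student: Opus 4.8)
The plan is to verify directly the three defining properties of Definition \ref{PDS} for the process $X_t=\bigoplus_{t_i\le t}x_i$, taking from the preceding theorem both the fact that $X_t$ is a well-defined $\mathbb{M}$-valued random element and the identity $\mathscr{L}_{X_t}(\chi)=\exp\{-t\Psi(\chi)\}$ with $\Psi(\chi):=\int_{\mathbb{M}}\big(1-\chi(x)\big)\Pi(\mathrm{d}x)$. Here the standing hypothesis $\Psi(\chi)<+\infty$ for every $\chi$ is exactly what forces $\mathscr{L}_{X_t}(\chi)>0$, so that by the Convergence Criterion (Theorem \ref{proFellercone}(ii)) the $\oplus$-series converges in $\mathbb{M}$ rather than escaping to $\partial_\infty$. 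The first property, $X_0=\boldsymbol{e}$, is immediate: since the $t_i$ lie in $(0,\infty)$ the index set $\{i:t_i\le0\}$ is empty, and the empty $\oplus$-sum is the neutral element.

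For the increment property (item 3), fix $s,t\ge0$ and split $\{i:t_i\le s+t\}$ into $\{i:t_i\le t\}$ and $\{i:t<t_i\le s+t\}$. By the partition property of $\oplus$-sums (Theorem \ref{proFellercone}(iv)),
\[
X_{s+t}=X_t\oplus Y,\qquad Y:=\bigoplus_{t<t_i\le s+t}x_i.
\]
The restrictions of $\mathcal{N}$ to the disjoint strips $(0,t]\times\mathbb{M}$ and $(t,s+t]\times\mathbb{M}$ are independent, so $X_t$ and $Y$ are independent; and since the intensity $\mathrm{d}t\otimes\Pi$ is invariant under time-translation, the restriction to $(t,s+t]\times\mathbb{M}$ has the same law as the restriction to $(0,s]\times\mathbb{M}$, whence $Y\overset{d}{=}X_s$. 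Setting $X_s^{(t)}:=Y$ and using commutativity of $\oplus$ gives $X_{s+t}=X_s^{(t)}\oplus X_t$ with $X_s^{(t)}\overset{d}{=}X_s$ independent of $X_t$, as required.

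The main obstacle is right-continuity (item 2), since in the infinite-activity case every time interval may carry infinitely many atoms. I would argue one character at a time, using $\chi(X_t)=\prod_{t_i\le t}\chi(x_i)$ from Theorem \ref{proFellercone}(iii). By Campbell's formula $\mathbb{E}\big[\sum_{t_i\le T}(1-\chi(x_i))\big]=T\Psi(\chi)<+\infty$, so on a full-probability event $\sum_{t_i\le T}(1-\chi(x_i))<+\infty$ for every rational $T$; on this event only finitely many atoms in $(0,T]$ have $\chi(x_i)$ bounded away from $1$, so the jump measure $\mu_\chi:=\sum_i(-\log\chi(x_i))\,\delta_{t_i}$ is finite on $(0,T]$ apart from the finitely many atoms where $\chi$ vanishes. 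For $t>t_0$ one has $\chi(X_t)=\chi(X_{t_0})\exp\{-\mu_\chi((t_0,t])\}$, and continuity from above of the finite measure $\mu_\chi$ yields $\mu_\chi((t_0,t])\downarrow0$ as $t\downarrow t_0$; the finitely many vanishing atoms only make $\chi(X_t)$ drop to $0$ and stay there, again right-continuously. Hence $t\mapsto\chi(X_t)$ is right-continuous for each fixed $\chi$.

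Intersecting over the countable family $\widetilde{\mathbb{M}}$ produces a single almost-sure event on which $t\mapsto\chi(X_t)$ is right-continuous for all $\chi\in\widetilde{\mathbb{M}}$ simultaneously. On this event, as $t\downarrow t_0$ we get $\chi_n(X_t)\to\chi_n(X_{t_0})$ for every $n$, so $\rho_{\widetilde{\mathbb{M}}}(X_t,X_{t_0})=\sum_{n\ge1}2^{-n}|\chi_n(X_t)-\chi_n(X_{t_0})|\to0$ by dominated convergence, giving right-continuity in $(\mathbb{M},\rho_{\widetilde{\mathbb{M}}})$; moreover, since the limiting functional $\chi\mapsto\chi(X_{t_0})$ is not identically zero by Lemma \ref{lem:property}(i), Definition \ref{FC}(ii) upgrades this to $X_t\to X_{t_0}$ in the original topology $\tau$. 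This establishes right-continuity and completes the verification that $(X_t)_{t\ge0}$ is a subordinator. I expect the handling of the atoms at which $\chi$ vanishes, together with the accumulation of infinitely many small atoms, to be the only genuinely delicate point; both are controlled by the single integrability hypothesis $\Psi(\chi)<+\infty$.
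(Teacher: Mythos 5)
Your proof is correct and follows the paper's route for everything except right-continuity, which is the heart of the matter; there you take a genuinely different path. For the increment property both you and the paper use the identical decomposition $X_{s+t}=X_t\oplus\bigl(\bigoplus_{t<t_i\le s+t}x_i\bigr)$ together with independence and stationarity of the restrictions of $\mathcal{N}$ to disjoint time strips. For right-continuity the paper reuses the martingale $\chi(X_t)e^{t\Psi(\chi)}$ and invokes the regularization theorem to obtain a c\`adl\`ag modification of $t\mapsto\chi(X_t)$, then passes through Definition \ref{FC}(ii). Your argument is instead purely pathwise: Campbell's formula gives $\mathbb{E}\bigl[\sum_{t_i\le T}(1-\chi(x_i))\bigr]=T\Psi(\chi)<\infty$, hence an almost-sure event on which, for each $\chi$, only finitely many atoms have $\chi(x_i)$ bounded away from $1$, the associated jump measure is finite away from the finitely many $\chi$-vanishing atoms, and right-continuity of $t\mapsto\chi(X_t)$ follows from continuity from above of a finite measure. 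This buys something real: you prove right-continuity of the process as actually constructed rather than of a modification (the paper's step from ``has a c\`adl\`ag modification'' to ``$\lim_{t\downarrow t_0}\chi(X_t)=\chi(X_{t_0})$'' silently identifies the two, and also presupposes filtration regularity for the regularization theorem), at the cost of a slightly longer argument. The one point both treatments leave implicit is that $X_{t_0}$ lies in $\mathbb{M}$ rather than being $\partial_\infty$ --- you need this to invoke Lemma \ref{lem:property}(i) and Definition \ref{FC}(ii), and the hypothesis $\Psi(\chi)<\infty$ only gives $\mathbb{E}[\chi(X_{t_0})]>0$, not $\chi(X_{t_0})>0$ almost surely for some $\chi$; since the paper glosses over the same issue, this is not a defect specific to your write-up, but it would be worth a sentence.
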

\begin{proof}
Let us observe that, for $s,t\geq0$,
\begin{align*}
X_{s+t}=\left(\bigoplus_{t_i\leq t}x_i\right)\oplus\left(\bigoplus_{t<t_i\leq s+t}x_i\right)=:X_t\oplus X_s^{(t)},
\end{align*}
 $X_t$ is independent of $X^{(t)}_s$, and $X_s\overset{d}{=}X_s^{(t)}$. Recall that the process $Y_t:=\chi(X_t)\exp\{t\Psi(\chi)\},\; t\geq0$, is a martingale for every $\chi\in\widetilde{\mathbb{M}}$. By the regularization theorem, for every $\chi\in\widetilde{\mathbb{M}}$, $t\mapsto\chi(X_t)$ has a càdlàg modification. Then $\lim_{t\downarrow t_0}\chi(X_t)=\chi(X_{t_0})$ for every $\chi\in\widetilde{\mathbb{M}}$. This implies $\lim_{t\downarrow t_0} X_{t}=X_{t_0}$ since $\mathbb{M}$ is a Feller topological monoid.
\end{proof}

\begin{appendices}
\section{Functional Monotones Class Theorem}
\begin{theorem}[Monotone Class Theorem]\label{MCT}
Let $\mathscr{H}$ be a vector space of bounded real-valued functions such that
\begin{enumerate}
    \item[i)] the constant functions are in $\mathscr{H}$,
    \item[ii)] if $\{h_n\}$ is an increasing sequence of positive elements of $\mathscr{H}$ such that $h=\sup_{n}h_n$ is bounded, then $h\in\mathscr{H}$.
\end{enumerate}
If $\mathscr{C}$ is a subset of $\mathscr{H}$ which is stable under pointwise multiplication, then $\mathscr{H}$ contains all the bounded $\sigma(\mathscr{C})$-mesurable functions.
\end{theorem}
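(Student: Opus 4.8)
The plan is to reduce the statement to the single assertion that $\mathbf{1}_A\in\mathscr{H}$ for every $A\in\sigma(\mathscr{C})$, and then recover arbitrary bounded $\sigma(\mathscr{C})$-measurable functions by approximation. Once indicators of $\sigma(\mathscr{C})$-sets are known to lie in $\mathscr{H}$, simple functions built from such sets lie in $\mathscr{H}$ because $\mathscr{H}$ is a vector space; writing a bounded measurable $f$ as $f^+-f^-$ and approximating each part from below by an increasing sequence of nonnegative simple functions, hypothesis (ii) then yields $f^+,f^-\in\mathscr{H}$, hence $f\in\mathscr{H}$.

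First I would replace $\mathscr{C}$ by the algebra $\mathscr{A}$ spanned by $\mathscr{C}$ together with the constants; since $\mathscr{C}$ is stable under pointwise multiplication and the constants lie in $\mathscr{H}$, this $\mathscr{A}$ is an algebra of bounded functions with $\mathscr{A}\subseteq\mathscr{H}$ and $\sigma(\mathscr{A})=\sigma(\mathscr{C})$. Next I would check that $\mathscr{H}$ is closed under uniform limits: given $f_n\to f$ uniformly with $f_n\in\mathscr{H}$, pass to a subsequence with $\|f_{n_k}-f\|\le 2^{-k}$ and set $h_k:=f_{n_k}-2^{-k+2}$; then $(h_k)$ is uniformly bounded, increasing (using $f_{n_{k+1}}-f_{n_k}\ge -2^{-k+1}$) and $h_k\uparrow f$, so $f\in\mathscr{H}$ by (ii). Consequently the uniform closure $\overline{\mathscr{A}}$ also lies in $\mathscr{H}$.

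The key step is to promote $\overline{\mathscr{A}}$ to a vector lattice. Since $\overline{\mathscr{A}}$ is a uniformly closed algebra containing the constants, Weierstrass approximation of $x\mapsto|x|$ on a compact interval by polynomials shows $|g|\in\overline{\mathscr{A}}$ whenever $g\in\overline{\mathscr{A}}$, so $\overline{\mathscr{A}}$ is stable under $\vee$ and $\wedge$. For $g\in\mathscr{C}$ and $a\in\mathbb{R}$, the functions $\bigl(n(g-a)^+\bigr)\wedge 1$ then lie in $\overline{\mathscr{A}}$ and increase to $\mathbf{1}_{\{g>a\}}$, whence $\mathbf{1}_{\{g>a\}}\in\mathscr{H}$ by (ii). Setting $\mathscr{L}:=\{A:\mathbf{1}_A\in\mathscr{H}\}$, I would observe that $\mathscr{L}$ is a $\lambda$-system: it contains the whole space since $1\in\mathscr{H}$, it is closed under complements because $\mathbf{1}_{A^c}=1-\mathbf{1}_A$, and it is closed under increasing countable unions because $\mathbf{1}_{\cup_n A_n}=\sup_n\mathbf{1}_{A_n}$. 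Moreover, for any finite family $g_1,\dots,g_k\in\mathscr{C}$ the products $\prod_{i}\bigl[\bigl(n(g_i-a_i)^+\bigr)\wedge 1\bigr]$ lie in $\overline{\mathscr{A}}$ and, being nonnegative, increase to $\prod_i\mathbf{1}_{\{g_i>a_i\}}=\mathbf{1}_{\cap_i\{g_i>a_i\}}$, so the $\pi$-system of such finite intersections is contained in $\mathscr{L}$. As this $\pi$-system generates $\sigma(\mathscr{C})$, Dynkin's $\pi$-$\lambda$ theorem gives $\sigma(\mathscr{C})\subseteq\mathscr{L}$, completing the reduction described above.

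I expect the main obstacle to be the passage from a merely multiplicative system to a lattice: the naive attempt to show directly that the monotone-limit closure of $\mathscr{A}$ is itself an algebra breaks down, since products of increasing sequences need not be monotone when the factors change sign. Routing through the uniform closure and the Weierstrass approximation of $|\cdot|$ circumvents this, and the nonnegativity of the truncated factors $\bigl(n(g-a)^+\bigr)\wedge 1$ is precisely what restores monotonicity of the products in the $\pi$-system step.
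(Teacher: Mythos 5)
The paper states this theorem in its appendix without proof, treating it as a standard quoted result, so there is no in-house argument to compare against; your proof is the classical textbook one (uniform closure of the algebra generated by $\mathscr{C}$ and the constants, Weierstrass approximation of $|x|$ to upgrade that closure to a lattice, the truncations $\bigl(n(g-a)^+\bigr)\wedge 1$ to reach indicators, then Dynkin's $\pi$--$\lambda$ theorem and simple-function approximation), and it is essentially correct. One step needs a one-line repair: hypothesis (ii) applies only to increasing sequences of \emph{positive} elements of $\mathscr{H}$, whereas the functions $h_k=f_{n_k}-2^{-k+2}$ in your uniform-limit argument need not be positive. Since the $h_k$ are uniformly bounded, replace them by $h_k+C$ for a sufficiently large constant $C$ (constants lie in $\mathscr{H}$ and $\mathscr{H}$ is a vector space); then (ii) yields $f+C\in\mathscr{H}$ and hence $f\in\mathscr{H}$. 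With that adjustment the remaining steps all check out: the algebra $\mathscr{A}$ lies in $\mathscr{H}$ because finite products of elements of $\mathscr{C}$ stay in $\mathscr{C}$; the class $\{A:\mathbf{1}_A\in\mathscr{H}\}$ is a genuine $\lambda$-system (it is even closed under proper differences via $\mathbf{1}_{B\setminus A}=\mathbf{1}_B-\mathbf{1}_A$, which is what Dynkin's theorem actually requires, rather than just complements and increasing unions); the products of the nonnegative truncations are increasing in $n$, so (ii) applies there; and the finite intersections $\bigcap_i\{g_i>a_i\}$ do form a $\pi$-system generating $\sigma(\mathscr{C})$. Your closing remark about why one must route through the uniform closure rather than the monotone closure of the algebra correctly identifies the only delicate point of the standard argument.
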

\end{appendices}

\bibliographystyle{amsplain}
\bibliography{references} 

\providecommand{\bysame}{\leavevmode\hbox to3em{\hrulefill}\thinspace}
\providecommand{\MR}{\relax\ifhmode\unskip\space\fi MR }
\providecommand{\MRhref}[2]{%
  \href{http://www.ams.org/mathscinet-getitem?mr=#1}{#2}
}
\providecommand{\href}[2]{#2}
\begin{thebibliography}{10}

\bibitem{MR2353272}
David Applebaum, \emph{L\'{e}vy processes and stochastic integrals in {B}anach
  spaces}, Probab. Math. Statist. \textbf{27} (2007), no.~1, 75--88.
  \MR{2353272}

\bibitem{MR3967735}
\bysame, \emph{Semigroups of linear operators}, London Mathematical Society
  Student Texts, vol.~93, Cambridge University Press, Cambridge, 2019, With
  applications to analysis, probability and physics. \MR{3967735}

\bibitem{MR1406564}
Jean Bertoin, \emph{L\'{e}vy processes}, Cambridge Tracts in Mathematics, vol.
  121, Cambridge University Press, Cambridge, 1996. \MR{1406564}

\bibitem{MR1746300}
\bysame, \emph{Subordinators: examples and applications}, Lectures on
  probability theory and statistics ({S}aint-{F}lour, 1997), Lecture Notes in
  Math., vol. 1717, Springer, Berlin, 1999, pp.~1--91. \MR{1746300}

\bibitem{MR2178044}
Jean Bertoin and Marc Yor, \emph{Exponential functionals of {L}\'{e}vy
  processes}, Probab. Surv. \textbf{2} (2005), 191--212. \MR{2178044}

\bibitem{MR2673979}
Douglas Blount and Michael~A. Kouritzin, \emph{On convergence determining and
  separating classes of functions}, Stochastic Process. Appl. \textbf{120}
  (2010), no.~10, 1898--1907. \MR{2673979}

\bibitem{MR2098271}
Nicolas Bourbaki, \emph{Integration. {II}. {C}hapters 7--9}, Elements of
  Mathematics (Berlin), Springer-Verlag, Berlin, 2004, Translated from the 1963
  and 1969 French originals by Sterling K. Berberian. \MR{2098271}

\bibitem{MR2743117}
G\"{o}ran H\"{o}gn\"{a}s and Arunava Mukherjea, \emph{Probability measures on
  semigroups}, second ed., Probability and its Applications (New York),
  Springer, New York, 2011, Convolution products, random walks, and random
  matrices. \MR{2743117}

\bibitem{MR1625865}
Johan Jonasson, \emph{Infinite divisibility of random objects in locally
  compact positive convex cones}, J. Multivariate Anal. \textbf{65} (1998),
  no.~2, 129--138. \MR{1625865}

\bibitem{MR4226142}
Olav Kallenberg, \emph{Foundations of modern probability}, Probability Theory
  and Stochastic Modelling, vol.~99, Springer, Cham, [2021] \copyright 2021,
  Third edition [of 1464694]. \MR{4226142}

\bibitem{MR3791470}
G\"{u}nter Last and Mathew Penrose, \emph{Lectures on the {P}oisson process},
  Institute of Mathematical Statistics Textbooks, vol.~7, Cambridge University
  Press, Cambridge, 2018. \MR{3791470}

\bibitem{MR2760602}
Zenghu Li, \emph{Measure-valued branching {M}arkov processes}, Probability and
  its Applications (New York), Springer, Heidelberg, 2011. \MR{2760602}

\bibitem{MR2060091}
Ming Liao, \emph{L\'{e}vy processes in {L}ie groups}, Cambridge Tracts in
  Mathematics, vol. 162, Cambridge University Press, Cambridge, 2004.
  \MR{2060091}

\bibitem{MR2245374}
V\'{\i}ctor P\'{e}rez-Abreu and Alfonso Rocha-Arteaga, \emph{On the
  {L}\'{e}vy-{K}hintchine representation of {L}\'{e}vy processes in cones of
  {B}anach spaces}, Publ. Mat. Urug. \textbf{11} (2006), 41--55. \MR{2245374}

\bibitem{MR656509}
Jim Pitman and Marc Yor, \emph{A decomposition of {B}essel bridges}, Z.
  Wahrsch. Verw. Gebiete \textbf{59} (1982), no.~4, 425--457. \MR{656509}

\bibitem{MR1303927}
Sidney Resnick and Rishin Roy, \emph{Super-extremal processes and the argmax
  process}, J. Appl. Probab. \textbf{31} (1994), no.~4, 958--978. \MR{1303927}

\bibitem{MR2364939}
Sidney~I. Resnick, \emph{Extreme values, regular variation and point
  processes}, Springer Series in Operations Research and Financial Engineering,
  Springer, New York, 2008, Reprint of the 1987 original. \MR{2364939}

\bibitem{MR1725357}
Daniel Revuz and Marc Yor, \emph{Continuous martingales and {B}rownian motion},
  third ed., Grundlehren der mathematischen Wissenschaften [Fundamental
  Principles of Mathematical Sciences], vol. 293, Springer-Verlag, Berlin,
  1999. \MR{1725357}

\bibitem{MR2264365}
Alfonso Rocha-Arteaga, \emph{Subordinators in a class of {B}anach spaces},
  Random Oper. Stochastic Equations \textbf{14} (2006), no.~3, 245--258.
  \MR{2264365}

\bibitem{MR2680403}
Alain-Sol Sznitman, \emph{Vacant set of random interlacements and percolation},
  Ann. of Math. (2) \textbf{171} (2010), no.~3, 2039--2087. \MR{2680403}

\end{thebibliography}

\noindent (UPC) Instituto de Matemáticas, Universidad Nacional Autónoma de México (UNAM), México. \\  Email:
\texttt{ulisesperez@ciencias.unam.mx}

\noindent (GPS) Centro de Investigación en Matemáticas (CIMAT), México.\\ 
Email: \texttt{gerardo.perez@cimat.mx}

\end{document}